\newtheorem{thm}{Theorem}[section]
\newtheorem{prop}[thm]{Proposition}
\newtheorem{lem}[thm]{Lemma}
\newtheorem{cor}[thm]{Corollary}
\renewcommand{\le}{\leqslant}
\renewcommand{\ge}{\geqslant}
\newcommand{\E}{\mathbb{E}}
\newcommand{\EE}{\mathbf{E}}
\newcommand{\EEo}{\mathbf{E}^\omega}
\newcommand{\cE}{\mathcal{E}}
\newcommand{\N}{\mathbb{N}}
\renewcommand{\L}{\mathcal{L}}
\newcommand{\1}{\mathbf{1}}
\newcommand{\R}{\mathbb{R}}
\newcommand{\Z}{\mathbb{Z}}
\renewcommand{\P}{\mathbb{P}}
\newcommand{\PP}{\mathbf{P}}
\newcommand{\PPo}{\mathbf{P}^\omega}
\newcommand{\ov}{\overline}
\newcommand{\td}{\tilde}
\newcommand{\eps}{\varepsilon}
\def\d{{\mathrm{d}}}
\newcommand{\cro}{[\omega,\sigma]}
\newcommand{\tri}{| \! | \! |}
\newcommand{\grad}{\nabla}
\newcommand{\var}{\mathbb{V}\mathrm{ar}}
\newcommand{\rN}{\mathrm{N}}
\newcommand{\mclN}{\mathcal{N}}
\title[Variance decay for the environment viewed by the particle]{Variance decay for functionals of the environment viewed by the particle}
\author{Jean-Christophe Mourrat}
\address{Universit\'e de Provence, CMI, 39 rue Joliot Curie, 13013 Marseille, France ; PUC de Chile, Facultad de Matem\'aticas, Vicu\~na Mackenna 4860, Macul, Santiago, Chile.}
\begin{document}
\begin{abstract}
For the random walk among random conductances, we prove that the environment viewed by the particle converges to equilibrium polynomially fast in the variance sense, our main hypothesis being that the conductances are bounded away from zero. The basis of our method is the establishment of a Nash inequality, followed either by a comparison with the simple random walk or by a more direct analysis based on a martingale decomposition. As an example of application, we show that under certain conditions, our results imply an estimate of the speed of convergence of the mean square displacement of the walk towards its limit.

\bigskip
\noindent \textsc{R\'esum\'e.} Pour la marche al\'eatoire en conductances al\'eatoires, nous montrons 
que l'environnement vu par la particule converge vers l'\'equilibre \`a une vitesse polynomiale au sens de la variance, notre hypoth\`ese principale \'etant que les conductances sont uniform\'ement minor\'ees. Notre m\'ethode se base sur l'\'etablissement d'une in\'egalit\'e de Nash, suivie soit d'une comparaison avec la marche al\'eatoire simple, soit d'une analyse plus directe fond\'ee sur une m\'ethode de martingale. Comme exemple d'application, nous montrons que sous certaines conditions, ces r\'esultats permettent d'estimer la vitesse de convergence vers sa limite du d\'eplacement quadratique moyen de la marche.

\bigskip
\noindent \textsc{MSC:} 60K37; 82C41; 35B27.

\bigskip
\noindent \textsc{Keywords:} Algebraic convergence to equilibrium, random walk in random environment, environment viewed by the particle, homogenization.

\end{abstract}
\maketitle
\section{Introduction}
When considering some large scale property of a heterogeneous environment, it is natural to expect that the local fluctuations average out, and that one can replace the irregular medium by an ``averaged'' one, described by a small number of effective parameters. This problem of homogenization of heterogeneous media is old, and can be traced back at least to \cite{maxwell} and \cite{rayleigh}. Mathematical results concerning the homogenization of periodic environments began to appear around 1970 (see for instance \cite[Chapter 1]{jko}, and references therein). The study of averaging of random environments started with the works of \cite{kozlov1}, \cite{yuri1}, and \cite{papavara1}, where stochastic homogenization was obtained for divergence form elliptic operators. These analytic results have their probabilistic counterpart, in terms of invariance principles for certain diffusions in random environment \cite{kun,osada}.

A central question follows any homogenization result~: when can one replace, up to some given precision, the heterogeneous medium by the averaged one~?

As discussed in \cite[p.~199-205]{molchanov}, and contrary to the periodic case (see for instance \cite[p.~151-152]{molchanov} and \cite[Section 2.6]{jko}), the typical space scale of the averaging of a random environment may be unexpectedly large. As a matter of fact, very little is known about this issue. A notable exception is \cite{yuri}, where the author considers the Poisson equation on a bounded domain of $\R^d$, for divergence form elliptic operators. It is shown that the solution corresponding to a typical length scale of order $\eps$ converges, as $\eps$ tends to $0$, to the solution of the averaged problem faster than some power of $\eps$, where the exponent depends on the dimension $d \ge 3$, the ellipticity constant and some mixing condition.

The related problem of finding an efficient way to compute the effective parameters of the averaged environment is also troublesome. Let us consider a random walk or a diffusion in random environment, and assuming it exists, let us write $D$ for the effective diffusion matrix. As the periodic case is better understood, it is natural to consider periodizations of the initial random  environment $\omega$. For periods of size $n$, this procedure defines an effective diffusion matrix $D_n(\omega)$, that might be a good approximation of $D$. Considering divergence form elliptic operators and using results from \cite{yuri}, it is shown in \cite{bourpia} that $D_n(\omega)$ converges to $D$ faster than $C n^{-\alpha}$, where $\alpha$ depends on the dimension $d \ge 3$, the ellipticity constant and some mixing condition. In the case of a random walk among random independent conductances, \cite{capuio} and \cite{boivin} have shown that, under an ellipticity condition, $D_n(\omega)$ converges to $D$ almost surely (but without quantitative estimates on this convergence), and provide estimates of the variance of $D_n(\omega)$ for fixed $n$.

Many proofs of homogenization results rely on the ergodicity of an auxiliary process introduced in \cite{papavara2}, that is now usually called \emph{the environment viewed by the particle} (see for instance \cite{osada}, \cite{kozlov2}, \cite{kipvar} or \cite{masi}). Naturally, the ergodic theorem gives only an asymptotic result. Our main purpose here is to provide, in the context of random walks among random conductances, an estimate of the speed of convergence to equilibrium of the environment viewed by the particle, our central assumption being that the conductances are bounded away from zero. We obtain a polynomial decay of the variance of a large class of functionals. Under the additional hypothesis that the conductances are also bounded from above and when $d \ge 7$, we can derive information on the rate of convergence of the mean square displacement of the walk towards its limit.

We would like to draw the reader's attention to the fact that, in the aforementioned papers, the proofs of algebraic speed of convergence rely on analytical tools such as Harnack's inequality. As a consequence, the exponent found in the polynomial decay is kept implicit, and depends on the ellipticity constant. In contrast, the exponent we find here is given explicitly in terms of the dimension only, and the polynomial decay holds for possibly non-elliptic environments.

We now define our present setting with more precision. Consider on $\Z^d$ ($d \ge 1$) the nearest neighbour relation : $x,y \in \Z^d$ are neighbours (written $x \sim y$) if and only if $\|x-y\|_2 = 1$. Drawing an (unoriented) edge between any two neighbours turns $\Z^d$ into a graph, and we write $\mathbb{B}^d$ for the set of edges. We define a \emph{random walk among random conductances} on $\Z^d$ as follows. 

Let $\Omega = (0,+\infty)^{\mathbb{B}^d}$ ; we call an element $\omega = (\omega_e)_{e \in \mathbb{B}^d} \in \Omega$ an \emph{environment}. If $e = (x,y) \in \mathbb{B}^d$, we may write $\omega_{x,y}$ instead of $\omega_e$. By construction, $\omega$ is symmetric~: $\omega_{x,y} = \omega_{y,x}$.

For any $\omega \in \Omega$, we consider the Markov process $(X_t)_{t \ge 0}$ with jump rate between $x$ and $y$ given by $\omega_{x,y}$. We write $\PPo_x$ for the law of this process starting from $x \in \Z^d$, $\EEo_x$ for its associated expectation.

The environment $\omega$ will be itself a random variable, whose law we write $\P$ (and $\E$ for the corresponding expectation). There are translation operators $(\theta_x)_{x \in \Z^d}$ acting on $\Omega$, given by $(\theta_x \ \omega)_{y,z} = \omega_{x+y,x+z}$. We assume the measure $\P$ to be invariant under these translations. Moreover, we assume, without further mention, that the random walk is well defined for all times, or in other words, that it does not travel along its trajectory in finite time. This assumption is satisfied whenever one can find a threshold such that the set of conductances above this threshold does not percolate. In particular, it holds if $\P$ is a product measure.

We define the averaged (or \emph{annealed}) law as
$$
\ov{\P}[\cdot] = \P\big[\PPo_0[\cdot]\big],
$$
and $\ov{\E} = \E.\EEo_0$ for the corresponding expectation. 

Assuming that the conductances are integrable, and that $\P$ is ergodic (with respect to $(\theta_x)_{x \in \Z^d}$), \cite{masi} have shown that the process $(\eps X_{\eps^{-2} t})_{t \ge 0}$ converges to a Brown\-ian motion  under the annealed law, as $\eps$ goes to $0$. The proof of this result uses the ergodicity of \emph{the environment viewed by the particle}. This process is defined by $\omega(t) = \theta_{X_t} \ \omega$. It is a Markov process for which the measure $\P$ is reversible, and whose infinitesimal generator is given by
$$
\L f (\omega) = \sum_{|z| = 1} \omega_{0,z} (f(\theta_z \ \omega) - f(\omega)).
$$

The aim of this note is to provide a quantitative information about the speed of convergence of $(\omega(t))$ towards equilibrium, our central assumption being that for any $e \in \mathbb{B}^d$, $\omega_e \ge 1$. More precisely, let us define $f_t(\omega) = \EEo_0[f(\omega(t))]$. We would like to find appropriate functional $V$ and exponent $\alpha > 0$ such that, for any function $f : \Omega \to \R$ satisfying $\E[f] = 0$~:
\begin{equation}
\label{obj}
\E[(f_t)^2] \le \frac{V(f)}{t^{\alpha}},
\end{equation}
with $V(f)$ finite at least for functions that are bounded and depend only on a finite number of coordinates.

A first attempt to solve the problem could be to decompose $\EEo_0[f(\omega(t))]$ into
$$
\sum_{x \in \Z^d} f(\theta_x \ \omega) \PPo_0[X_t = x],
$$
and look for Gaussian bounds on the transition probabilities. Although this can be an efficient strategy when the random walk is not influenced by the environment (and indicates that $\alpha$ should be equal to $d/2$), it seems bound to fail in our context, as the random variables $f(\theta_x \ \omega)$ and $\PPo_0[X_t = x]$ are correlated in a rather problematic way.

In the not so distant context of interacting particle systems, first results concerning polynomial convergence to equilibrium were obtained in \cite{ligg,deuschel}. We will here rely on the general technique proposed in \cite[Theorem 2.2]{ligg}, where it is shown that, under some assumption on the functional $V$ involved in (\ref{obj}), the polynomial convergence to equilibrium of the process is equivalent to a certain Nash inequality.

In our context, we derive a Nash inequality from the knowledge of the spectral gap for the dynamics restricted to a finite box, in a way that is similar to the one contained in \cite{bert-zeg} (section~\ref{s:specgap}). 

An important issue is the choice of the functional $V$ in equation~(\ref{obj}). As one can see in \cite[Theorem 2.2]{ligg}, a desirable feature for $V$ is to be contractive under the action of the semi-group. For the case of interacting particle systems, this contractivity property is usually obtained using some monotonicity of the model considered. In our present context, there is no such property at our disposal, and we are left with a functional that turns out not to be contractive.

We propose two different approaches to overcome this difficulty. The first is to consider instead the particular case when the law of the random walk does not depend on the environment (section~\ref{s:simpleRW}). Here, the contractivity property holds, and one thus easily obtains algebraic convergence to equilibrium, with exponent $\alpha = d/2$. Using a comparison of resolvents between the simple random walk and the random walk in random environment (section~\ref{s:resolvents}), one can partially transfer this result back to the original random walk among random conductances, obtaining (if one forgets about logarithmic corrections) an exponent $\alpha = \min(d/2,1)$.

This exponent is rather unsatisfactory, especially when the dimension is large. We therefore provide a second method in section~\ref{s:martmeth} to circumvent the absence of contractivity, based on a martingale method. This enables us to obtain an algebraic decay of the form~(\ref{obj}) with a new functional and the exponent $\alpha = d/2 - 2$. This improves the previous result as soon as $d \ge 7$, although it requires a more restrictive condition on $f$, implying in particular that it is bounded. 

One might argue that systems of particles and random walks in random environments are similar problems. Indeed, they look close to one another, since a tagged particle in a system of interacting particles can be seen as a random walk in a random environment. However, if one considers the environment seen by this tagged particle, it constantly changes with time, even when the particle does not move. On the other hand, the environment seen by the random walk among random conductances is static, and evolves only via translations, making the convergence to equilibrium more difficult.

We now turn our attention to the consequences of our results. Observe that, as $\P$ is reversible for $(\omega(t))_{t \ge 0}$, the associated semi-group is self-adjoint in $L^2(\P)$. Hence, it comes that
\begin{equation}
\label{correlations}
\E[(f_t)^2] = \E[f(\omega) f_{2t}(\omega)] = \ov{\E}[f(\omega(0))f(\omega(2t))] .
\end{equation}
As a consequence, our problem is in fact equivalent to a control of the decay of the correlations of $(\omega(t))_{t \ge 0}$. It is shown in \cite{kipvar} that, provided these correlations are integrable, an annealed invariance principle holds for 
\begin{equation}
\label{defZt}
Z_t = \int_0^t f(\omega(s)) \d s .
\end{equation}
In fact, if the correlations in (\ref{correlations}) decay faster than $t^{-\alpha}$ for some $\alpha > 1$, we will see that one can control the speed of convergence of $\ov{\E}[(Z_t)^2]/t$ towards its limit. This kind of result can be seen as a (rather weak) quantitative annealed central limit theorem. As an example, when $d \ge 7$ and the conductances are bounded from above, we can estimate the speed of convergence of the mean square displacement $\ov{\E}[(\|X_t\|_2)^2]/t$ towards its limit.

As noted before, because of its relative simplicity and of the possibility to relate it with our initial problem, we will also consider the simple random walk $X^\circ_t$ (for which the jump rates are uniformly equal to $1$), together with $\omega^\circ(t) = \theta_{X^\circ_t} \ \omega$. This last process is also reversible with respect to $\P$, and has for infinitesimal generator
$$
\L^\circ f (\omega) = \sum_{|z| = 1} (f(\theta_z \ \omega) - f(\omega)).
$$
We will write $f^\circ_t(\omega)$ for $\EEo_0[f(\omega^\circ(t))]$.

\noindent \textbf{Addendum.} A referee pointed out to us the references \cite{cudna, gloria1, gloria2}, that we were not previously aware of. In these papers, the problem of estimating the effective diffusion matrix $D$ is investigated. This matrix can be expressed in terms of a so-called ``corrector field'', that solves a Poisson equation. Following \cite{yuri}, \cite{cudna, gloria1,gloria2} propose to approach this corrector by the practically computable solution of a regularized Poisson equation. In \cite{cudna}, the authors consider the case when the conductances can take only two different values, and give an explicit bound between the diffusion matrix and its approximation. Most interestingly, in \cite{gloria1, gloria2}, the authors tackle the problem of general bounded conductances. They decompose the difference between the matrix $D$ and its approximation into the sum of two error terms, one being due to the discrepancy between $D$ and the expected value of the approximation, and the other to the random fluctuations of the approximation. They provide the precise asymptotic behaviour of this second term in \cite{gloria1}, and of the first one in \cite{gloria2}. There is an interesting interplay between their approach and ours, which we will describe in the last section of this paper. %In particular, we will see how results from \cite{gloria2} enable to obtain a better exponent of convergence to equilibrium for the function $\mathfrak{d}$ defined in (\ref{deflocaldrift}) when $d \le 9$, and also to improve our Corollary~\ref{tclcor} when $d \le 8$.

%
%
%
%%%%%%%%%%%%%%%%%%%%%%%%%%%%%%%%%%%%%%%%%%%%%%%%%%%%%%%%%%%%%%
%%%%%%%%%%%%%%%%%%%     Main results     %%%%%%%%%%%%%%%%%%%%%
%%%%%%%%%%%%%%%%%%%%%%%%%%%%%%%%%%%%%%%%%%%%%%%%%%%%%%%%%%%%%%
%
%
%
\section{Statement of the main results}
\label{s:statement}
\setcounter{equation}{0}
From now on, we always assume that the conductances are uniformly bounded from below by $1$ : for any $e \in \mathbb{B}^d$, we impose that $\omega_e \ge 1$.
We define the following possible additional hypothesis on the regularity of $\P$~:

\noindent \textbf{Assumption $(\mathfrak{I})$ :} The random variables $(\omega_e)_{e \in \mathbb{B}^d}$ are independent and identically distributed.

\noindent \textbf{Assumption $(\mathfrak{A})$ :} Assumption $(\mathfrak{I})$ is satisfied, and the conductances are uniformly bounded from above.

To state our main results, we need to fix some notations. We write $B_n$ for $\{-n,\ldots,n\}^d$. For a function $f : \Omega \to \R$, let
\begin{equation}
\label{defSn}
S_n(f) = \sum_{x \in B_n} f(\theta_x \ \omega),
\end{equation}
and
\begin{equation}
\label{defmclN}
\mathcal{N}(f) = \sup_{n \in \N} \frac{1}{|B_n|} \E\left[\left(S_n(f) \right)^2 \right].
\end{equation}
Note that, using the translation invariance of $\P$, we have for $f \in L^2(\P)$~:
\begin{equation*}
\frac{1}{|B_n|} \E\left[\left(S_n(f) \right)^2 \right] = \frac{1}{|B_n|} \sum_{x,y \in B_n} \E[f(\theta_x \ \omega) f(\theta_y \ \omega)]  \le  \sum_{x \in \Z^d} |\E[f(\omega) f(\theta_x \ \omega)]| .
\end{equation*}
In particular, $\mathcal{N}(f)$ is finite under assumption ($\mathfrak{I}$) if $f \in L^2(\P)$ satisfies $\E[f] = 0$ and depends only on a finite number of coordinates. As a rule of thumb, the reader is advised to remember that the fact that $\mathcal{N}(f)$ is finite should imply that $\E[f] = 0$ (and our results, together with Birkhoff's ergodic theorem, show that it is indeed the case).

For each edge $e \in \mathbb{B}^d$, let~:
\begin{equation}
\label{def:grad}
| \grad f | (e) = \sup | f(\omega) - f(\omega')|,
\end{equation}
where the sup is taken over all $\omega, \omega'$ in the support of $\P$ such that $\omega = \omega'$ except possibly on $e$. We define the semi-norm~:
\begin{equation}
\label{def:tri}
\tri f \tri = \sum_{e \in \mathbb{B}^d} | \grad f | (e),
\end{equation}
and $\rN(f) = \tri f \tri^2 + \|f\|_\infty^2$. For example, $\rN(f)$ is finite if $f$ is a bounded function that depends only on a finite number of coordinates.
We write $\ln_+(t)$ for $\max(1,\ln(t))$.

We first obtain a decay of the variance of $f^\circ_t$ for certain functions $f$.
\begin{thm}
\label{main:indep}
There exists $C > 0$ such that for any $f : \Omega \to \R$, if $\mathcal{N}(f)$ is finite, then for any $t > 0$, we have~:
$$
\E[(f^\circ_t)^2] \le C \ \frac{\mclN(f)}{t^{d/2}}.
$$
\end{thm}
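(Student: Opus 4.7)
The plan is to combine a Nash-type inequality for the Dirichlet form $\cE^\circ$ with the contractivity of $\mclN$ under the simple-random-walk semigroup, and then invoke the Liggett-type Nash $\Leftrightarrow$ polynomial-decay equivalence from \cite{ligg}.

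\emph{Step 1: Nash inequality.} I would first establish, via the finite-volume spectral gap as in \cite{bert-zeg} (announced for section~\ref{s:specgap}), an inequality of the form
\begin{equation*}
\E\left[\left(f - \frac{S_n(f)}{|B_n|}\right)^2\right] \le C\,n^2\,\cE^\circ(f,f)
\end{equation*}
for every $n$. Here I use the quenched Poincar\'e inequality on the discrete torus of side $2n+1$ (whose spectral gap is of order $n^{-2}$ for the simple random walk, independent of $\omega$) and take the $\P$-expectation, exploiting translation invariance so that the right-hand side collapses to $C n^2 \cE^\circ(f,f)$. Next, expanding $\E[(f - S_n(f)/|B_n|)^2]$, applying Cauchy--Schwarz to the cross term $\E[f S_n(f)]/|B_n|$, and using $\E[S_n(f)^2] \le |B_n| \mclN(f)$ yields, after absorbing a fraction of $\E[f^2]$ in the usual way,
\begin{equation*}
\E[f^2] \le C\left( n^2 \cE^\circ(f,f) + \frac{\mclN(f)}{|B_n|} \right).
\end{equation*}
Optimizing over $n$ balances $n^2 \cE^\circ \sim \mclN/n^d$, i.e.\ $n^{d+2} \sim \mclN/\cE^\circ$, and delivers the Nash inequality
\begin{equation*}
\E[f^2]^{1+2/d} \le C\,\cE^\circ(f,f)\, \mclN(f)^{2/d}.
\end{equation*}

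\emph{Step 2: contractivity of $\mclN$ under $P^\circ_t$.} Since the simple random walk is independent of $\omega$, one has $P^\circ_t f = \sum_y p_t(0,y)\, f \circ \theta_y$, so $S_n(P^\circ_t f) = \sum_y p_t(0,y)\, S_n(f) \circ \theta_y$. Jensen's inequality and translation invariance of $\P$ then give $\E[(S_n(P^\circ_t f))^2] \le \E[(S_n(f))^2]$ for every $n$, hence $\mclN(P^\circ_t f) \le \mclN(f)$. This is the point where the argument really relies on the fact that the walk does not see the environment and explains why the argument of the theorem is restricted to $f^\circ_t$ rather than $f_t$.

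\emph{Step 3: conclusion.} With the Nash inequality of Step~1 and the contractivity of Step~2, the functional $V = \mclN$ fits precisely into the framework of \cite[Theorem~2.2]{ligg}, whose conclusion applied to the self-adjoint Markov semigroup generated by $\L^\circ$ on $L^2(\P)$ yields $\E[(f^\circ_t)^2] \le C \mclN(f)/t^{d/2}$.

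The main technical obstacle is Step~1: making the passage from the quenched spectral gap on a torus to an annealed inequality whose right-hand side contains only the annealed Dirichlet form, and correctly bookkeeping the non-centered cross term $\E[f S_n(f)]/|B_n|$ so that the optimization in $n$ produces the right exponent $d/2$ and a constant depending only on $d$. Steps~2 and~3 are then essentially soft, once the Nash inequality is in the correct form.
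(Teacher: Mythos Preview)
Your proposal is correct and follows essentially the same route as the paper. The paper derives the key inequality $\E[f^2]\le C\big(n^2\cE^\circ(f,f)+\mclN(f)/|B_n|\big)$ (Proposition~\ref{seminash}) via $(a+b)^2\le 2a^2+2b^2$ rather than your Cauchy--Schwarz absorption, proves the contractivity $\mclN(f^\circ_t)\le\mclN(f)$ from the same semigroup--translation commutation you use (Proposition~\ref{contract}), and then carries out the Liggett differential-inequality integration by hand (Propositions~\ref{nash} and~\ref{decaygeneral}) rather than citing \cite{ligg} directly.
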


This result has its partial counterpart concerning the random walk among random conductances, as follows.

\begin{thm}
\label{main:d<}
There exists $C > 0$ such that for any $f : \Omega \to \R$, if $\mathcal{N}(f)$ is finite, then for any $t > 0$, we have~:
\begin{eqnarray*}
\E[(f_t)^2] \le C \ \frac{\mclN(f)}{\sqrt{t}}  & \text{ if } d = 1, \\
\E[(f_t)^2] \le C \ \frac{\mclN(f)\ln_+(t)}{t}  & \text{ if } d = 2, \\
\E[(f_t)^2] \le C \ \frac{\mclN(f)}{t}  & \text{ if } d \ge 3 .
\end{eqnarray*}
Moreover, if $d \ge 3$, we also have~:
\begin{equation}
\label{main:condkipvar}
\int_0^{+\infty} \E[(f_t)^2] \ \d t \le C \mclN(f).
\end{equation}
\end{thm}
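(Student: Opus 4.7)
The plan is to derive Theorem~\ref{main:d<} from Theorem~\ref{main:indep} by a comparison of resolvents. Since $\omega_e \ge 1$ for every edge, the Dirichlet form
\[
\cE(f,f) = \frac{1}{2}\,\E\!\left[\sum_{|z|=1}\omega_{0,z}(f(\theta_z\,\omega)-f(\omega))^2\right]
\]
associated with $\L$ dominates pointwise the form $\cE^\circ(f,f)$ associated with $\L^\circ$, and both generators are non-positive self-adjoint operators on the same Hilbert space $L^2(\P)$. Via the standard variational representation
\[
\E\bigl[f(\lambda - \L)^{-1} f\bigr] \;=\; \sup_g \bigl\{\,2\E[fg] - \lambda\,\E[g^2] - \cE(g,g)\bigr\},
\]
the inequality $\cE \ge \cE^\circ$ reverses to give $\E[f(\lambda-\L)^{-1}f] \le \E[f(\lambda-\L^\circ)^{-1}f]$ for every $\lambda>0$. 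Self-adjointness of the semigroups gives $\E[(f_t)^2] = \E[f\,f_{2t}]$, and likewise for the simple walk, so this resolvent comparison translates directly into the Laplace-transform bound
\begin{equation}\label{planlapcomp}
\int_0^\infty e^{-\lambda t}\,\E[(f_t)^2]\,\d t \;\le\; \int_0^\infty e^{-\lambda t}\,\E[(f^\circ_t)^2]\,\d t, \qquad \lambda > 0.
\end{equation}

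The integrability assertion~(\ref{main:condkipvar}) follows from~(\ref{planlapcomp}) by sending $\lambda\to 0^+$ and applying monotone convergence on both sides. Combined with Theorem~\ref{main:indep} and the elementary bound $\mclN(f)\ge \E[f^2]$ (obtained by taking $n=0$ in the definition of $\mclN$), one has $\E[(f^\circ_t)^2] \le C\mclN(f)\min(1, t^{-d/2})$, whose integral is finite precisely when $d\ge 3$.

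For the pointwise bounds I would exploit the fact that $t\mapsto \E[(f_t)^2]$ is non-increasing, a general property of self-adjoint semigroups visible from the spectral representation. This gives the Tauberian-type estimate
\[
\E[(f_T)^2]\;\le\;\frac{e}{(e-1)\,T}\int_0^\infty e^{-t/T}\,\E[(f_t)^2]\,\d t,
\]
so that setting $\lambda = 1/T$ in~(\ref{planlapcomp}) reduces matters to bounding $\int_0^\infty e^{-t/T}\min(1,t^{-d/2})\,\d t$. This integral is of order $\sqrt{T}$ for $d=1$, of order $\ln_+(T)$ for $d=2$, and $O(1)$ for $d\ge 3$; dividing by $T$ yields the three claimed rates.

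The one step requiring care is justifying the resolvent comparison cleanly on $L^2(\P)$; but since both Dirichlet forms are closed and densely defined and share the same reference measure $\P$, the monotonicity of the variational formula above goes through essentially verbatim. Everything else is a routine Tauberian manipulation. The logarithmic correction in dimension two is the expected loss incurred by passing from a Laplace-transform bound to a pointwise bound at the critical exponent, and is precisely the price of going through resolvent comparison rather than a direct semigroup analysis.
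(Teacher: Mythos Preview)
Your proof is correct and follows essentially the same route as the paper: resolvent comparison between $\L$ and $\L^\circ$, combined with the monotonicity of $t\mapsto\E[(f_t)^2]$ to pass from a Laplace-transform bound to a pointwise one, and then Theorem~\ref{main:indep} to estimate $\int_0^\infty e^{-\lambda t}\E[(f^\circ_t)^2]\,\d t$ in each dimension. The only notable difference is in how the resolvent inequality $(f,R_\lambda f)\le(f,R^\circ_\lambda f)$ is justified: the paper uses a Cauchy--Schwarz argument with the positive form $-\L^\circ+\mu$ (after \cite{blp}), whereas you use the variational characterization of the resolvent; both are standard and yield the same conclusion.
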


\noindent \textbf{Remarks.} As the proof reveals, the constant $C$ appearing in Theorems~\ref{main:indep} and \ref{main:d<} can be chosen so that the results hold for any law $\P$ (provided it satisfies the always assumed uniform lower bound $\omega \ge 1$). Moreover, one can generalize these results to cases when $\mclN(f)$ is infinite, see Proposition~\ref{generalization}.

For larger dimensions, we obtain the following result.

\begin{thm}
\label{main:d>}
Under assumption $(\mathfrak{I})$, there exists $C$ such that for any $f : \Omega \to \R$, if $\rN(f)$ is finite and $\E[f] = 0$, then for any $t > 0$, we have~:
\begin{eqnarray*}
%\E[(f_t)^2] \le C \ \frac{\rN(f)}{\ln_+(t)^{2} } & \text{ if } d = 4 \\
\E[(f_t)^2] \le C \ \frac{\rN(f)}{t^{d/2-2} } & \text{ if } d \ge 5.
\end{eqnarray*}
\end{thm}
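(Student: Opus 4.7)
The plan is to combine a Nash-type inequality adapted to the functional $\rN$, obtained along the lines of Section~\ref{s:specgap}, with a martingale decomposition over edges that exploits assumption $(\mathfrak{I})$ to compensate for the absence of semigroup contractivity of $\rN$. Setting $u(t) = \E[(f_t)^2]$, one has $u'(t) = -2\cE(f_t, f_t)$ by the standard Dirichlet form calculation. Targeting the exponent $\alpha = (d-4)/2$, I would aim at a Nash-type inequality of the form
$$
\E[g^2]^{1+2/(d-4)} \le C \, \cE(g,g) \, \rN(g)^{2/(d-4)}
$$
which, if $\rN(f_t)$ stayed under control in $t$, would immediately yield the claim after integration of the resulting differential inequality.

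The core issue is therefore to control $\rN(f_t)$ as $t$ grows. The bound $\|f_t\|_\infty \le \|f\|_\infty$ is trivial, so only $\tri f_t \tri$ requires work. Under assumption $(\mathfrak{I})$ the conductances are i.i.d., which makes available a Doob/Efron--Stein decomposition of $f_t - \E[f]$ along an enumeration of $\mathbb{B}^d$ and reduces the task to a quenched estimate of the sensitivity of $f_t$ to a single edge. For this I would use the following coupling: given environments $\omega, \omega'$ that agree off an edge $e$, run the walks $X, X'$ in $\omega, \omega'$ with shared Poisson clocks so that they coincide up to the first time $\tau$ either walk uses $e$. For $t < \tau$ the translated environments $\theta_{X_t}\omega$ and $\theta_{X_t}\omega'$ differ only on the edge $e - X_t$, yielding $|f(\theta_{X_t}\omega) - f(\theta_{X_t}\omega')| \le |\grad f|(e - X_t)$; for $t \ge \tau$ the crude bound $2\|f\|_\infty$ applies. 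Summing the resulting pointwise estimate of $|\grad f_t|(e)$ over $e$, the coupled terms reassemble by translation invariance into a multiple of $\tri f \tri^2$, while the decoupled terms produce $\|f\|_\infty^2$ times the expected number of distinct edges used by the walk up to time $t$.

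The main obstacle I expect is precisely this decoupled contribution: with conductances only bounded from below, the ``visited edge'' count can grow roughly linearly in $t$, and one must check that the resulting growth of $\rN(f_t)$, once fed into the differential inequality, shaves off exactly two powers from the contractive decay $t^{-d/2}$ and no more. The restriction $d \ge 5$ enters at precisely this step as the regime in which $\alpha = (d-4)/2$ is positive and the Nash-type inequality produces useful information; the independence assumption $(\mathfrak{I})$ is essential for the martingale decomposition and would fail under a merely ergodic $\P$.
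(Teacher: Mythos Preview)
Your overall architecture is right and close to the paper's, but there is a genuine gap in the choice of the functional you try to propagate. You aim to control $\rN(f_t)$, and in particular $\tri f_t \tri = \sum_e |\grad f_t|(e)$, where $|\grad f_t|(e)$ is a \emph{supremum} over all environments in the support of $\P$. Your coupling gives, for a fixed pair $(\omega,\omega')$ differing only on $e$, a bound of the form $\EE^\omega_0[|\grad f|(e-X_t)] + 2\|f\|_\infty \, \PP^\omega_0[\tau_e \le t]$; but to get $|\grad f_t|(e)$ you must take the supremum over $\omega$ \emph{before} summing over $e$. Under assumption~$(\mathfrak{I})$ the conductances are only bounded from below, so for any fixed edge $e$ and any $t>0$ one can pick an environment in the support of $\P$ with arbitrarily large conductances along a path from $0$ to $e$, making $\PP^\omega_0[\tau_e \le t]$ as close to $1$ as desired. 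Hence $\sum_e \sup_\omega \PP^\omega_0[\tau_e\le t] = +\infty$, and your ``expected number of distinct edges used'' is not the quantity you are actually facing: that interpretation would be valid only if the sup and the sum commuted, which they do not. The Efron--Stein/Doob decomposition you invoke is an $L^2(\P)$ tool and cannot bound a uniform-in-$\omega$ quantity like $\tri f_t \tri$.

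The fix, which is what the paper does, is to abandon $\rN(f_t)$ as the propagated functional and instead control the $L^2$ quantity $\mclN(f_t)=\sup_n |B_n|^{-1}\E[S_n(f_t)^2]$ directly. One applies the martingale decomposition to $S_n(f_t)$ (not to $f_t$ alone), splits each increment according to whether the walk has met the edge $e_k$ by time $t$, and bounds the ``decoupled'' part in $L^2(\P)$. Because one now averages over $\P$ rather than taking a supremum, the unboundedness of the conductances is handled by a good/bad site percolation argument (sites with anomalously large total jump rate are rare under $\P$), yielding $\mclN(f_t)\le c\,\rN(f)(1+t)^2$. This is then fed into the standard Nash inequality of Section~\ref{s:specgap} with exponent $d$ (not $d-4$; your displayed Nash inequality with $\rN$ and exponent $d-4$ is neither needed nor available), and integrating the differential inequality as in Proposition~\ref{decaygeneral} produces the loss of exactly two powers, giving $t^{-(d/2-2)}$ for $d\ge 5$.
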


\noindent \textbf{Remarks.} If $\eta$ is some positive real number, one can choose $C$ so that the result in Theorem~\ref{main:d>} is valid uniformly for any law $\P$ satisfying the property given in equation~(\ref{q:percolepas}). It turns out however that Theorem~\ref{main:d<} is stronger when $d \le 6$. Indeed, one can show using the martingale method introduced in section~\ref{s:martmeth} that, under assumption ($\mathfrak{I}$), we have $\mclN(f) \le \rN(f)$. Finally, we point out that, for a specific class of functionals, Proposition~\ref{gtraj} provides a variation on this result that is also of interest.

Our starting point (section~\ref{s:specgap}) is the existence of spectral gap inequalities for the dynamics restricted to a finite box of size $n$. For this dynamics, it gives the rate of convergence of the environment viewed by the particle towards the empirical measure (that is, the uniform measure over any translation of the environment, provided we keep the range of the translations inside the box). We do some computation that makes the measure $\P$ come into play, and a corrective term appears (see Proposition~\ref{seminash}). 

In the case of $(\omega^\circ(t))$, we show in section~\ref{s:simpleRW} that this corrective term decays with the time evolution, thus leading to Theorem~\ref{main:indep}. Noting that the Dirichlet form associated with $(\omega^\circ(t))$ is dominated by the one of $(\omega(t))$, we obtain Theorem~\ref{main:d<} in section~\ref{s:resolvents} by a comparison of the resolvents of these processes. 

A second approach is to estimate directly this (not necessarily decreasing with time) corrective term for the random walk under random conductances. We do so under assumption ($\mathfrak{I}$) in section~\ref{s:martmeth} via a martingale method. We finally prove Theorem~\ref{main:d>} (together with some variations) in section~\ref{s:concl}, using the former results. 

We now present some consequences of the previous theorems. 

Whenever $f \in L^2(\P)$, we define $e_f$ as the spectral measure of $-\L$ (as a self-adjoint operator on $L^2(\P)$) projected on the function $f$, so that for any bounded continuous $\Psi : [0,+\infty) \to \R$~:
\begin{equation}
\label{defef}
\E[\Psi(-\L)(f)(\omega) f(\omega)] = \int \Psi(\lambda) \d e_f(\lambda).
\end{equation}

Lemma~\ref{lemresolvent} states that~:
\begin{equation}
\label{condkipvar}
\int_0^{+\infty} \E[(f_{t})^2] \ \d t = 2 \int \frac{1}{\lambda} \ \d e_f(\lambda).
\end{equation}
Hence, a consequence of equation~(\ref{main:condkipvar}) is that, whenever $d \ge 3$ and $\mclN(f)$ is finite,
$$
\int \frac{1}{\lambda} \ \d e_f(\lambda) < + \infty.
$$

This condition is shown in \cite{kipvar} to be necessary and sufficient to ensure that $(Z_t)_{t \ge 0}$ (defined in~(\ref{defZt})) satisfies an invariance principle. More precisely, the authors show that there exist $(M_t)_{t \ge 0}$, $(\xi_t)_{t \ge 0}$ such that $Z_t = M_t + \xi_t$, where $(M_t)$ is a martingale with stationary increments under $\ov{\P}$ (and the natural filtration), and $(\xi_t)$ is such that~:
\begin{equation}
\label{xitendvers0}
\frac{1}{t} \ov{\E}[(\xi_t)^2] \xrightarrow[t \to + \infty]{} 0.
\end{equation}
Using this decomposition, they prove that $(\eps Z_{t/\eps^2})_{t \ge 0}$ converges, as $\eps$ goes to $0$, to a Brownian motion of variance
\begin{equation}
\label{defsigma}
\sigma^2 = \ov{\E}[(M_1)^2] = 2 \int \frac{1}{\lambda} \ \d e_f(\lambda).
\end{equation}

We will show first that an algebraic decay (with exponent strictly greater than~$1$) of the variance of $(f_t)$ is equivalent to a particular behaviour of the spectral measure $e_f$ close to $0$. 

\begin{thm}
\label{tcl}
Let $\alpha > 1$ and $f \in L^2(\P)$.
The following statements are equivalent~:
\begin{enumerate}
\item
\label{tcl:equiv1}
There exists $C > 0$ such that for any $t > 0$,
$$
\E[(f_t)^2] \le \frac{C}{t^\alpha}.
$$
%\item 
%\label{tcl:equiv2}
%There exists $C > 0$ such that for any $t > 0$,
%$$
%\ov{\E}[f(\omega(0)) f(\omega(t))] \le \frac{C}{t^{\alpha}}.
%$$
\item
\label{tcl:equiv3}
There exists $C > 0$ such that for any $\delta > 0$,
$$
\int_{[0,\delta]} \frac{1}{\lambda} \ \d e_f(\lambda) \le C \delta^{\alpha - 1}.
$$
\end{enumerate}
\end{thm}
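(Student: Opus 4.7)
The plan is to start from the spectral representation
\[
\E[(f_t)^2] = \E[f \cdot f_{2t}] = \int_0^\infty e^{-2\lambda t} \, \d e_f(\lambda),
\]
the first equality being~(\ref{correlations}) and the second following from~(\ref{defef}) applied to $\Psi(\lambda) = e^{-2\lambda t}$. A preliminary observation is that both statements~(\ref{tcl:equiv1}) and~(\ref{tcl:equiv3}) rule out an atom of $e_f$ at zero: the former implies $\E[(f_t)^2] \to 0$, while under the latter such an atom would make the integral in~(\ref{tcl:equiv3}) infinite. In both directions one can therefore integrate on $(0, +\infty)$ only.

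For~(\ref{tcl:equiv3})~$\Rightarrow$~(\ref{tcl:equiv1}), set $H(\lambda) = e_f([0, \lambda])$. Since $1/\lambda \ge 1/\delta$ on $[0,\delta]$, one has $H(\delta)/\delta \le \int_{[0, \delta]} \frac{1}{\lambda} \, \d e_f(\lambda)$, so the hypothesis yields $H(\lambda) \le C \lambda^\alpha$ for all $\lambda > 0$. An integration by parts, whose boundary terms vanish because $H(0) = 0$ and $H$ is bounded by $\|f\|_2^2$, gives
\[
\E[(f_t)^2] = 2t \int_0^\infty e^{-2\lambda t} H(\lambda) \, \d \lambda \le 2Ct \int_0^\infty e^{-2\lambda t} \lambda^\alpha \, \d \lambda,
\]
and the change of variables $u = 2 \lambda t$ reduces the right-hand side to $2^{-\alpha} C \Gamma(\alpha+1) \, t^{-\alpha}$.

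For~(\ref{tcl:equiv1})~$\Rightarrow$~(\ref{tcl:equiv3}), I would represent $1/\lambda$ using the semigroup kernel: the identity $\int_T^\infty 2 e^{-2\lambda s} \, \d s = e^{-2\lambda T}/\lambda$, taken at $T = 1/\delta$, implies that for every $\lambda \in (0, \delta]$,
\[
\frac{1}{\lambda} \le e^2 \int_{1/\delta}^\infty 2 e^{-2\lambda s} \, \d s.
\]
Integrating against $\1_{[0, \delta]} \, \d e_f$ and exchanging the order of integration by Fubini leads to
\[
\int_{[0, \delta]} \frac{1}{\lambda} \, \d e_f(\lambda) \le 2 e^2 \int_{1/\delta}^\infty \E[(f_s)^2] \, \d s \le 2Ce^2 \int_{1/\delta}^\infty s^{-\alpha} \, \d s = \frac{2Ce^2}{\alpha - 1} \, \delta^{\alpha - 1},
\]
the finiteness of the last integral being precisely where the assumption $\alpha > 1$ is used.

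No serious obstacle is anticipated: the argument is essentially self-contained once the spectral identity is available, and the only points that need verification---absence of an atom at $0$ and the vanishing of boundary terms in the integration by parts---are built into the hypotheses themselves. The role of $\alpha > 1$ is transparent in both directions, controlling either the moment $\int e^{-2\lambda t} \lambda^\alpha \, \d\lambda$ or the tail integral of $s^{-\alpha}$.
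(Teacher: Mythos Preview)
Your proof is correct, and the route differs from the paper's in both directions.

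For $(\ref{tcl:equiv3}) \Rightarrow (\ref{tcl:equiv1})$, the paper writes $e^{-\lambda t} = \frac{1}{\lambda}\int_\lambda^\infty (\delta t-1)e^{-\delta t}\,\d\delta$, swaps the order of integration, and plugs in the hypothesis on $\int_{[0,\delta]}\lambda^{-1}\,\d e_f$. You instead pass to the cumulative $H(\lambda)=e_f([0,\lambda])$, extract the pointwise bound $H(\lambda)\le C\lambda^{\alpha}$, and integrate by parts. For $(\ref{tcl:equiv1}) \Rightarrow (\ref{tcl:equiv3})$, the paper multiplies the Laplace bound by a tailored cutoff $\chi(\delta t)$ (zero on $[0,1)$, equal to $1-e^{-t}$ on $[1,\infty)$) and integrates in $t$, the point being that $\int_0^\infty e^{-\lambda t}\chi(\delta t)\,\d t$ dominates $\frac{1}{2e\lambda}\1_{[0,\delta]}(\lambda)$. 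You instead bound $\lambda^{-1}\le e^2\int_{1/\delta}^\infty 2e^{-2\lambda s}\,\d s$ on $(0,\delta]$ and apply Fubini directly. Both arguments are standard Tauberian moves linking the Laplace transform to its integrated tail; your version is a bit more direct (no auxiliary kernel $\chi$, no integral identity for $\lambda e^{-\lambda t}$), while the paper's approach has the minor advantage of working entirely with $\int\lambda^{-1}\,\d e_f$ and never invoking the distribution function $H$.
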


It turns out that this additional control of the spectral measure enables us to estimate the speed of convergence in equation~(\ref{xitendvers0}). This provides us enough information to estimate the speed of convergence of $\ov{\E}[(Z_t)^2]/t$ towards its limit.

For any $\alpha > 1$, let $\psi_\alpha : [0,+\infty) \to \R$ be defined by
\begin{equation}
\label{defpsi}
\psi_\alpha(t) = 
\left|
\begin{array}{ll}
t^{\alpha-1} & \text{if } \alpha < 2 \\
t/(\ln_+(t)) & \text{if } \alpha = 2 \\
t & \text{if } \alpha > 2 .
\end{array}
\right. 
\end{equation}

\begin{thm}
\label{xipetit}
Under one of the equivalent conditions (\ref{tcl:equiv1}), (\ref{tcl:equiv3}) of Theorem~\ref{tcl} (and in particular under assumption $(\mathfrak{I})$, if $\rN(f)$ is finite and $d \ge 7$) : 
\begin{enumerate}
\item
One can construct $(M_t)$, $(\xi_t)$ such that $Z_t = M_t + \xi_t$, where $(M_t)$ is a martingale with stationary increments under $\ov{\P}$, and $(\xi_t)$ is such that, for some $C > 0$~:
$$
\frac{1}{t} \ov{\E}[(\xi_t)^2] \le \frac{C}{\psi_\alpha(t)}.
$$
\item
There exists $C > 0$ such that for any $t \ge 0$ :
\begin{equation*}
\label{tclspeedgen}
0 \le \sigma^2 - \frac{1}{t} \ov{\E}\left[\left( Z_t \right)^2\right] \le \frac{C}{\psi_\alpha(t)},
\end{equation*}
where $\sigma$ is defined by (\ref{defsigma}).
\end{enumerate}
\end{thm}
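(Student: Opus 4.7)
The two claims turn out to rest on a single spectral quantity. Write $I(t) := \int_0^\infty (1-e^{-\lambda t})\lambda^{-2}\,de_f(\lambda)$, which is finite because condition~(\ref{tcl:equiv3}) together with $\alpha>1$ ensures $\int \lambda^{-1}\,de_f(\lambda) < \infty$, so $\sigma^2$ is finite and the Kipnis-Varadhan machinery of~\cite{kipvar} applies. The parenthetical sufficient condition in the statement follows directly from Theorem~\ref{main:d>} and Theorem~\ref{tcl}, since $d/2 - 2 > 1$ precisely when $d \ge 7$.

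For part~(2), I would combine stationarity of $(\omega(t))$ under $\ov{\P}$ with reversibility and spectral calculus: writing $\ov{\E}[f(\omega(r))f(\omega(s))] = \int e^{-\lambda|s-r|}\,de_f(\lambda)$, a direct computation based on $\int_0^t (t-s)e^{-\lambda s}\,ds = t/\lambda - (1-e^{-\lambda t})/\lambda^2$ and (\ref{defsigma}) gives
\begin{equation*}
\ov{\E}[Z_t^2] \;=\; 2\int_0^t (t-s)\!\int e^{-\lambda s}\,de_f(\lambda)\,ds \;=\; t\sigma^2 - 2I(t).
\end{equation*}
This immediately yields $\sigma^2 - \ov{\E}[Z_t^2]/t \ge 0$ and reduces the problem to bounding $R(t) := 2I(t)/t$. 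Rewriting $R(t) = (2/t)\int_0^t K(s)\,ds$ with $K(s) := \int e^{-\lambda s}\lambda^{-1}\,de_f(\lambda)$, I split the defining integral of $K(s)$ at $\lambda = 1/s$: on $\lambda \le 1/s$ use $e^{-\lambda s} \le 1$ and~(\ref{tcl:equiv3}) to bound the contribution by $Cs^{1-\alpha}$; on $\lambda > 1/s$ use $\lambda^{-1} \le s$ together with the identity $\int e^{-\lambda s}\,de_f(\lambda) = \E[(f_{s/2})^2]$ coming from~(\ref{correlations}) and hypothesis~(\ref{tcl:equiv1}) to obtain again $Cs^{1-\alpha}$. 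Hence $K(s) \le C\min(1, s^{1-\alpha})$, and integrating in the three regimes gives $\int_0^t K(s)\,ds \lesssim t^{2-\alpha}$ if $\alpha<2$, $\lesssim \ln_+(t)$ if $\alpha=2$, and $O(1)$ if $\alpha>2$, which is exactly the bound $Ct/\psi_\alpha(t)$.

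For part~(1), I would construct $(M_t)$ by the standard Kipnis-Varadhan resolvent approximation: set $u_\eps := (\eps - \L)^{-1}f$ and note via Dynkin's formula that
\begin{equation*}
M_t^\eps \;:=\; u_\eps(\omega(t)) - u_\eps(\omega(0)) + Z_t - \eps\!\int_0^t u_\eps(\omega(s))\,ds
\end{equation*}
is a martingale; the finiteness of $\sigma^2$ yields convergence $M_t^\eps \to M_t$ in $L^2(\ov{\P})$ to a martingale with stationary increments and $\ov{\E}[M_1^2] = \sigma^2$, and I define $\xi_t := Z_t - M_t$. Stationarity and reversibility applied to $\xi_t^\eps := Z_t - M_t^\eps$, followed by dominated convergence as $\eps \to 0$ with dominant $(1-e^{-\lambda t})/\lambda^2 \le t/\lambda$ (integrable against $de_f$ because $\sigma^2 < \infty$), produce the identity
\begin{equation*}
\ov{\E}[\xi_t^2] \;=\; 2I(t) \;=\; t\,R(t),
\end{equation*}
and dividing by $t$ and invoking the bound on $R(t)$ from part~(2) closes the proof. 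The main technical point is this limiting procedure: when $\alpha \le 2$ the formal antiderivative $u = (-\L)^{-1}f$ does not lie in $L^2(\P)$, so one cannot simply set $\xi_t = u(\omega(0)) - u(\omega(t))$; instead one must verify, via the time-reversal symmetry $s \leftrightarrow t-s$, that the cross term $2\ov{\E}[(u_\eps(\omega(0))-u_\eps(\omega(t)))\cdot\eps\!\int_0^t u_\eps(\omega(s))\,ds]$ vanishes identically and that the pure $\eps\!\int_0^t u_\eps(\omega(s))\,ds$ contribution tends to $0$ in $L^2(\ov{\P})$ at fixed $t$, leaving precisely the spectral integral $2I(t)$ above.
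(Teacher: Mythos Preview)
Your proof is correct and follows the same overall architecture as the paper: both rely on the Kipnis--Varadhan construction (Theorem~\ref{thmkipvar}) to produce $(M_t,\xi_t)$ with the spectral identity $\ov{\E}[(\xi_t)^2]=2I(t)$, and both reduce part~(2) to the same bound on $I(t)/t$; the paper obtains the latter by expanding $\ov{\E}[(M_t)^2]=\ov{\E}[(Z_t-\xi_t)^2]$ and killing the cross term $\ov{\E}[Z_t\xi_t]$ via time reversal, which is algebraically equivalent to your direct spectral computation $\ov{\E}[Z_t^2]=t\sigma^2-2I(t)$.

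The only genuine technical divergence is in how $I(t)/t$ is estimated. The paper (Proposition~\ref{propdecay}) splits the $\lambda$-integral over $[0,1/t)\cup[1/t,1)\cup[1,\infty)$ and handles the middle piece by an integration by parts in $\lambda$, invoking only condition~(\ref{tcl:equiv3}). You instead write $I(t)=\int_0^t K(s)\,\d s$ and bound $K(s)$ pointwise by splitting at $\lambda=1/s$, using condition~(\ref{tcl:equiv3}) on the low-frequency part and condition~(\ref{tcl:equiv1}) on the high-frequency part. Your route is slightly more elementary (no integration by parts) at the cost of invoking both equivalent hypotheses; the paper's route is more self-contained in that it stays on the spectral side throughout. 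Both yield exactly the rates $t^{1-\alpha}$, $\ln_+(t)/t$, $1/t$ defining $1/\psi_\alpha(t)$.
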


In \cite[Section 4]{kipvar} and \cite[Section 4]{masi}, the authors prove in particular that under assumption $(\mathfrak{A})$, the random walk satisfies an invariance principle : $(\eps X_{t/\eps^2})_{t \ge 0}$ converges, as $\eps$ goes to $0$, to a Brownian motion of covariance matrix $\ov{\sigma}^2 \rm{Id}$. A consequence of Theorems~\ref{main:d>} and \ref{xipetit} is that, when $d \ge 7$, we can give an estimate of the speed of convergence of the mean square displacement towards its limit.

\begin{cor}
\label{tclcor}
Under assumption $(\mathfrak{A})$ and when $d \ge 7$, there exists $C > 0$ such that for any $t \ge 0$~:
\begin{equation}
\label{tclspeedex}
0 \le \frac{1}{t} \ov{\E}\left[(\| X_t \|_2)^2\right] - d \ov{\sigma}^2 \le \frac{C}{\psi_{\alpha}(t)} \qquad \text{ with } \alpha = \frac{d}{2}-2.
\end{equation}
\end{cor}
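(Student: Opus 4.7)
The plan is to reduce the corollary to Theorem~\ref{xipetit}(2), applied to the drift part of the walk, via a time-reversal identity that renders the jump-martingale contribution transparent. By the permutation symmetry of the iid law, it suffices to prove $0 \le \tfrac{1}{t}\ov{\E}[(X_t^{(i)})^2] - \ov{\sigma}^2 \le C/\psi_\alpha(t)$ for a single coordinate $i$ and then sum over $i$. Set $f_i(\omega) := \omega_{0,e_i} - \omega_{0,-e_i}$ and decompose $X_t^{(i)} = N_t^{(i)} + Z_t^{(i)}$, where $Z_t^{(i)} := \int_0^t f_i(\omega(s))\, \d s$ is the annealed drift and $N_t^{(i)}$ is the compensated jump martingale; its predictable quadratic variation integrates the state function $\omega_{0,e_i} + \omega_{0,-e_i}$, so $\ov{\E}[(N_t^{(i)})^2] = 2t\,\E[\omega_e]$.

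The engine of the proof is the identity
\begin{equation*}
\ov{\E}\bigl[X_t^{(i)} Z_t^{(i)}\bigr] = 0,
\end{equation*}
which I derive from the $\P$-reversibility of the environment process. Because $X_s$ is a deterministic functional of $(\omega(u))_{u \le s}$ via $\omega(s) = \theta_{X_s}\omega(0)$, this reversibility upgrades to the joint statement that, under $\ov{\P}$, $(X_t, (\omega(s))_{s \le t})$ has the same law as $(-X_t, (\omega(t-s))_{s \le t})$. Now $Z_t^{(i)}$ is a path integral of a state function and therefore invariant under time reversal, while $X_t^{(i)}$ flips sign; hence $\ov{\E}[X_t^{(i)} Z_t^{(i)}] = -\ov{\E}[X_t^{(i)} Z_t^{(i)}]$. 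Expanding $(X_t^{(i)})^2 = (N_t^{(i)} + Z_t^{(i)})^2$ and using this identity (equivalently, $\ov{\E}[N_t^{(i)} Z_t^{(i)}] = -\ov{\E}[(Z_t^{(i)})^2]$) produces the clean formula
\begin{equation*}
\ov{\E}[(X_t^{(i)})^2] = 2t\,\E[\omega_e] - \ov{\E}[(Z_t^{(i)})^2].
\end{equation*}

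It then remains to invoke Theorem~\ref{xipetit}(2) for $Z_t^{(i)}$. Under $(\mathfrak{A})$, $f_i$ is bounded, depends on only two edges, and has $\E[f_i] = 0$, so $\rN(f_i) < \infty$; Theorem~\ref{main:d>} gives $\E[(f_{i,t})^2] \le C/t^{d/2-2}$, which by Theorem~\ref{tcl} is exactly condition~(\ref{tcl:equiv3}) with exponent $\alpha = d/2-2$. The hypothesis $d \ge 7$ is precisely what forces $\alpha > 1$; writing $\sigma_i^2$ for the asymptotic variance of $Z_t^{(i)}$ given by~(\ref{defsigma}), Theorem~\ref{xipetit}(2) delivers $0 \le t\sigma_i^2 - \ov{\E}[(Z_t^{(i)})^2] \le Ct/\psi_\alpha(t)$. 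Substituting,
\begin{equation*}
0 \le \ov{\E}[(X_t^{(i)})^2] - t\bigl(2\E[\omega_e] - \sigma_i^2\bigr) \le \frac{Ct}{\psi_\alpha(t)}.
\end{equation*}
Dividing by $t$ and letting $t \to \infty$, comparison with the annealed invariance principle of~\cite{masi} identifies $\ov{\sigma}^2 = 2\E[\omega_e] - \sigma_i^2$ (necessarily the same constant for every $i$, by coordinate symmetry); summing over $i = 1, \ldots, d$ then delivers~(\ref{tclspeedex}).

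The delicate point I expect is making the joint time-reversal symmetry rigorous: while $\P$-reversibility of $\omega(\cdot)$ under $\ov{\P}$ is standard, transferring it to the coupled process $(X, \omega)$ rests on verifying that $X_s$ is recoverable $\ov{\P}$-almost surely as a deterministic functional of $(\omega(u))_{u \le s}$. Once that is in place, the rest is algebraic; in particular, the lower bound in~(\ref{tclspeedex}) falls directly out of the corresponding non-negativity in Theorem~\ref{xipetit}(2), and no cross term needs separate estimation.
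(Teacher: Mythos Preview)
Your proof is correct and mirrors the paper's argument (Proposition~\ref{prop:tclcor}) essentially step for step: the same martingale-plus-drift decomposition of a single coordinate, the same time-reversal identity $\ov{\E}[X_t^{(i)}Z_t^{(i)}]=0$ (with the same caveat that $X$ must be recoverable from the environment path, which the paper dismisses with the remark that $\omega$ is almost surely aperiodic), and the same appeal to Theorem~\ref{main:d>} followed by Theorem~\ref{xipetit}(2). The only cosmetic difference is that you compute the martingale variance explicitly as $2t\,\E[\omega_e]$, whereas the paper just names it $t(\sigma')^2$ and identifies $\ov{\sigma}^2=(\sigma')^2-\sigma^2$ at the end.
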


Theorems \ref{tcl} and \ref{xipetit} are proved in section~\ref{s:clt}, together with Corollary~\ref{tclcor}.

\noindent \textbf{Remark.} As we discuss in the addendum, results from \cite{gloria2} enable to strengthen Corollary~\ref{tclcor} when $d \le 8$ (see in particular Corollary~\ref{cor:gloria}).%Indeed, we will see that one can replace $\psi_{\alpha}(t)$ by $t$ in (\ref{tclspeedex}) as soon as $d \ge 3$, and replace it by $t^{1-\eps}$ for any $\eps > 0$ when $d = 2$. 

\noindent \textbf{Some more notations.} We introduce the Dirichlet forms associated with the processes $(\omega(t))$ and $(\omega^\circ(t))$, respectively~:
\begin{equation*}
%\label{defDirichl}
\cE(f,f) = - \E[\L f (\omega) f (\omega)] = \frac{1}{2} \sum_{|z| = 1} \E\left[\omega_{0,z}(f(\theta_z \ \omega) - f(\omega))^2\right],
\end{equation*}
and
\begin{equation*}
%\label{defDirichl0}
\cE^\circ(f,f) = - \E[\L^\circ f (\omega) f (\omega)] = \frac{1}{2} \sum_{|z| = 1} \E\left[(f(\theta_z \ \omega) - f(\omega))^2\right].
\end{equation*}
For $A$ a subset of $\Z^d$, we will write $|A|$ for its cardinal. We also define its inner boundary as
$$
\underline{\partial}A = \{ x \in A : \exists y \in \Z^d \setminus A \ x \sim y \}.
$$
The letter $C$ refers to a strictly positive number, that may not be the same from one occurrence to another. 

%
%
%
%%%%%%%%%%%%%%%%%%%%%%%%%%%%%%%%%%%%%%%%%%%%%%%%%%%%%%%%%%%%%%
%%%%%%%%%%%%%%%%%%%     Spectral gap     %%%%%%%%%%%%%%%%%%%%%
%%%%%%%%%%%%%%%%%%%%%%%%%%%%%%%%%%%%%%%%%%%%%%%%%%%%%%%%%%%%%%
%
%
%
\section{From spectral gap to Nash inequality}
\label{s:specgap}
\setcounter{equation}{0}

\begin{prop}[Spectral gap]
\label{specgap}
There exists $C_S > 0$ such that for any $n \in \N$ and any function $g : B_n \to \R$, we have~:
$$
\sum_{x \in B_n} (g(x) - m_n(g))^2 \le \frac{C_S}{4} n^2 \sum_{\substack{x,y \in B_n \\ x \sim y}} (g(y)-g(x))^2,
$$
where $m_n(g)$ is given by
$$
m_n(g) = \frac{1}{|B_n|} \sum_{x \in B_n} g(x).
$$
\end{prop}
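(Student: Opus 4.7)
This is the standard discrete Poincar\'e (spectral gap) inequality for the uniform measure on $B_n$ equipped with its nearest-neighbour Dirichlet form, so I will follow the canonical path approach. First, I rewrite the left-hand side via the classical variance identity
\[
\sum_{x \in B_n} (g(x) - m_n(g))^2 = \frac{1}{2|B_n|} \sum_{x,y \in B_n} (g(x)-g(y))^2,
\]
so that the problem reduces to comparing $\sum_{x,y \in B_n}(g(x)-g(y))^2$ with $|B_n|\, n^2 \sum_{x \sim y}(g(y)-g(x))^2$.

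Next, for each ordered pair $(x,y) \in B_n \times B_n$, I attach a canonical path $\gamma_{x,y}$ inside $B_n$ from $x$ to $y$ that modifies the coordinates one after the other (first coordinate $1$, then $2$, \ldots, then $d$). Such a path has at most $2dn$ edges, so Cauchy--Schwarz yields
\[
(g(x)-g(y))^2 \le 2dn \sum_{e \in \gamma_{x,y}} |\grad g(e)|^2,
\]
where $|\grad g(e)|^2 = (g(u)-g(v))^2$ for $e = \{u,v\}$. The remaining ingredient is a congestion estimate: I fix an oriented edge $e$ of direction $i$ in $B_n$ and count the pairs $(x,y)$ whose canonical path uses $e$. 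A direct inspection shows that the coordinates $j<i$ of $y$ and the coordinates $j>i$ of $x$ are forced, while the $i$-th coordinates $(x_i,y_i)$ must lie on opposite sides of $e$; the remaining $2(d-1)$ coordinates each range over $\{-n,\ldots,n\}$. This produces at most $C\, n \,|B_n|$ pairs per edge, for a constant $C$ depending only on $d$. Summing the Cauchy--Schwarz bound over $(x,y)$ and exchanging the order of summation,
\[
\sum_{x,y \in B_n} (g(x)-g(y))^2 \le 2dn \cdot C n |B_n| \sum_{\substack{x,y \in B_n \\ x \sim y}} (g(y)-g(x))^2,
\]
and plugging this into the variance identity gives the claim with $C_S$ depending only on $d$.

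The only genuine obstacle is the congestion count; it is elementary but has to be done with the right choice of canonical paths so that the bound is of order $n|B_n|$ rather than something worse. An alternative route, which avoids the counting entirely, is tensorization: one starts from the classical one-dimensional spectral gap of order $1/n^2$ on the path $\{-n,\ldots,n\}$ with uniform measure, and iterates $d$ times using the fact that variance on a product measure is bounded by the sum of the conditional variances in each coordinate. Either method yields the stated inequality.
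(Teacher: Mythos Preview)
Your proof is correct. The canonical-path argument is carried out properly: the variance identity, the Cauchy--Schwarz step along paths of length at most $2dn$, and the congestion count of order $n|B_n|$ pairs per edge are all right, and they combine to give exactly the bound $C_S n^2$ with a dimension-dependent constant. Your alternative via tensorization of the one-dimensional gap would also work and is arguably the cleanest route.

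The paper, however, takes a different approach: it simply cites a lower bound on the isoperimetric (Cheeger) constant of $B_n$ from Saloff-Coste's lecture notes and then invokes Cheeger's inequality to convert this into a spectral gap bound. That route is shorter to write down (two references and done) but is less self-contained and typically yields a worse constant, since Cheeger's inequality loses a square. Your canonical-path method is more hands-on and gives a constant you can track explicitly; it also generalizes straightforwardly to other graphs where isoperimetry may be harder to compute. Both are entirely standard, and for the purposes of this paper---where only the order $n^2$ matters---either is perfectly adequate.
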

\begin{proof}
A lower bound on the isoperimetric constant is given by \cite[Theorem 3.3.9]{SC}, which implies a lower bound on the spectral gap via Cheeger's inequality (\cite[Lemma 3.3.7]{SC}). 
\end{proof}
\begin{prop}
\label{seminash}
For any function $f \in L^2(\P)$ and any $n \in \N$, we have :
$$
\E[f(\omega)^2] \le C_S n^2 \cE(f,f) + \frac{2}{|B_n|^2} \E[S_n(f)^2],
$$
where $S_n(f)$ was defined in (\ref{defSn}).
Moreover, the same inequality holds with $\cE$ replaced by $\cE^\circ$.
\end{prop}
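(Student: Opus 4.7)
The plan is to apply the finite-volume spectral gap of Proposition \ref{specgap} to a natural ``snapshot'' of $f$ inside $B_n$, and then take the $\P$-expectation, letting translation invariance reduce everything to quantities one recognises.

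More precisely, fix $\omega$ and apply Proposition \ref{specgap} to the function $g_\omega : B_n \to \R$ defined by $g_\omega(x) = f(\theta_x \omega)$. The empirical mean is
$$
m_n(g_\omega) = \frac{1}{|B_n|}\sum_{x\in B_n} f(\theta_x\omega) = \frac{S_n(f)(\omega)}{|B_n|},
$$
and the elementary identity $\sum_{x\in B_n} (g_\omega(x) - m_n(g_\omega))^2 = \sum_{x\in B_n} g_\omega(x)^2 - |B_n|\,m_n(g_\omega)^2$ rewrites the conclusion of Proposition \ref{specgap} as
$$
\sum_{x\in B_n} f(\theta_x\omega)^2 \le \frac{C_S n^2}{4}\sum_{\substack{x,y\in B_n\\ x\sim y}} \bigl(f(\theta_y\omega)-f(\theta_x\omega)\bigr)^2 + \frac{1}{|B_n|}S_n(f)(\omega)^2.
$$

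Next I would take the $\P$-expectation of both sides. Translation invariance of $\P$ turns $\E[f(\theta_x\omega)^2]$ into $\E[f(\omega)^2]$, so the LHS becomes $|B_n|\E[f(\omega)^2]$; similarly, for each edge $\{x,y\}\subset B_n$ with $y-x=z$ ($|z|=1$),
$$
\E\bigl[(f(\theta_y\omega)-f(\theta_x\omega))^2\bigr]=\E\bigl[(f(\theta_z\omega)-f(\omega))^2\bigr].
$$
Counting edges in $B_n$ in each of the $2d$ unit directions gives at most $|B_n|$ edges per direction, so the double sum in the Dirichlet term is bounded by $|B_n|\cE^\circ(f,f)$. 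Dividing through by $|B_n|$ yields the ``moreover'' statement with $\cE^\circ$; since the uniform lower bound $\omega_e\ge 1$ forces $\cE^\circ(f,f)\le \cE(f,f)$, the version with $\cE$ follows immediately. The loose constants $C_S$ and $2$ in the stated inequality absorb the computed $C_S/4$ and $1$ without any trouble.

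I do not anticipate any serious obstacle: the only things to keep careful track of are the edge-counting bookkeeping (unordered versus ordered pairs, and the fact that not every $x\in B_n$ has $x+z\in B_n$) and the trivial domination of $\cE^\circ$ by $\cE$. The real content is that the finite-box spectral gap, applied pointwise in $\omega$, automatically produces a ``Nash-type'' inequality on $L^2(\P)$ as soon as one accepts a correction term governed by the empirical averages $S_n(f)$; the $n^2$ factor is the hallmark of the spectral gap, while the $1/|B_n|^2$ factor in front of $\E[S_n(f)^2]$ is exactly what is needed so that the correction becomes small for functions with mean zero and finite $\mclN(f)$.
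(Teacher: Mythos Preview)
Your argument is correct and essentially identical to the paper's: apply the finite-box spectral gap pointwise in $\omega$ to $g_\omega(x)=f(\theta_x\omega)$, take $\P$-expectation, and use translation invariance together with the edge-counting bound to obtain the $\cE^\circ$ version, then the domination $\cE^\circ\le\cE$. The only cosmetic difference is that you use the exact identity $\sum_x(g_\omega(x)-m_n(g_\omega))^2=\sum_x g_\omega(x)^2-|B_n|\,m_n(g_\omega)^2$, whereas the paper uses the cruder bound $g_\omega(x)^2\le 2(g_\omega(x)-m_n(g_\omega))^2+2m_n(g_\omega)^2$; your route is slightly sharper (yielding $C_S/2$ and $1$ in place of $C_S$ and $2$), but otherwise the proofs coincide.
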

\begin{proof}
As we assumed that $\omega \ge 1$, it is clear that $\cE(f,f) \ge \cE^\circ(f,f)$, so it is enough to show the claim for $\cE^\circ$.

Let $g(x) = f(\theta_x \ \omega)$.
Using the translation invariance of $\P$ and the fact that $(a+b)^2 \le 2a^2 + 2b^2$, we have~:
$$
\E[f^2] = \E[g(x)^2] \le 2\E[(g(x) - m_n(g))^2] + 2\E[m_n(g)^2].
$$
Summing this inequality over all $x \in B_n$, and using Proposition \ref{specgap}, we obtain :
$$
|B_n| \E[f^2] \le \frac{C_S}{2} n^2 \sum_{\substack{x,y \in B_n \\ x \sim y}} \E[(f(\theta_y \ \omega)-f(\theta_x \ \omega))^2] + 2 |B_n|\E[m_n(g)^2].
$$
Note that 
$$
\sum_{\substack{x,y \in B_n \\ x \sim y}} \E[(f(\theta_y \ \omega)-f(\theta_x \ \omega))^2] \le \sum_{x \in B_n} \sum_{\substack{y \in \Z^d \\ y \sim x}} \E[(f(\theta_y \ \omega)-f(\theta_x \ \omega))^2].
$$
As $\P$ is invariant under translation, the sum
$$
\sum_{\substack{y \in \Z^d \\ y \sim x}} \E[(f(\theta_y \ \omega)-f(\theta_x \ \omega))^2]
$$ 
is in fact independent of $x$, and it comes that :
$$
|B_n| \E[f^2] \le \frac{C_S}{2} n^2 |B_n| \sum_{|z| = 1} \E\left[(f(\theta_z \ \omega) - f(\omega))^2\right] + 2 |B_n|\E[m_n(g)^2].
$$
We get the result dividing by $|B_n|$ and noting that $m_n(g) = S_n(f)/|B_n|$.
\end{proof}
We need to modify slightly $\mclN(f_t)$, to ensure that it does not become too small when $t$ goes to infinity. We define $\mclN'(f_t) = \max(\mclN(f_t),\|f\|_2^2)$.
\begin{prop}[Nash inequality]
\label{nash}
There exists $C > 0$ such that for any $f \in L^2(\P)$ and any $t \ge 1$~:
$$
\E[(f_t)^2] \le C \cE(f_t,f_t)^{d/(d+2)} \mclN'(f_t)^{2/(d+2)},
$$
and the same inequality holds with $f_t$, $\cE$ replaced by $f^\circ_t$, $\cE^\circ$.
\end{prop}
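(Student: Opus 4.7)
The plan is to apply Proposition~\ref{seminash} to $f_t$ and to optimise the free parameter $n$ so that the two terms balance. Starting from that proposition,
$$\E[(f_t)^2] \le C_S\, n^2\, \cE(f_t,f_t) + \frac{2}{|B_n|^2}\, \E[S_n(f_t)^2].$$
By the very definition of $\mclN$, $\E[S_n(f_t)^2] \le |B_n|\, \mclN(f_t) \le |B_n|\, \mclN'(f_t)$, and since $|B_n| = (2n+1)^d \ge (2n)^d$ for $n \ge 1$, this reduces to
$$\E[(f_t)^2] \le C_S\, n^2\, \cE(f_t,f_t) + \frac{C\, \mclN'(f_t)}{n^d},$$
uniformly in the integer $n \ge 1$ and in $f$.

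The real-valued minimiser is $n_* = (\mclN'(f_t)/\cE(f_t,f_t))^{1/(d+2)}$, which balances the two terms at the value $\cE(f_t,f_t)^{d/(d+2)}\, \mclN'(f_t)^{2/(d+2)}$ that the proposition asks for. When $n_* \ge 1$, I would simply set $n := \lceil n_* \rceil$; since $\lceil n_* \rceil \le 2 n_*$ in this range, both terms of the bound are at most a constant multiple of $\cE(f_t,f_t)^{d/(d+2)}\, \mclN'(f_t)^{2/(d+2)}$, and the claim follows.

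The only case this optimisation does not reach is $n_* < 1$, that is, $\mclN'(f_t) < \cE(f_t,f_t)$. Here I would invoke contractivity of the semigroup in $L^2(\P)$, giving $\E[(f_t)^2] \le \|f\|_2^2 \le \mclN'(f_t)$, and then
$$\mclN'(f_t) = \mclN'(f_t)^{2/(d+2)}\, \mclN'(f_t)^{d/(d+2)} \le \mclN'(f_t)^{2/(d+2)}\, \cE(f_t,f_t)^{d/(d+2)},$$
closing this case with constant one. Since Proposition~\ref{seminash} also holds with $\cE$ replaced by $\cE^\circ$, the same argument applies verbatim to $f^\circ_t$. The only thing to be careful about is the integer constraint on $n$; there is no genuine analytic obstacle.
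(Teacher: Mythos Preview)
Your proof is correct and follows the same strategy as the paper: apply Proposition~\ref{seminash} to $f_t$, bound the second term using $\mclN'(f_t)$, and optimise over the integer $n$. The only point of departure is how the integer constraint is resolved. The paper invokes Lemma~\ref{decroissdirichl}, namely $\cE(f_t,f_t) \le \|f\|_2^2/(2et)$, to show that the real optimiser $w$ satisfies $w \ge 1$ whenever $t \ge 1$, so that $\lfloor w \rfloor$ is a legitimate choice and the awkward case never occurs. You instead split into cases and dispose of $n_* < 1$ by the direct inequality $\E[(f_t)^2] \le \|f\|_2^2 \le \mclN'(f_t) \le \mclN'(f_t)^{2/(d+2)}\cE(f_t,f_t)^{d/(d+2)}$. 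Your treatment is slightly more elementary (it avoids the spectral bound on the Dirichlet form) and in fact yields the Nash inequality for all $t \ge 0$, not just $t \ge 1$; the paper's route, on the other hand, explains why the restriction $t \ge 1$ appears in the statement.
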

We first prove the following classical result.
\begin{lem}
\label{decroissdirichl}
For any $f \in L^2(\P)$ and any $t > 0$, we have
\begin{equation*}
\cE^\circ(f_t,f_t) \le \cE(f_t,f_t) \le \frac{1}{2 e t} \|f\|_2^2.
\end{equation*}
\end{lem}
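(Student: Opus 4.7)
The first inequality is immediate from our standing assumption $\omega_e \ge 1$. Indeed, comparing the two Dirichlet forms term by term,
$$
\cE(f_t,f_t) = \frac{1}{2}\sum_{|z|=1}\E\!\left[\omega_{0,z}(f_t(\theta_z\omega)-f_t(\omega))^2\right] \ge \frac{1}{2}\sum_{|z|=1}\E\!\left[(f_t(\theta_z\omega)-f_t(\omega))^2\right] = \cE^\circ(f_t,f_t).
$$

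For the second (and main) inequality, I would use the spectral decomposition of the non-negative self-adjoint operator $-\L$ on $L^2(\P)$. With the spectral measure $e_f$ introduced in (\ref{defef}), one has $\|f\|_2^2 = \int \d e_f(\lambda)$, and the functional calculus gives $f_t = e^{t\L}f$ with
$$
\cE(f_t,f_t) = \E[(-\L)f_t \cdot f_t] = \int \lambda\, e^{-2\lambda t}\, \d e_f(\lambda).
$$
An elementary calculus exercise shows that $\lambda \mapsto \lambda e^{-2\lambda t}$ attains its maximum over $[0,+\infty)$ at $\lambda = 1/(2t)$, with maximal value $1/(2et)$. Hence
$$
\cE(f_t,f_t) \le \frac{1}{2et}\int \d e_f(\lambda) = \frac{1}{2et}\|f\|_2^2.
$$

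There is essentially no obstacle here: the argument is entirely standard once one recognizes that $(\omega(t))$ is a reversible, hence $\P$-self-adjoint, Markov semigroup. An alternative route, should one wish to avoid invoking the spectral theorem directly, is to write the differential inequality $\frac{\d}{\d t}\cE(f_t,f_t) = -2\|\L f_t\|_2^2$ and combine it with Cauchy--Schwarz $\cE(f_t,f_t)^2 \le \|\L f_t\|_2^2\,\|f_t\|_2^2$ and the decay $\|f_t\|_2 \le \|f\|_2$; integrating the resulting ODE yields the weaker bound $\cE(f_t,f_t) \le \|f\|_2^2/(2t)$, which loses the factor $e^{-1}$ but would already be sufficient for the subsequent use of the lemma.
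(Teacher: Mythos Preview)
Your proof is correct and follows essentially the same approach as the paper: the first inequality from $\omega_e\ge 1$, and the second from the spectral representation $\cE(f_t,f_t)=\int \lambda e^{-2\lambda t}\,\d e_f(\lambda)$ combined with the elementary bound $\lambda e^{-2\lambda t}\le 1/(2et)$. Your additional remark about the ODE route is a nice aside but unnecessary here.
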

\begin{proof}
As was already mentioned, the first inequality is clear due to the assumption that $\omega \ge 1$. 

The Dirichlet form $\cE(f_t,f_t)$ can be expressed in terms of the spectral measure $e_f$ as (see (\ref{defef}) for the definition of $e_f$)~:
$$
\cE(f_t,f_t) = \int \lambda e^{-2 \lambda t} \d e_f(\lambda).
$$
Noting that for any $x \ge 0$, we have $x e^{-x} \le 1/e$, we obtain that $\lambda e^{-2 \lambda t} \le 1/(2 e t)$, and the result follows. 
\end{proof}
\begin{proof}[Proof of Proposition~\ref{nash}]
Using the fact that $|B_n| \ge 2 n^d$, together with the definition of $\mclN(f)$ given in (\ref{defmclN}), Proposition \ref{seminash} gives
\begin{equation}
\label{presqnash}
\E[(f_t)^2] \le C_S n^2 \cE(f_t,f_t) + n^{-d} \mclN'(f_t).
\end{equation}
Let $w$ be the positive real number satisfying
$$
w^{d+2} = \frac{\mclN'(f_t)}{2 e \cE(f_t,f_t)}.
$$
Using the fact that $\mclN'(f_t) \ge \|f\|_2^2$, we know by Lemma~\ref{decroissdirichl} that $w \ge 1$ whenever $t \ge 1$. In particular, the integer part $\lfloor w \rfloor$ satisfies $w/2 \le \lfloor w \rfloor \le w$. Taking $n = \lfloor w \rfloor$ in equation (\ref{presqnash}) (and $t \ge 1$), we obtain
$$
\E[(f_t)^2] \le C \cE(f_t,f_t)^{d/(d+2)} \mclN'(f_t)^{2/(d+2)},
$$
with
$$
C = C_S (2e)^{-2/(d+2)} + 2^d (2e)^{d/(d+2)}.
$$
\end{proof}
\begin{prop}
\label{decaygeneral}
There exists $C > 0$ such that for any $f \in L^2(\P)$ and any $t \ge 1$~:
$$
\E[f_t^2] \le C\left( \int_1^t \mclN'(f_s)^{-2/d} \ \d s \right)^{-d/2},
$$
and the same inequality holds with $f_t, f_s$ replaced by $f^\circ_t, f^\circ_s$.
\end{prop}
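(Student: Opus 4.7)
The plan is to run the classical ODE argument associated with a Nash inequality, feeding in the bound from Proposition~\ref{nash} and integrating. Let $u(t) = \E[(f_t)^2]$. Using the spectral representation $u(t) = \int e^{-2\lambda t} \, \d e_f(\lambda)$, one sees that $u$ is differentiable on $(0,+\infty)$ with
$$
u'(t) = -2\int \lambda e^{-2\lambda t} \, \d e_f(\lambda) = -2\, \cE(f_t,f_t).
$$

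Now raise the Nash inequality of Proposition~\ref{nash} to the power $(d+2)/d$: there exists $c > 0$ such that for $t \ge 1$,
$$
u(t)^{(d+2)/d} \le c \, \cE(f_t,f_t) \, \mclN'(f_t)^{2/d},
$$
which, combined with the identity for $u'(t)$, yields
$$
u'(t) \le -\frac{2}{c}\, u(t)^{(d+2)/d}\, \mclN'(f_t)^{-2/d}.
$$
Assuming $u(t) > 0$ (otherwise the conclusion is trivial), divide both sides by $u(t)^{(d+2)/d}$ and observe that the left-hand side is (up to the factor $-d/2$) the derivative of $u(t)^{-2/d}$:
$$
\frac{\d}{\d t}\Big( u(t)^{-2/d} \Big) = -\frac{2}{d}\, u(t)^{-(d+2)/d} u'(t) \ge \frac{4}{cd}\, \mclN'(f_t)^{-2/d}.
$$

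Integrating this inequality from $1$ to $t$ gives
$$
u(t)^{-2/d} \ge u(1)^{-2/d} + \frac{4}{cd} \int_1^t \mclN'(f_s)^{-2/d} \, \d s \ge \frac{4}{cd} \int_1^t \mclN'(f_s)^{-2/d} \, \d s,
$$
since $u(1)^{-2/d} \ge 0$. Raising to the power $-d/2$ produces the announced bound
$$
\E[(f_t)^2] = u(t) \le C \left( \int_1^t \mclN'(f_s)^{-2/d} \, \d s \right)^{-d/2},
$$
with $C = (cd/4)^{d/2}$. The argument for the simple random walk is identical: Proposition~\ref{nash} provides the same Nash inequality with $\cE^\circ$ in place of $\cE$, and $\frac{\d}{\d t}\|f_t^\circ\|_2^2 = -2\cE^\circ(f_t^\circ, f_t^\circ)$ by the same spectral argument.

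There is no real obstacle: the only subtlety is the justification of $u'(t) = -2\cE(f_t,f_t)$, which follows from the spectral theorem applied to the self-adjoint generator $\L$. The use of $\mclN'$ rather than $\mclN$ is what lets us absorb $u(1)^{-2/d}$ trivially and avoids any worry about $\mclN(f_t)$ degenerating as $t$ grows; this is precisely why $\mclN'$ was introduced just before Proposition~\ref{nash}.
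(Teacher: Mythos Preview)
Your proof is correct and follows essentially the same route as the paper: both use $\partial_t \E[f_t^2] = -2\cE(f_t,f_t)$, rewrite the Nash inequality of Proposition~\ref{nash} as a differential inequality for $\big(\E[f_t^2]\big)^{-2/d}$, and integrate from $1$ to $t$. Your write-up is slightly more detailed (handling the trivial case $u(t)=0$ and commenting on the role of $\mclN'$), but the argument is the same.
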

\begin{proof}
Noting that $\partial_t \E[f_t^2] = - 2 \cE(f_t,f_t)$, the inequality obtained in Proposition~\ref{nash} becomes a differential inequality~:
$$
\cE(f_t,f_t) (\E[f_t^2])^{-(1+2/d)} = \frac{d}{4} \partial_t \big[(\E[f_t^2])^{-2/d}\big]  \ge C^{-(1+2/d)} \mclN'(f_t)^{-2/d}.
$$
Integrating this inequality, we are led to~:
$$
(\E[f_t^2])^{-2/d} \ge \frac{4}{d} C^{-(1+2/d)} \int_1^t \mclN'(f_s)^{-2/d} \ \d s ,
$$
which shows the proposition for $f_t$. The same proof applies to $f^\circ_t$ as well.
\end{proof}

%
%
%%%%%%%%%%%%%%%%%%%%%%%%%%%%%%%%%%%%%%%%%%%%%%%%%%%%%%%%%%%%%%
%%%%%%%%%%      Simple random walk's case     %%%%%%%%%%%%%%%%
%%%%%%%%%%%%%%%%%%%%%%%%%%%%%%%%%%%%%%%%%%%%%%%%%%%%%%%%%%%%%%
%
%
\section{Variance decay for the simple random walk}
\label{s:simpleRW}
\setcounter{equation}{0}
\begin{prop}
\label{contract}
For any integer $n$ and any $f \in L^2(\P)$, the function
$$
t \mapsto \E[(S_n(f^\circ_t))^2]
$$
is decreasing. 
On the other hand, there exist a law $\P$ satisfying $(\mathfrak{A})$, a bounded function $f : \Omega \to \R$ that depends on a finite number of coordinates, and an integer $n$ such that the function
$$
t \mapsto \E[(S_n(f_t))^2]
$$
is not decreasing.
\end{prop}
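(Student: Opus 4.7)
The plan splits naturally into two parts, corresponding to the two claims.

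For Part 1, I would exploit the fact that the simple random walk's generator $\L^\circ$ has constant (non-random) jump rates and therefore commutes with every translation $\theta_x$. Consequently the semigroup $e^{t\L^\circ}$ commutes with each $\theta_x$, and hence with the operator $S_n$, which is a finite sum of translates. This yields the commutation identity $S_n(f^\circ_t) = (S_n(f))^\circ_t$, so $\E[(S_n(f^\circ_t))^2] = \|e^{t\L^\circ}(S_n(f))\|_{L^2(\P)}^2$, which is non-increasing by the standard derivative computation $\frac{d}{dt}\|e^{t\L^\circ}h\|_{L^2(\P)}^2 = -2\,\cE^\circ(e^{t\L^\circ}h, e^{t\L^\circ}h) \le 0$ valid for any $h \in L^2(\P)$.

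For Part 2, the starting point is the observation that for the random walk in random environment the generator $\L$ carries the random rates $\omega_{x,x+z}$ and does \emph{not} commute with translations; this is what must be exploited. I would first compute
\begin{equation*}
\frac{d}{dt}\E[(S_n(f_t))^2]\Big|_{t=0} = 2\,\E\bigl[S_n(f)\cdot S_n(\L f)\bigr],
\end{equation*}
and simplify $S_n(\L f)$ by summing $\L f(\theta_x\omega) = \sum_{|z|=1}\omega_{x,x+z}(f(\theta_{x+z}\omega) - f(\theta_x\omega))$ over $x \in B_n$. Every directed edge with both endpoints in $B_n$ appears twice with opposite signs and cancels (by $\omega_{x,y} = \omega_{y,x}$), leaving the pure boundary expression
\begin{equation*}
S_n(\L f)(\omega) = \sum_{\substack{x \in B_n,\ y \sim x\\ y \notin B_n}} \omega_{x,y}\bigl(f(\theta_y\omega) - f(\theta_x\omega)\bigr).
\end{equation*}
The counterexample would then be an explicit choice of an i.i.d.\ two-valued law $\P$ concentrated on $\{1, K\}$ (which satisfies $(\mathfrak{A})$), a bounded local $f$ and an integer $n$, for which the boundary expectation $\E[S_n(f)\,S_n(\L f)]$ is strictly positive. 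Independence of the edge variables reduces this expectation to a polynomial in the first few moments of $\omega_e$, which can be checked directly.

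The \textbf{main obstacle} is exhibiting such a triple. For any $f$ that depends on a single edge $\omega_{0,e_i}$, a direct symmetrization of the boundary expression rewrites it as $-\tfrac{1}{2}\E[(\omega_e + \omega_{e'})(g(\omega_e) - g(\omega_{e'}))^2] \le 0$, so the simplest candidates do not work. I therefore expect the counterexample to require either a multi-edge local $f$ with asymmetric support, or alternatively the derivative to be computed at some positive time $t > 0$ at which $f_t$ has acquired environment-dependent correlations that break this Cauchy--Schwarz-type obstruction. In either case, once a candidate $(\P, f, n)$ is fixed the verification that $t \mapsto \E[(S_n(f_t))^2]$ increases on some interval reduces to a finite, fully explicit computation.
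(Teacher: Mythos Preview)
Your treatment of the first claim is correct and coincides with the paper's: the commutation $S_n(f^\circ_t)=(S_n f)^\circ_t$ is exactly what the paper invokes, followed by the same derivative computation $\partial_t\|(S_nf)^\circ_t\|_2^2=-2\,\cE^\circ((S_nf)^\circ_t,(S_nf)^\circ_t)\le 0$.

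For the second claim your strategy---differentiate at $t=0$ and reduce $\E[S_n(f)\,S_n(\L f)]$ to a boundary expression---is also the paper's, and your diagnosis that single-edge functions yield a non-positive derivative is correct. The gap is that the proposal stops where the real work begins: you have not produced a triple $(\P,f,n)$, and the expectation that a two-valued law on $\{1,K\}$ will suffice is unverified. The paper takes $d=1$, $n=1$, and the two-edge function
\[
f(\omega)=\omega_{-1,0}+(\omega_{2,3})^2,
\]
the quadratic term being what injects a fourth moment $\mu_4=\E[\omega_e^4]$ into the resulting polynomial in the $\mu_i$. The derivative is then bounded below by $\mu_4(1-2\mu_1)-\mu_3-\mu_2^2$, and the law is chosen to make $\mu_4$ dominate: initially a mixture $(1-p)\delta_0+p\cdot(\text{Pareto density with tail exponent }4+\eps)$ with $p=1/4$ and $\eps$ small. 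This law does \emph{not} satisfy $\omega\ge 1$; it is repaired a posteriori by a scaling $\omega'=\omega/\lambda$ (with $f$ adjusted accordingly) and a truncation to land in $(\mathfrak{A})$. Note that the mechanism hinges on $\mu_1<1/2$ before rescaling, which is impossible for any law supported in $[1,\infty)$; your candidate two-valued law on $\{1,K\}$ has $\mu_1\ge 1$, so at least this particular route is closed to it. Your intuition that a multi-edge asymmetric $f$ is required is right, but the actual construction is delicate and your proposal leaves it undone.
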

\begin{proof}
When the walk is independent from the environment, environment translations and time evolution commute, in the sense that~:
\begin{equation}
\label{commute}
\E[S_n(f^\circ_t)^2] = \E\left[\big((S_n(f))^\circ_t\big)^2\right],
\end{equation}
and as a consequence~:
$$
\partial_t \E[S_n(f^\circ_t)^2] = - 2 \cE_0((S_n(f))^\circ_t,(S_n(f))^\circ_t) \le 0.
$$

The commutation property (\ref{commute}) no longer holds for the random walk on random conductances $(X_t)$. We construct an example showing that $t \to \E[S_n(f_t)^2]$ may increase. We fix $d = 1$, $n = 1$. Computing the derivative at $t=0$, we have~:
\begin{equation*}
\frac{1}{2}\left(\partial_t \E[S_1(f_t)^2]\right)_{|t=0} = \E[ (S_1(\L f)) (\omega) S_1(f)(\omega)],
\end{equation*}
and we obtain~:
\begin{multline*}
\frac{1}{2}\left(\partial_t \E[S_1(f_t)^2]\right)_{|t=0} = \E\big[ \{ \omega_{1,2}(f(\theta_2 \ \omega) - f(\theta_1 \ \omega)) + \omega_{-2,-1} (f(\theta_{-2} \ \omega) - f(\theta_{-1} \ \omega)) \} \\
(f(\theta_{-1} \ \omega) + f(\omega) + f(\theta_1 \ \omega)) \big].
\end{multline*}
We choose $f(\omega) = \omega_{-1,0} + (\omega_{2,3})^2$. Writing $\mu_i$ for $\E[(\omega_{0,1})^i]$, a computation shows that the latter is equal to~:
\begin{multline*}
\mu_1 \mu_2 - \mu_3 + \mu_4 - (\mu_2)^2 + 2 \mu_1 (\mu_2)^2 - 2 \mu_1 \mu_4
\ge \mu_4 (1-2 \mu_1) - \mu_3 - (\mu_2)^2 .
\end{multline*}
Letting $\eps > 0$ and $p \in [0,1]$, we choose the following law for $\omega_{0,1}$~:
$$
(1-p) \delta_0 + p (4+\eps) x^{-(5+\eps)} \1_{\{x \ge 1\}} \ \d x .
$$
Then, for $i \le 4$~:
$$
\mu_i = p \frac{4+\eps}{4+\eps-i} .
$$ 
If we choose $p = 1/4$, then $\mu_1 \le 1/3$ and it comes that~:
$$
\mu_4 (1-2 \mu_1) - \mu_3 - (\mu_2)^2 \ge \frac{1}{3} \mu_4 - \mu_3 - (\mu_2)^2 .
$$
Now taking $\eps$ close enough to $0$, $\mu_4$ becomes the dominant term, making the last expression strictly positive.

Yet, this example is inappropriate as it does not satisfy the uniform lower bound $\omega \ge 1$. It is clear that the law can be modified so that its support lies in $[\lambda , + \infty)$ for some $\lambda > 0$. Then changing $\omega$ for $\omega' = \omega/\lambda$, and taking $f'$ so that $f'(\omega') = f(\omega)$ completes this requirement. In a similar way, one can ensure that the support of $\omega$ is bounded, so that $\P$ satisfies ($\mathfrak{A}$), in which case $f'$ becomes bounded.
\end{proof}

\begin{proof}[Proof of Theorem~\ref{main:indep}]
Let $f : \Omega \to \R$ be such that $\mclN(f)$ is finite. Note first that for any $t \ge 0$, we have by Jensen's inequality $\E[(f^\circ_t)^2] \le \|f\|_2^2 \le \mclN(f)$. Then according to Proposition~\ref{contract}, for any $s \ge 0$~:
$$
\mathcal{N}(f^\circ_s) \le \mclN(f).
$$
Proposition \ref{decaygeneral} now gives that for any $t \ge 1$~:
$$
\E[(f^\circ_t)^2] \le C \mclN(f) (t-1)^{-d/2}.
$$
\end{proof}

%
%
%%%%%%%%%%%%%%%%%%%%%%%%%%%%%%%%%%%%%%%%%%%%%%%%%%%%%%%%%%%%%%
%%%%%%%%%%    Comparison betw the two walks   %%%%%%%%%%%%%%%%
%%%%%%%%%%%%%%%%%%%%%%%%%%%%%%%%%%%%%%%%%%%%%%%%%%%%%%%%%%%%%%
%
%
\section{Resolvents comparison}
\label{s:resolvents}
\setcounter{equation}{0}

For any $\mu > 0$ and any $f \in L^2(\P)$, we define the resolvents as
\begin{equation}
\label{def:Rmu}
R_\mu f = (-\L + \mu)^{-1} f, \qquad R^\circ_\mu f = (-\L^\circ + \mu)^{-1} f.
\end{equation}
We write $(\cdot,\cdot)$ for the scalar product in $L^2(\P)$. We begin by recalling some classical results about resolvents.
\begin{lem}
\label{lemresolvent}
We have
$$
(R_\mu f, f) = \int_0^{+\infty} e^{-\mu t} \E[(f_{t/2})^2] \ \d t,
$$
and the same equality holds replacing $R_\mu, f_{t/2}$ by $R^\circ_\mu, f^\circ_{t/2}$. Moreover, the following comparison holds
$$
(R_\mu f, f) \le (R^\circ_\mu f, f).
$$
\end{lem}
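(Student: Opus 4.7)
The plan is to prove the lemma in three short steps, treating the two Laplace-transform identities and the resolvent comparison separately. For the first identity, I would rely on the fact that $-\L$ is a non-negative self-adjoint operator on $L^2(\P)$ (by reversibility of $(\omega(t))$ under $\P$) and apply the standard functional-calculus representation $R_\mu = \int_0^{+\infty} e^{-\mu t} e^{t\L} \, \d t$. Taking the scalar product with $f$, using Fubini, and invoking the semigroup property together with self-adjointness yields
$$(e^{t\L} f, f) = (e^{(t/2)\L} f, e^{(t/2)\L} f) = \|f_{t/2}\|_2^2 = \E[(f_{t/2})^2],$$
which is precisely the integrand that appears in the claim. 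The identity for $R^\circ_\mu$ follows from exactly the same computation, applied to the reversible process $(\omega^\circ(t))$ and its generator $\L^\circ$.

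For the comparison $(R_\mu f, f) \le (R^\circ_\mu f, f)$, the essential input is that the assumption $\omega \ge 1$ implies $\cE(g,g) \ge \cE^\circ(g,g)$ for every $g \in L^2(\P)$, which is immediate from the two explicit expressions for the Dirichlet forms given just before Section~\ref{s:specgap}. I would then transfer this pointwise-in-$g$ domination to the resolvents via the variational identity
$$(R_\mu f, f) = \sup_{g \in L^2(\P)} \bigl\{ 2(f, g) - \cE(g,g) - \mu \|g\|_2^2 \bigr\},$$
which follows from the completion of the square $\|(-\L+\mu)^{1/2} g - (-\L+\mu)^{-1/2} f\|_2^2 \ge 0$, with equality when $g = R_\mu f$ (and the understanding that $\cE(g,g) = +\infty$ outside the form domain, so that the supremum may safely be taken over all of $L^2(\P)$). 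The analogous identity holds for $R^\circ_\mu$ with $\cE^\circ$ in place of $\cE$; since a smaller Dirichlet form inside the braces yields a larger supremum, the desired inequality falls out.

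The only (mild) obstacle lies in this last step: the spectral measures of $-\L$ and $-\L^\circ$ projected on $f$ are different objects, so one cannot compare the two quantities by working directly with the spectral representation $\int (\lambda+\mu)^{-1} \, \d e_f(\lambda)$ that is so convenient in Steps~1 and~2. The variational formulation above is precisely what sidesteps this issue, by letting the Dirichlet-form inequality enter uniformly in $g$. Everything else is routine functional calculus.
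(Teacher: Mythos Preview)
Your proof is correct. The Laplace-transform identity is handled essentially as in the paper (the paper phrases it via the spectral measure $e_f$, you via the semigroup, but the content is the same).

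For the comparison $(R_\mu f,f)\le(R^\circ_\mu f,f)$, however, you take a genuinely different route. The paper follows an argument due to Baldi--Lohou\'e--Peyri\`ere: one writes $(R_\mu f,f)=(R_\mu f,(-\L^\circ+\mu)R^\circ_\mu f)$ and applies Cauchy--Schwarz with respect to the positive form $(-\L^\circ+\mu)$, obtaining
\[
(R_\mu f,f)^2 \le (R_\mu f,(-\L^\circ+\mu)R_\mu f)\,(R^\circ_\mu f,f);
\]
the Dirichlet-form domination $\cE^\circ\le\cE$ then bounds the first factor by $(R_\mu f,(-\L+\mu)R_\mu f)=(R_\mu f,f)$, and one divides through. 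Your approach instead uses the variational identity $(R_\mu f,f)=\sup_g\{2(f,g)-\cE(g,g)-\mu\|g\|_2^2\}$, so that the form inequality enters directly inside the supremum. Both arguments are short and rely on exactly the same ingredient ($\cE\ge\cE^\circ$); the variational one is perhaps more transparent and generalizes immediately to any two comparable closed forms, while the Cauchy--Schwarz version avoids any discussion of form domains.
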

\begin{proof}
The spectral theorem gives
\begin{eqnarray*}
(R_\mu f, f) & = & \int \frac{1}{\mu + \lambda} \ \d e_f(\lambda) \\
& = & \int \int_{0}^{+ \infty} e^{-(\mu + \lambda) t} \ \d t \ \d e_f(\lambda) \\
& = & \int_0^{+ \infty} e^{-\mu t} \int e^{- \lambda t} \ \d e_f(\lambda) \ \d t,
\end{eqnarray*}
using Fubini's theorem. The equality comes noting that
$$
\E[(f_{t/2})^2] = (f_{t/2},f_{t/2}) = (f_t,f) = \int e^{- \lambda t} \ \d e_f(\lambda).
$$
The second part of the lemma is due to \cite{blp} (see also \cite[Lemma 2.24]{woe}). We recall the proof here for convenience. Observe that $-\L^\circ + \mu$ defines a positive quadratic form. Hence, applying the Cauchy-Schwarz inequality, we get
\begin{eqnarray}
\label{calculresol}
(R_\mu f,f)^2 & = & (R_\mu f, (-\L^\circ + \mu) R^\circ_\mu f)^2 \notag \\ 
& \le & (R_\mu f, (-\L^\circ + \mu) R_\mu f) \ (R^\circ_\mu f, (-\L^\circ + \mu) R^\circ_\mu f).
\end{eqnarray}
Note that $(R_\mu f, -\L^\circ R_\mu f)$ is the Dirichlet form $\cE^\circ(R_\mu f,R_\mu f)$, which, as we saw before, is smaller than $\cE(R_\mu f,R_\mu f) = (R_\mu f,- \L  R_\mu f)$, thus leading to
$$
(R_\mu f, (-\L^\circ + \mu) R_\mu f) \le (R_\mu f, (-\L + \mu) R_\mu f) = (R_\mu f, f) .
$$
Using this result in equation~(\ref{calculresol}), one obtains
$$
(R_\mu f,f)^2 \le (R_\mu f, f) \ (R^\circ_\mu f, f),
$$
which proves the result.
\end{proof}

\begin{proof}[Proof of Theorem \ref{main:d<}]
As $\partial_t \E[(f_t)^2] = -2 \cE(f_t,f_t)$ is negative, the function $t \mapsto \E[(f_t)^2]$ is decreasing, so we have, for any $\mu > 0$~:
\begin{eqnarray*}
\E[(f_{t/2})^2] & \le & \frac{\int_0^t e^{-\mu s} \E[(f_{s/2})^2] \ \d s}{\int_0^t e^{-\mu s} \ \d s}  \\
& \le & \mu \frac{(R_\mu f,f)}{1-e^{-\mu t}}.
\end{eqnarray*}
Using Lemma \ref{lemresolvent}, we obtain~:
\begin{equation}
\label{compftR}
\E[(f_{t/2})^2] \le \mu \frac{(R^\circ_\mu f,f)}{1-e^{-\mu t}}.
\end{equation}

For $d=1$, Theorem \ref{main:indep} implies that there exists $C > 0$ such that for any $f$ with $\mclN(f)$ finite~:
\begin{eqnarray*}
(R^\circ_\mu f, f) & \le & C \mclN(f) \int_0^{+ \infty} \sqrt{\frac{2}{t}} e^{-\mu t} \ \d t \\
& \le & \frac{\sqrt{2} C \mclN(f)}{\sqrt{\mu}} \int_0^{+ \infty} \frac{e^{-u}}{\sqrt{u}} \ \d u.
\end{eqnarray*}
Using this bound in equation (\ref{compftR}), we get~:
$$
\E[(f_{t/2})^2] \le \frac{C \sqrt{\mu} \mclN(f)}{1-e^{-\mu t}}.
$$
Choosing $\mu = 1/t$ in the last expression gives the desired result.

For $d=2$, Theorem \ref{main:indep} implies that 
\begin{eqnarray*}
(R^\circ_\mu f, f) & \le & \|f\|_2^2 + C \mclN(f) \int_1^{+ \infty} \frac{2 e^{-\mu t}}{t} \ \d t \\
& \le & \|f\|_2^2 + C \mclN(f) \int_\mu^{+ \infty} {2 e^{-u}} \ \frac{\d u}{u}.
\end{eqnarray*}
Assuming that $\mu \le 1$ (and using the fact that $\mclN(f) \ge \|f\|_2^2$), it comes that
$$
(R^\circ_\mu f, f) \le \|f\|_2^2 + C \mclN(f) \left(C + \int_\mu^1 \frac{2 \d u}{u}\right) \le C \mclN(f) (1 + \ln(1/\mu)).
$$
By equation (\ref{compftR}), it comes that, for any $\mu \le 1$~:
$$
\E[(f_{t/2})^2] \le \mu C \mclN(f) \frac{1+ \ln(1/\mu)}{1-e^{- \mu t}}.
$$
The result comes choosing $\mu = 1/t$, whenever $t \ge 1$. As is always the case, $\E[(f_{t/2})^2]$ is controlled for smaller times by the bound $\E[(f_{t/2})^2] \le \|f\|_2^2 \le \mclN(f)$.

In larger dimension ($d \ge 3$), $(R^\circ_\mu f,f)$ remains bounded as $\mu$ goes to 0, and choosing $\mu = 1/t$ in equation (\ref{compftR}) gives the desired upper bound. Using the comparison lemma once again and Theorem~\ref{main:indep}, we also obtain~:
\begin{eqnarray*}
%\label{condkipvarobtenue}
\int_0^{+ \infty} \E[(f_t)^2] \ \d t & \le & \int_0^{+ \infty} \E[(f^\circ_t)^2] \ \d t \\
& \le &  \int_0^{+ \infty} \frac{C \mclN(f)}{\max(1,t^{d/2})} \ \d t \\
& \le & C \mclN(f).
\end{eqnarray*}
\end{proof}

In the next section, we focus on finding upper bounds for $\E[S_n(f_t)^2]$. 
%
%
%
%%%%%%%%%%%%%%%%%%%%%%%%%%%%%%%%%%%%%%%%%%%%%%%%%%%%%%%%%%%%%%
%%%%%%%%%%         A martingale method         %%%%%%%%%%%%%%%
%%%%%%%%%%%%%%%%%%%%%%%%%%%%%%%%%%%%%%%%%%%%%%%%%%%%%%%%%%%%%%
%
%
%

\section{A martingale method}
\setcounter{equation}{0}
\label{s:martmeth}

Recall the definition of $\tri \cdot \tri$ in (\ref{def:tri}). In this section, we will always assume, without further mention, that assumption $(\mathfrak{I})$ is satisfied. Under this assumption, we will prove the following upper bound on $\E[S_n(f_t)^2]$. 

\begin{thm}
\label{martmeth}
There exists $c \ge 0$ such that for any $f \in L^\infty(\P)$ with $\E[f] = 0$, any integer $n$ and any $t \ge 0$, we have~: 
\begin{equation*}
\E[S_n(f_t)^2] \le 2 \tri f \tri^2 |B_n| + c \|f\|_\infty^2 (1+t)^2 |B_n|.
\end{equation*}
\end{thm}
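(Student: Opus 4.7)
The plan is to control $\E[S_n(f_t)^2]=\var(S_n(f_t))$ (note $\E[f_t]=0$ by translation invariance) through the Efron--Stein inequality associated with the product measure $\P$:
$$
\E[S_n(f_t)^2]\;\le\;\tfrac12\sum_{e\in\mathbb{B}^d}\E\bigl[(F-F^{(e)})^2\bigr],\qquad F:=S_n(f_t),
$$
where $F^{(e)}$ is $F$ with $\omega_e$ replaced by an independent copy $\tilde\omega_e$. For each edge $e$, I would couple the walks in environments $\omega$ and $\omega^{(e)}$ by sharing all Poisson clocks except those along $e$, and let $\tau$ be the first time one of the ``$e$-clocks'' rings while the walk is at an endpoint of $e$. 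On $\{\tau>t\}$ the two walks coincide, and $\theta_{X_t}\omega$, $\theta_{X_t}\omega^{(e)}$ differ only on the shifted edge $e-X_t$, yielding $|f(\theta_{X_t}\omega)-f(\theta_{X_t}\omega^{(e)})|\le|\grad f|(e-X_t)$; on $\{\tau\le t\}\subset\{\sigma_e\le t\}$ (where $\sigma_e$ is the first visit of the walk to an endpoint of $e$) I would use $2\|f\|_\infty$. Using that the coupling's marginal recovers the walk in $\omega$, summing over $x\in B_n$ produces the $\omega$-measurable majorant $|F-F^{(e)}|\le A^e+B^e$ with
$$
A^e:=\sum_{x\in B_n}\EEo_x[|\grad f|(e-X_t)],\qquad B^e:=2\|f\|_\infty\sum_{x\in B_n}\PPo_x[\sigma_e\le t].
$$
The inequality $(A^e+B^e)^2\le 2(A^e)^2+2(B^e)^2$ then reduces the theorem to proving $\sum_e\E[(A^e)^2]\le\tri f\tri^2|B_n|$ and $\sum_e\E[(B^e)^2]\le c\|f\|_\infty^2(1+t)^2|B_n|$.

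For the static piece, I would set $q_t^\omega(y):=\sum_{x\in B_n}\PPo_x[X_t=y]$ and observe that, along each of the $d$ coordinate directions, $A^e$ is a discrete convolution of $q_t^\omega$ against a slice of $|\grad f|$. Young's inequality $\|q\ast g\|_2\le\|q\|_2\|g\|_1$ summed over directions then gives $\sum_e(A^e)^2\le\tri f\tri^2\|q_t^\omega\|_{\ell^2(\Z^d)}^2$. Reversibility (the counting measure is reversible since $\omega_{x,y}=\omega_{y,x}$) rewrites $\|q_t^\omega\|_2^2=\sum_{x_1,x_2\in B_n}p_{2t}^\omega(x_1,x_2)$; translation invariance of $\P$ converts its expectation to $\sum_{x_1,x_2\in B_n}\ov{\P}_0[X_{2t}=x_2-x_1]\le|B_n|$, as required.

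For the dynamic piece, the decisive ingredient is the quenched hitting estimate
$$
\PPo_x[\sigma_u\le s]\;\le\;C\,\max\bigl(\omega(u),1/s\bigr)\,\EEo_x[\tau_u(2s)],
$$
where $\tau_u(s):=\int_0^s\1_{X_r=u}\,\d r$ and $\omega(u):=\sum_{z\sim u}\omega_{u,z}$. This follows from the strong-Markov inequality $\EEo_x[\tau_u(2s)]\ge\EEo_u[\tau_u(s)]\,\PPo_x[\sigma_u\le s]$ combined with $\EEo_u[\tau_u(s)]\ge(1-e^{-\omega(u)s})/\omega(u)$ (the walk stays at $u$ for an exponential time of mean $1/\omega(u)$ before its first jump). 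Summing over $x\in B_n$ and using the reversibility identity $\sum_x\EEo_x[\tau_u(s)]=\int_0^s\sum_xp_r^\omega(u,x)\,\d r=s$ yields the quenched bound $\phi_u:=\sum_{x\in B_n}\PPo_x[\sigma_u\le t]\le C(1+t\omega(u))$. Since every edge has two endpoints and each vertex belongs to $d$ edges per coordinate direction, $\sum_e(\sum_x\PPo_x[\sigma_e\le t])^2\le 4d\sum_u\phi_u^2$, and the elementary inequality $\phi_u^2\le C\phi_u+Ct\,\omega(u)\phi_u$ reduces matters to bounding $\ov{\E}[\sum_u\phi_u]$ and $\ov{\E}[\sum_u\omega(u)\phi_u]$. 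The first equals $\sum_{x\in B_n}\ov{\E}_x[|R_t|]\le|B_n|(1+Ct)$ (the expected range of the walk, majorized by one plus the mean number of jumps); the second, reapplying the hitting estimate, reduces to $\sum_{x\in B_n}\ov{\E}_x\bigl[\int_0^{2t}\omega(X_s)^2\,\d s\bigr]$, which by stationarity of the environment viewed by the particle equals $2t|B_n|\,\E[\omega(0)^2]$. Each is $O(|B_n|(1+t))$, and assembling the two contributions delivers the desired $c\|f\|_\infty^2|B_n|(1+t)^2$.

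The main obstacle is the dynamic piece: because the conductances are not assumed to be bounded above, the weight $\omega(u)$ appearing in the hitting estimate must be absorbed into local-time quantities $\EEo_x[\tau_u(s)]$ before any summation over $u$, and the finite moment $\E[\omega(0)^2]$ is produced only at the last step, via stationarity of the process $(\omega(s))_{s\ge 0}$ under $\ov{\P}$. The characteristic factor $(1+t)^2$ in the theorem is precisely the product of the $(1+t)$ coming from the total local-time mass $\sum_u\phi_u$ and the $(1+t)$ coming from the conductance-weighted mass $\sum_u\omega(u)\phi_u$.
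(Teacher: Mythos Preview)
Your decomposition via Efron--Stein is equivalent to the paper's martingale decomposition (for product measures the two coincide up to a Jensen step), and your treatment of the static piece $A^e$ is essentially the paper's argument rephrased through Young's inequality; no objection there.

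The gap is in the dynamic piece. Your hitting estimate
\[
\PPo_x[\sigma_u\le s]\le C\max(\omega(u),1/s)\,\EEo_x[\tau_u(2s)]
\]
is correct, but when you propagate it you end up invoking two moment conditions that the theorem does \emph{not} assume. First, bounding $\sum_u\phi_u$ by the expected range requires $\E[\omega(0)]<\infty$ (the mean number of jumps up to time $t$ is $t\,\E[\omega(0)]$ under $\ov{\P}$). Second, and more seriously, your final reduction of $\sum_u\omega(u)\phi_u$ to $2t|B_n|\,\E[\omega(0)^2]$ requires the conductances to have a finite second moment. Assumption $(\mathfrak{I})$, under which the theorem is stated, imposes only that the $(\omega_e)$ are i.i.d.\ and bounded below by $1$; there is no upper bound and no integrability hypothesis. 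Your argument therefore proves a strictly weaker statement.

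The paper confronts exactly this obstacle and writes: ``because conductances may take arbitrarily large values, this comparison is not possible, as the walk may exit the set $\mathcal{V}(e_k)$ very fast.'' Its remedy is to fix a threshold $\eta$ with $\P[p_\omega(0)>\eta]$ small, declare sites with $p_\omega(x)\le\eta$ ``good'', and replace $\mathcal{V}(e_k)$ by an enlarged set $\ov{\mathcal{V}}_\omega(e_k)$ whose inner boundary consists only of good sites. The hitting probability of $\mathcal{V}(e_k)$ is then compared with the occupation time of $\ov{\mathcal{V}}_\omega(e_k)$, which is uniformly bounded below (Lemma~6.4) because the walk, once inside, must pass through a good site---where it lingers for an exponential time of rate at most $\eta$---before exiting. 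The price is that one must control $\sum_k|\ov{\mathcal{V}}_\omega(e_k)|\1_{0\in\ov{\mathcal{V}}_\omega(e_k)}$, and this is done by a percolation argument (Lemma~6.5): the set of bad sites is subcritical, so the enlarged neighbourhoods have integrable size. No moment of $\omega_e$ is ever used. If you want to rescue your approach without moments, this enlargement-plus-percolation step is the missing idea.
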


\begin{proof}
We choose an enumeration of the edges $\mathbb{B}^d = (e_k)_{k \in \N}$, and define $\mathcal{F}_k$ as the $\sigma$-algebra generated by $(\omega_{e_0},\ldots, \omega_{e_k})$, $\mathcal{F}_{-1} = \{\emptyset,\Omega\}$. For any $t \ge 0$, we define the martingale $M_k(t) = \E[S_n(f_t) | \mathcal{F}_k]$ and the corresponding martingale increments $\Delta_k(t) = M_k(t)-M_{k-1}(t)$. We have the following decomposition :
$$
S_n(f_t) = \sum_{k=0}^{+\infty} \Delta_k(t).
$$
This convergence holds almost surely. As $f \in L^\infty(\P)$, the dominated convergence theorem ensures that it holds also in $L^2$ sense. Due to the orthogonality of the increments, we get :
\begin{equation*}
%\label{sumsquares}
\E[S_n(f_t)^2] = \sum_{k=0}^{+\infty} \E[\Delta_k(t)^2].
\end{equation*}
We will now estimate the right-hand side of this equality. First, we introduce a representation (that we learned from \cite{boivin}) for $M_k(t)$ that we find convenient. For two environments $\omega,\sigma \in \Omega$, we define~:
$$
\cro_k(e_i) = 
\left|
\begin{array}{l}
\omega_{e_i} \text{ if } i \le k \\
\sigma_{e_i} \text{ otherwise},
\end{array}
\right.
$$
and we write $\E_{\sigma}$ to refer to integration with respect to $\d \P(\sigma)$. As we assume in this section that $(\omega_e)_{e \in \mathbb{B}^d}$ are independent random variables, we can rewrite $M_k(t)$ as~:
\begin{eqnarray*}
M_k(t)(\omega) & = & \E_\sigma[S_n(f_t)(\cro_k)] \\
& = & \sum_{x \in B_n} \E_\sigma[f_t(\theta_x \ \cro_k)].
\end{eqnarray*}
But note that
$$
f_t(\theta_x \ \omega) = \EE_0^{\theta_x \omega}[f(\theta_{X_t+x} \ \omega)],
$$
and as the law of $X_t+x$ under $\PP_0^{\theta_x \omega}$ is the same as the one of $X_t$ under $\PPo_x$, we are led to
$$
f_t(\theta_x \ \omega) = \EEo_x[f(\theta_{X_t} \ \omega)],
$$
which implies that
$$
M_k(t) =  \sum_{x \in B_n} \E_\sigma\big[\EE^{\cro_k}_x[f(\theta_{X_t} \ \cro_k)]\big].
$$
We obtain~:
$$
\Delta_k(t) = \sum_{x \in B_n} \E_\sigma\big[\EE^{\cro_{k}}_x[f(\theta_{X_t} \ \cro_{k})] - \EE^{\cro_{k-1}}_x[f(\theta_{X_t} \ \cro_{k-1})]\big].
$$
Let $\mathcal{V}(e_k)$ be the set of endpoints of $e_k$. We want to distinguish whether the walk already met the set $\mathcal{V}(e_k)$ at time $t$ or not. For this purpose, we introduce the following events :
\begin{equation}
\label{defAt}
\mathcal{A}_t(y) = \left\{ \{X_s, 0\le s \le t\} \cap \mathcal{V}(e_k) = \emptyset  \text{ and } X_t = y \right\},
\end{equation}
\begin{equation}
\label{defA't}
\mathcal{A}'_t = \left\{ \{X_s, 0\le s \le t\} \cap \mathcal{V}(e_k) \neq \emptyset \right\}.
\end{equation}
Then one can decompose $\Delta_k(t)$ as $A_k(t)+A'_k(t)$, where~:
\begin{equation}
\label{akt}
\begin{split}
& A_k(t) \\
& \quad =  \sum_{\substack{x \in B_n \\ y \in \Z^d}} \E_\sigma\big[\EE^{\cro_{k}}_x[f(\theta_{X_t} \ \cro_{k}) \1_{\mathcal{A}_t(y)}] - \EE^{\cro_{k-1}}_x[f(\theta_{X_t} \ \cro_{k-1}) \1_{\mathcal{A}_t(y)}]\big] \\
& \quad = \sum_{\substack{x \in B_n \\ y \in \Z^d}} \E_\sigma \big[ f(\theta_y \ \cro_{k}) \PP^{\cro_{k}}_x[\mathcal{A}_t(y)] - f(\theta_y \ \cro_{k-1}) \PP^{\cro_{k-1}}_x[\mathcal{A}_t(y)] \big],
\end{split}
\end{equation}
and
\begin{equation}
\label{ak't}
A'_k(t) = \sum_{x \in B_n} \E_\sigma\big[\EE^{\cro_{k}}_x[f(\theta_{X_t} \ \cro_{k}) \1_{\mathcal{A}'_t}] - \EE^{\cro_{k-1}}_x[f(\theta_{X_t} \ \cro_{k-1}) \1_{\mathcal{A}'_t}]\big],
\end{equation}
so that we have~:
\begin{equation}
\label{decompSn(f)}
\E[S_n(f_t)^2] \le 2 \sum_{k=0}^{+\infty} \E[A_k(t)^2] + 2 \sum_{k=0}^{+\infty} \E[A_k'(t)^2].
\end{equation}
We now turn to evaluate each of these terms. Theorem~\ref{martmeth} is proved once we have shown the following results.
\begin{prop}
\label{prop:martmeth}
For any integer $n$ and any $t \ge 0$~:
\begin{equation*}
\sum_{k=1}^{+\infty} \E[A_k(t)^2] \le \tri f \tri^2 |B_n|.
\end{equation*}
There exists $c \ge 0$ (independent of $f$) such that for any integer $n$ and any $t \ge 0$~:
\begin{equation*}
\sum_{k=1}^{+\infty} \E[A'_k(t)^2] \le c \|f\|_\infty^2 (1+t)^2 |B_n|.
\end{equation*}
\end{prop}
\begin{proof}[Proof of Proposition \ref{prop:martmeth}]
Note that $\PPo_x[\mathcal{A}_t(y)]$, as a function of $\omega$, does not depend on $\omega_{e_k}$. Therefore, for almost every $\omega$, we have~:
\begin{eqnarray*}
A_k(t) & = & \sum_{\substack{x \in B_n \\ y \in \Z^d}} \E_\sigma \big[  (f(\theta_y \ \cro_{k})  - f(\theta_y \ \cro_{k-1})) \PP^{\cro_{k}}_x[\mathcal{A}_t(y)] \big] \\
|A_k(t)| & \le & \sum_{\substack{x \in B_n \\ y \in \Z^d}} | \grad f | (e_k-y)  \E_\sigma \big[ \PP^{\cro_{k}}_x[X_t = y] \big] ,
\end{eqnarray*}
where $e_k-y$ stands for the edge obtained when translating the edge $e_k$ by the vector $(-y)$ ($| \grad f |$ is defined in (\ref{def:grad})). Due to reversibility, we have that $\PP^{\cro_{k}}_x[X_t = y] = \PP^{\cro_{k}}_y[X_t = x]$, and it comes that~:
\begin{equation}
\label{barreAk}
|A_k(t)| \le \sum_{y \in \Z^d} | \grad f | (e_k-y) \E_\sigma \big[ \PP^{\cro_{k}}_y[X_t \in B_n] \big] .
\end{equation}
Using the fact that $\PP^{\cro_{k}}_y[X_t \in B_n] \le 1$, we get that for almost every $\omega$, $|A_k(t)| \le \tri f \tri$. Using this together with equation~(\ref{barreAk}), we obtain~:
$$
\sum_{k=1}^{+\infty} \E[A_k(t)^2]  \le  \sum_{k=1}^{+\infty} \tri f \tri \sum_{y \in \Z^d} | \grad f | (e_k-y) \E \E_{\sigma} \big[ \PP^{\cro_{k}}_y[X_t \in B_n] \big].
$$
Noting that the law of $\cro_{k}$ under $\E \E_{\sigma}$ is the same as the one of $\omega$ under $\E$, the latter simplifies into
$$
\tri f \tri \sum_{y \in \Z^d} \sum_{k=1}^{+\infty} | \grad f | (e_k-y) \E \big[ \PPo_y[X_t \in B_n] \big] 
\le \tri f \tri^2 \sum_{y \in \Z^d} \E \big[ \PPo_y[X_t \in B_n] \big].
$$
Using once again reversibility, we have 
$$
\sum_{y \in \Z^d} \E \big[ \PPo_y[X_t \in B_n] \big] = \sum_{\substack{x \in B_n \\ y \in \Z^d}} \E \big[ \PPo_x[X_t = y] \big] = |B_n| ,
$$ 
which shows the first part of the proposition.

We now turn to $A'_k(t)$. From equation (\ref{ak't}), we have the following estimate~:
\begin{eqnarray*}
|A'_k(t)| & \le & \|f\|_\infty \sum_{x \in B_n} \E_\sigma\big[\PP^{\cro_{k}}_x[\mathcal{A}'_t] + \PP^{\cro_{k-1}}_x[\mathcal{A}'_t]\big]  \notag \\
& \le & 2 \|f\|_\infty \sum_{x \in B_n} \E_\sigma\big[\PP^{\cro_{k}}_x[\mathcal{A}'_t] \big],
\end{eqnarray*}
%et c'est la qu'on fait venir le t^2...
where, in the last step, we used the fact that the event $\mathcal{A}'_t$ does not depend on the conductance $\omega_{e_k}$. If we define $\ov{A}_k(t)$ to be
\begin{equation}
\label{Akl}
\ov{A}_k(t) =  \sum_{x \in B_n} \PP^{\cro_{k}}_x[\mathcal{A}'_t],
\end{equation} 
then the last equation can be rewritten as
\begin{equation}
\label{A'kcomp}
|A'_k(t)| \le 2 \|f\|_\infty \E_\sigma[\ov{A}_k(t)].
\end{equation}

In order to estimate $\ov{A}_k(t)$, we would like to compare the probability to hit $\mathcal{V}(e_k)$ (the event $\mathcal{A}'_t$) with the expected time spent inside this set. However, because conductances may take arbitrarily large values, this comparison is not possible, as the walk may exit the set $\mathcal{V}(e_k)$ very fast. In order to circumvent this difficulty, we will compare the probability to hit $\mathcal{V}(e_k)$ with the expected time spent in a larger set, to be defined below. 

Let $p_\omega(x)$ be the total jump rate of site $x$~:
$$
p_\omega(x) = \sum_{y \sim x} \omega_{x,y}.
$$
For a reason that will become clear in the proof of part~(\ref{lemperco:percolepas}) of Lemma~\ref{lemperco}, we now introduce a parameter $\eta$, chosen large enough so that~:
\begin{equation}
\label{q:percolepas}
q := \P[p_\omega(0) > \eta] < \frac{1}{(2d)^{(2d+1)}}.
\end{equation}
We say that a point $x \in \Z^d$ is \emph{good} in the environment $\omega$ if $p_\omega(x) \le \eta$ ; we say that it is \emph{bad} otherwise. 

%As will become clear in the proof of part~(\ref{lemperco:percolepas}) of Lemma~\ref{lemperco}, we need to choose $\eta$ large enough so that the probability for a point to be bad is small enough~:
%\begin{equation}
%\label{q:percolepas}
%q := \P[p_\omega(0) > \eta] < \frac{1}{(2d)^{(2d+1)}}.
%\end{equation}

We now define $\ov{\mathcal{V}}_\omega(e_k)$ the following way~:
\begin{multline}
\label{gammal}
y \in \ov{\mathcal{V}}_\omega(e_k) \\
\Leftrightarrow y \in \mathcal{V}(e_k) \text{ or } \exists \gamma = (\gamma_1, \ldots, \gamma_l) : \gamma_1 \in \mathcal{V}(e_k), \gamma_l = y, \gamma_1,\ldots,\gamma_{l-1} \text{ bad points} ,
\end{multline}
where $\gamma$ is a nearest-neighbour path. The set $\ov{\mathcal{V}}_\omega(e_k)$ contains $\mathcal{V}(e_k)$, and any point in its inner boundary is a good point (in fact, it is the smallest such set). Indeed, the fact that it contains $\mathcal{V}(e_k)$ is clear from the definition. Moreover, let $y$ be a bad point in $\ov{\mathcal{V}}_\omega(e_k)$. Then there exists a nearest-neighbour path $\gamma_1,\ldots,\gamma_l$ as in (\ref{gammal}) (if $y \in \mathcal{V}(e_k)$, then one can choose the path that is made only of the point $y$). Observe that as $\gamma_l = y$, $\gamma_l$ is a bad point. Hence, if $z$ is a neighbour of $y$, considering the path $\gamma_1,\ldots,\gamma_l,z$, one can see that $z$ belongs to $\ov{\mathcal{V}}_\omega(e_k)$. We have shown that any bad point in $\ov{\mathcal{V}}_\omega(e_k)$ has all its neighbours in $\ov{\mathcal{V}}_\omega(e_k)$. This implies that any point in the inner boundary of $\ov{\mathcal{V}}_\omega(e_k)$ is a good point.

The following lemma relates the probability to hit ${\mathcal{V}}(e_k)$ with the expected time spent inside $\ov{\mathcal{V}}_\omega(e_k)$.

\begin{lem}
\label{lem:martmeth}
For every $x \in \Z^d$ and every $\omega$, we have :
\begin{equation}
\label{eq:lem:mm}
\PPo_x[\mathcal{A}'_t] \le e \eta \int_0^{t+1} \PPo_x[X_s \in \ov{\mathcal{V}}_\omega(e_k)] \ \d s .
\end{equation}
\end{lem}
\begin{proof}
Let $T$ be the hitting time of the set $\mathcal{V}(e_k)$~:
$$
T = \inf \{ s \ge 0 : X_s \in \mathcal{V}(e_k) \}.
$$
One can bound from below the integral appearing in the right hand side of (\ref{eq:lem:mm}) as follows~:
\begin{eqnarray*}
\int_0^{t+1} \PPo_x[X_s \in \ov{\mathcal{V}}_\omega(e_k)] \ \d s & \ge & \EEo_x\left[\1_{T\le t} \int_0^{t+1} \1_{ X_s \in \ov{\mathcal{V}}_\omega(e_k)} \ \d s \right] \\
& \ge & \EEo_x\left[\1_{T\le t} \int_T^{T+1} \1_{ X_s \in \ov{\mathcal{V}}_\omega(e_k)} \ \d s \right].
\end{eqnarray*}
By the Markov property at time $T$, the latter equals~:
$$
\EEo_x\left[\1_{T\le t} \ \EEo_{X_T} \left[ \int_0^{1} \1_{ X_s \in \ov{\mathcal{V}}_\omega(e_k)} \ \d s \right] \right].
$$
As $\PPo_x[T \le t] = \PPo_x[\mathcal{A}'_t]$, the lemma would be proved if we can show that
\begin{equation}
\label{lem:interm}
\EEo_{X_T} \left[ \int_0^{1} \1_{ X_s \in \ov{\mathcal{V}}_\omega(e_k)} \ \d s \right] \ge \frac{1}{e \eta}.
\end{equation}
Let $\ov{T}$ be the exit time from $\ov{\mathcal{V}}_\omega(e_k)$~:
$$
\ov{T} = \inf \{s \ge 0 : X_s \notin \ov{\mathcal{V}}_\omega(e_k) \}.
$$
The left hand side of (\ref{lem:interm}) is greater than
$$
\EEo_{X_T}[\min(\ov{T},1)].
$$
As we want to show that the exit from $\ov{\mathcal{V}}_\omega(e_k)$ occurs slowly enough, we are interested in the first time $T_g$ that the walk visits a good site~:
$$
T_g = \inf \{ s \ge 0 : X_s \text{ is a good site} \},
$$
and also in the time $\eps_g$ spent between $T_g$ and the next jump of the walk. Conditionally on $X_{T_g}$, the random variable $\eps_g$ is exponentially distributed, with parameter $p_\omega(X_{T_g})$. But by definition of $T_g$, the site $X_{T_g}$ is good, hence $p_\omega(X_{T_g}) \le \eta$, and we obtain that for any $y \in \Z^d$~:
\begin{equation}
\label{lem:exprv}
\PPo_y[\eps_g \ge \eta^{-1}] \ge e^{-1}.
\end{equation}
Moreover, if the walk starts from a site inside $\ov{\mathcal{V}}_\omega(e_k)$, then it is clear that $\ov{T} \ge \eps_g$. This is due to the fact that any site in the inner boundary of $\ov{\mathcal{V}}_\omega(e_k)$ is a good site, so the walk must meet a good site before exiting $\ov{\mathcal{V}}_\omega(e_k)$. We finally obtain~:
$$
\EEo_{X_T}[\min(\ov{T},1)] \ge \EEo_{X_T}[\min(\eps_g,1)] \ge \eta^{-1} \PPo_{X_T}[\eps_g \ge \eta^{-1}],
$$
which, together with (\ref{lem:exprv}), proves the lemma.
\end{proof}
Recalling the definition of $\ov{A}_k(t)$ from (\ref{Akl}), and using the estimate provided by the lemma, we obtain~:
\begin{equation}
\label{Aklineq}
\ov{A}_k(t) \le e \eta \sum_{x \in B_n} \int_0^{t+1} \PP^{\cro_{k}}_x[X_s \in \ov{\mathcal{V}}_{\cro_k}(e_k)] \ \d s .
\end{equation}
Using reversibility, one can rewrite it as~:
\begin{eqnarray}
\label{Aklineq2}
\ov{A}_k(t) & \le &  e \eta \int_0^{t+1} \sum_{y \in \ov{\mathcal{V}}_{\cro_k}(e_k)} \PP^{\cro_{k}}_y[X_s \in B_n] \ \d s \notag\\
& \le &  e \eta (t+1) |\ov{\mathcal{V}}_{\cro_k}(e_k)| .
\end{eqnarray}
Besides, by (\ref{A'kcomp}) and Jensen's inequality, note that~:
\begin{eqnarray}
\label{EA'kdecomp}
\E[A_k'(t)^2] & \le & 4 \|f\|_\infty^2 \E\left[ \left( \E_\sigma[\ov{A}_k(t)] \right)^2 \right] \notag \\
& \le & 4 \|f\|_\infty^2 \E \E_\sigma[\ov{A}_k(t)^2] .
\end{eqnarray}
Hence, in order to prove the second part of Proposition~\ref{prop:martmeth}, it is enough to show that there exists $c \ge 0$ such that for any integer $n$ and any $t \ge 0$~:
\begin{equation*}
\sum_{k=0}^{+\infty} \E \E_\sigma[\ov{A}_k(t)^2] \le c (t+1)^2 |B_n| .
\end{equation*}
Using inequalities (\ref{Aklineq}) and (\ref{Aklineq2}), we obtain~:
\begin{equation}
\label{EEsigmaovA}
\sum_{k=0}^{+\infty} \E \E_\sigma[\ov{A}_k(t)^2] 
\le  e^2 \eta^2 (t+1) \sum_{k=0}^{+\infty} \sum_{x \in B_n} \int_0^{t+1} \E  \left[|\ov{\mathcal{V}}_\omega(e_k)|  \PPo_x[X_s \in \ov{\mathcal{V}}_\omega(e_k)]  \right] \ \d s ,
\end{equation}
where the integration of $\cro_k$ under $\E \E_\sigma$ has been replaced by integration of $\omega$ under $\E$. 
In this last expression, we can rewrite the expectation the following way~:
\begin{equation}
\label{bientotW}
\sum_{k=0}^{+\infty} \E \left[|\ov{\mathcal{V}}_\omega(e_k)|  \PPo_x[X_s \in \ov{\mathcal{V}}_\omega(e_k)]  \right]
 = \sum_{k=0}^{+\infty} \sum_{y \in \Z^d} \E \left[|\ov{\mathcal{V}}_\omega(e_k)| \1_{y \in \ov{\mathcal{V}}_\omega(e_k)} \PPo_x[X_s = y] \right] .
\end{equation}
We introduce the following function of the environment~:
$$
W(\omega) = \sum_{k=0}^{+\infty} |\ov{\mathcal{V}}_\omega(e_k)| \1_{0 \in \ov{\mathcal{V}}_\omega(e_k)}.
$$
From the definition of $\ov{\mathcal{V}}_\omega(e_k)$ given in (\ref{gammal}), it is not hard to check that
$$
\ov{\mathcal{V}}_{\theta_y \omega}(e_k) = -y + \ov{\mathcal{V}}_\omega(y + e_k),
$$
where we understand $y + e_k$ as the edge obtained from $e_k$ by a translation of the vector $y$. This observation implies that~:
\begin{eqnarray*}
W(\theta_y \ \omega) & = & \sum_{k=0}^{+\infty} |\ov{\mathcal{V}}_\omega(y +  e_k)| \1_{y \in \ov{\mathcal{V}}_\omega(y + e_k)} \\
& = & \sum_{k=0}^{+\infty} |\ov{\mathcal{V}}_\omega(e_k)| \1_{y \in \ov{\mathcal{V}}_\omega(e_k)}.
\end{eqnarray*}
As a consequence, the right-hand side of equation~(\ref{bientotW}) becomes~:
\begin{equation}
\label{aborner}
\sum_{y \in \Z^d} \E \left[W(\theta_y \ \omega) \PPo_x[X_s = y] \right] = \E\EEo_x[W(\omega(s))].
\end{equation}
As the measure $\P$ is stationary for the environment viewed by the particle, this last expectation does not depend on $s$. We thus obtain that the expression appearing in the right hand side of (\ref{EEsigmaovA}) is equal to 
$$
\E[W(\omega	)] e^2 \eta^2 (t+1)^2 |B_n|,
$$
and Proposition~\ref{prop:martmeth} is proved, provided $\E[W(\omega)]$ is finite. We prove this fact in the next lemma.

Before stating it, we introduce $\mathcal{C}$ the set of $z \in \Z^d$ such that there exists a path from $0$ to $z$ visiting only bad points, except possibly $0$ and $z$~:
\begin{equation}
\label{defCy}
z \in \mathcal{C} \Leftrightarrow \exists \gamma = (\gamma_0, \ldots, \gamma_l) : \gamma_0 = 0, \gamma_l = z, \gamma_1,\ldots,\gamma_{l-1} \text{ bad points} ,
\end{equation}
where $\gamma$ is a nearest-neighbour path.

\begin{lem}
\label{lemperco}
\begin{enumerate}
\item 
\label{lemperco:Cy}
For any $\omega$, we have
\begin{equation}
\label{WleC2}
W(\omega) \le 2d |\mathcal{C}|^2.
\end{equation}
\item 
\label{lemperco:percolepas}
The random variable $W(\omega)$ is integrable.
\end{enumerate}
\end{lem}
\begin{proof}[Proof of Lemma~\ref{lemperco}]
To prove the first part of the lemma, it is enough to show the following implication~:
\begin{equation}
\label{implincl}
0 \in \ov{\mathcal{V}}_\omega(e_k) \Rightarrow \ov{\mathcal{V}}_\omega(e_k) \subseteq \mathcal{C}.
\end{equation}
Indeed, it would imply that
$$
W(\omega) \le |\{ k : \ov{\mathcal{V}}_\omega(e_k) \subseteq \mathcal{C} \}| \ |\mathcal{C}| \le |\{ k : {\mathcal{V}}(e_k) \subseteq \mathcal{C} \}| \ |\mathcal{C}|.
$$
Moreover, $|\{ k : {\mathcal{V}}(e_k) \subseteq \mathcal{C} \}|$ is the number of edges between any two points of $\mathcal{C}$. Any point having at most $2d$ edges attached to it, it is clear that this number is smaller than $2d|\mathcal{C}|$, and we obtain (\ref{WleC2}). We now proceed to prove (\ref{implincl}).

Let us assume that $0 \in \ov{\mathcal{V}}_\omega(e_k)$, and let $z \in \ov{\mathcal{V}}_\omega(e_k)$. We will show that $z \in \mathcal{C}$, distinguishing various cases and using the characterizations given in (\ref{gammal}) and (\ref{defCy}).

Suppose first that $0 \in \mathcal{V}(e_k)$. If $z$ belongs to $\mathcal{V}(e_k)$ as well, then considering the path $(0,z)$ (or simply $(0)$ if $z = 0$), one can see that $z \in \mathcal{C}$. Otherwise, there exists $\gamma' = (\gamma'_1, \ldots, \gamma'_{l'})$ such that $\gamma'_1 \in \mathcal{V}(e_k)$, $\gamma'_{l'} = z$, and $\gamma'_1,\ldots,\gamma'_{l'-1}$ are bad points. The path $(0, \gamma'_1, \ldots, \gamma'_{l'})$ (or $(\gamma'_1, \ldots, \gamma'_{l'})$ if $\gamma'_1 = 0$) satisfies the conditions needed to show that $z \in \mathcal{C}$. 

Now if $0 \notin \mathcal{V}(e_k)$, then there exists a path $\gamma = (\gamma_1, \ldots, \gamma_l)$ as in (\ref{gammal}). If $z$ is in $\mathcal{V}(e_k)$, then the path $(\gamma_l, \ldots, \gamma_1,z)$ (or $(\gamma_l, \ldots, \gamma_1)$ if $\gamma_1 = z$) shows that $z \in \mathcal{C}$. Otherwise, there exists $\gamma' = (\gamma'_1, \ldots, \gamma'_{l'})$ as before, and the path $(\gamma_l, \ldots, \gamma_1, \gamma'_1, \ldots, \gamma'_{l'})$ (or $(\gamma_l, \ldots, \gamma_1, \gamma'_2, \ldots, \gamma'_{l'})$ if $\gamma_1 = \gamma'_1$) is an appropriate choice. This finishes the proof of (\ref{implincl}).

We now turn to part (\ref{lemperco:percolepas}). For $x = (x_1,\ldots,x_d) \in \Z^d$, we define the graph norm $\|x\|_1 = |x_1|+\cdots+|x_d|$. Let $\mathcal{B}(r)$ be the ball centered at the origin and of radius $r$ with respect to $\|\cdot\|_1$. For any $r \in \N$~:
\begin{multline*}
\P[\mathcal{C} \nsubseteq \mathcal{B}(r)] \\ 
\le \P[ \exists \gamma = (\gamma_0,\ldots, \gamma_r) \text{ simple path w. } \gamma_0 = 0 \text{ and } \gamma_1, \ldots, \gamma_r \text{ bad points}].
\end{multline*}
Let $G_x$ be the event `$x$ is a good point'. Note that $G_x$ is independent of $(G_y)$, for all $y$ that is neither $x$ nor one of its neighbours (there are $2d+1$ such sites). So from $\{\gamma_i, 1\le i\le r\}$, we can extract a subset $\gamma'$ of cardinal at least $k = \lceil r/(2d+1) \rceil$, and such that $(G_{x})_{x \in \gamma'}$ are independent random variables. Recalling that we write $q$ for the probability to be a bad site, and bounding the number of possible paths by $(2d)^{r}$, we get~: 
$$
\P[\mathcal{C} \nsubseteq \mathcal{B}(r)] \le (2d)^{r} q^k.
$$
Using the hypothesis on $q$ made in (\ref{q:percolepas}), the quantity $\mu = 2d  q^{1/(2d+1)}$ is strictly smaller than $1$, and we obtain~:
$$
\P[\mathcal{C} \nsubseteq \mathcal{B}(r)] \le \mu^r.
$$
Also, note that, for some $C > 0$~:
$$
\P[|\mathcal{C}| \ge r] \le \P[\mathcal{C} \nsubseteq \mathcal{B}(C r^{1/d})].
$$
Combining these two facts, we obtain that
$$
\P[|\mathcal{C}| \ge r] \le \mu^{C r^{1/d}}.
$$
This decay of the tail probability ensures that $|\mathcal{C}|^2$ is integrable, and hence $W(\omega)$ as well as a consequence of the first part of the lemma.
\end{proof}
Part~(\ref{lemperco:percolepas}) of Lemma~\ref{lemperco} was the last required step in the establishment of Proposition~\ref{prop:martmeth}.
\end{proof}
Theorem~\ref{martmeth} now follows from Proposition~\ref{prop:martmeth}, as can be seen from inequality~(\ref{decompSn(f)}).
\end{proof}
%
%
%
%%%%%%%%%%%%%%%%%%%%%%%%%%%%%%%%%%%%%%%%%%%%%%%%%%%%%%%%%%%%%%
%%%%%%%%%%%%%%%%%%%      Conclusion     %%%%%%%%%%%%%%%%%%%%%%
%%%%%%%%%%%%%%%%%%%%%%%%%%%%%%%%%%%%%%%%%%%%%%%%%%%%%%%%%%%%%%
%
%
% 
%
%
\section{Theorem \ref{main:d>} and extensions}
\label{s:concl}
\setcounter{equation}{0}

We will now combine the results of the preceding sections to obtain the decay of the variance of $f_t$ as $t$ goes to infinity. 

\begin{proof}[Proof of Theorem \ref{main:d>}]
We recall that $\rN(f) = \tri f \tri^2 + \|f\|_\infty^2$. We have shown in Theorem~\ref{martmeth} that under assumption $(\mathfrak{I})$, there exists $c > 0$ such that for any $f \in L^\infty(\P)$ with $\E[f] = 0$, any integer $n$ and any $t \ge 0$~: 
\begin{equation*}
\E[S_n(f_t)^2] \le c \rN(f) (1+t)^2 |B_n|,
\end{equation*}
or in other terms, that $\mclN(f_t) \le  c \rN(f) (1+t)^2$. This, together with Proposition~\ref{decaygeneral}, implies that there exists $C > 0$ such that for any $t \ge 1$~:
$$
\E[(f_t)^2] \le C \rN(f) \left( \int_1^t (1+s)^{-4/d} \ \d s  \right)^{-d/2}.
$$
This estimate (together, for smaller times, with the fact that $\E[(f_t)^2] \le \|f\|_2^2 \le \rN(f)$) leads to the announced result.
\end{proof}

Our method of proof has the following direct generalization, which holds true without the additional assumption $(\mathfrak{I})$.

\begin{prop}
\label{generalization}
Let $\delta, \gamma \ge 0$ be such that $\delta > 2 \gamma$. There exists $C > 0$ such that, for any $f : \Omega \to \R$ satisfying, for any $n \in \N$ and any $t \ge 0$~:
$$
\E[(S_n(f_t))^2] \le c(f) n^{2d-\delta} (t+1)^\gamma
$$
where $c(f) > 0$, we have
$$
\E[(f_t)^2] \le C \frac{c(f) }{t^{\delta/2 - \gamma}}.
$$
Moreover, the claim still holds if one replaces $f_t$ by $f^\circ_t$.
\end{prop}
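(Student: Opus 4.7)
The plan is to rerun the Nash-inequality machinery of Section~\ref{s:specgap}, but to feed the sharper, time-dependent hypothesis on $\E[(S_n(f_t))^2]$ into Proposition~\ref{seminash} in place of the crude bound $|B_n|\, \mclN'(f_t)$. Using $|B_n| \ge 2 n^d$, this yields, for every positive integer $n$ and every $t \ge 0$,
$$
\E[(f_t)^2] \le C_S\, n^2\, \cE(f_t, f_t) + \frac{c(f)(t+1)^\gamma}{2\, n^\delta}.
$$
I would then balance the two terms by taking $n$ close to $w = [c(f)(t+1)^\gamma / \cE(f_t, f_t)]^{1/(\delta+2)}$, replacing $w$ by $1$ when it falls below $1$ (in which case $\cE(f_t,f_t)$ dominates and $\E[(f_t)^2]$ decays at least exponentially, which is stronger than claimed). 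The balanced case produces the weighted Nash inequality
$$
\E[(f_t)^2] \le C_1\, \cE(f_t, f_t)^{\delta/(\delta+2)}\, [c(f)(t+1)^\gamma]^{2/(\delta+2)}.
$$

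Setting $u(t) = \E[(f_t)^2]$ and using $u'(t) = -2\cE(f_t, f_t)$, this rearranges to the differential inequality
$$
\frac{d}{dt}\, u(t)^{-2/\delta} \ge C_2\, c(f)^{-2/\delta}\, (t+1)^{-2\gamma/\delta}.
$$
Integrating from $0$ to $t$ and discarding the non-negative initial term $u(0)^{-2/\delta}$, one uses the assumption $\delta > 2\gamma$, which makes $2\gamma/\delta < 1$ and hence
$$
\int_0^t (s+1)^{-2\gamma/\delta}\, ds \ge C_3\, t^{1-2\gamma/\delta}
$$
for $t \ge 1$. This produces $u(t)^{-2/\delta} \ge C_4\, c(f)^{-2/\delta}\, t^{1-2\gamma/\delta}$, equivalently the desired $\E[(f_t)^2] \le C\, c(f) / t^{\delta/2 - \gamma}$. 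For small $t$ the right-hand side is large and the bound is absorbed into $C$ as in the proof of Theorem~\ref{main:d>}. The argument for $f^\circ_t$ is identical, since Proposition~\ref{seminash} holds with $\cE$ replaced by $\cE^\circ$ and the derivative identity $\partial_t \E[(f^\circ_t)^2] = -2\cE^\circ(f^\circ_t, f^\circ_t)$ remains valid.

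The main obstacle I foresee is handling the optimization over integer $n$ cleanly (including the branch where $w < 1$) and checking that the small-$t$ regime requires no more than a trivial a priori bound; both points follow the same pattern as in the proof of Proposition~\ref{nash}. The role of the hypothesis $\delta > 2\gamma$ is precisely to ensure that $\int_0^t (s+1)^{-2\gamma/\delta}\, ds$ diverges as a positive power of $t$, and the exponent $\delta/2 - \gamma$ emerges naturally from this integration.
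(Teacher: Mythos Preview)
Your proposal is correct and follows essentially the same route as the paper: feed the hypothesis into Proposition~\ref{seminash}, rerun the proof of Propositions~\ref{nash} and~\ref{decaygeneral} with $\mclN'(f_t)$ replaced by $c(f)(t+1)^\gamma$ and the exponent $d$ by $\delta$, and integrate the resulting differential inequality using $2\gamma/\delta < 1$. The only cosmetic difference is that the paper integrates from $1$ rather than $0$ (thereby sidestepping the $w<1$ branch entirely, since Lemma~\ref{decroissdirichl} guarantees $w\ge 1$ for $t\ge 1$ once one observes $c(f)\ge\|f\|_2^2$), and covers small $t$ with the trivial bound $\E[(f_t)^2]\le\|f\|_2^2\le c(f)$.
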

\begin{proof}
As given by Proposition~\ref{seminash}, we know that the above hypothesis implies 
$$
\E[(f_t)^2] \le C_S n^2 \cE(f_t,f_t) + c(f) n^{-\delta} (t+1)^\gamma.
$$
From this inequality (compare with equation~(\ref{presqnash})), one can follow the proof of Propositions~\ref{nash} and \ref{decaygeneral}, replacing $\mclN'(f_t)$ by $c(f) (t+1)^\gamma$ and the exponent $d$ by $\delta$. The fact that $c(f)$ is larger than $\|f\|_2^2$ is clear from the hypothesis (taking $n=0$ and $t=0$), and one obtains
$$
\E[f_t^2] \le C c(f) \left( \int_1^t s^{-2\gamma/\delta} \d s \right)^{-\delta/2} \le C \frac{c(f)}{t^{\delta/2 - \gamma}},
$$
provided $2 \gamma / \delta < 1$ and $t$ is large enough (say $t \ge 2$). The result now follows, using the bound $\E[f_t^2] \le \|f\|_2^2 \le c(f)$ for smaller times. The same proof applies to $f^\circ_t$ as well.
\end{proof}

The interest of this generalization is twofold. On one hand, it may provide a variance decay for functionals for which $\mclN(f)$ is infinite (for instance if the environment is assumed only to be mixing). On the other, it may strengthen the original claim for functionals for which $\E[(S_n(f_t))^2]$ decays atypically fast. For example, let $(e_1,\ldots,e_d)$ be the canonical basis of $\R^d$. We define the local drift in the first coordinate as
\begin{equation}
\label{deflocaldrift}
\mathfrak{d}(\omega) = \omega_{0,e_1} - \omega_{0,-e_1}.
\end{equation}
Then a simple calculation shows that in the sum $S_n(\mathfrak{d})$, most of the terms cancel out, except for boundary terms, so that, provided the conductances have a second moment, we have~:
$$
\E[(S_n(\mathfrak{d}))^2] \le C n^{d-1}.
$$
Now recall that, as given by Proposition~\ref{contract}, the function $t\mapsto \E[(S_n(\mathfrak{d}^\circ_t))^2]$ is decreasing, so the above estimate still holds for $\E[(S_n(\mathfrak{d}^\circ_t))^2]$, and Proposition~\ref{generalization} implies that~:
$$
\E[(\mathfrak{d}^\circ_t)^2] \le \frac{C}{t^{(d+1)/2}}.
$$
This exponent can in fact be improved to $d/2+1$, as we will see in the end of the last section.

We now present a second variation around Theorem~\ref{main:d>}. One drawback of this theorem is that is is often problematic to evaluate $\rN(f)$, when it is not simply infinite. For a particular class of functionals, we show below that one can get improved results with only minor changes in the method of proof.

For some $s \ge 0$, we say that a function $g(X,\omega)$ \emph{depends only on the trajectory up to time} $s$ if one can write it as a function of the sites visited up to time $s$, together with their neighbouring conductances, or more precisely if one can write $g(X,\omega)$ as
$$
g((X_u)_{u \le s},(\omega_{X_u + e})_{u \le s}),
$$
where we understand that $e$ ranges in the set of edges adjacent to $0$. We say that such a function is \emph{translation invariant} if moreover, for any $x \in \Z^d$~:
$$
g((x + X_u)_{u \le s},(\omega_{X_u + e})_{u \le s}) = g((X_u)_{u \le s},(\omega_{X_u + e})_{u \le s}).
$$
We write $\var(f)$ for the variance of the function $f$ with respect to the measure $\P$.
\begin{prop}
\label{gtraj}
Under assumption $(\mathfrak{I})$, there exists $C > 0$ such that, for any bounded function $g$ that depends only on the trajectory up to time $s$ and is translation invariant, if $f(\omega) = \EEo_0[g]$, then for any $t > 0$~:
\begin{eqnarray*}
\var(f_t) \le C \ \|g \|_\infty^2 \ln_+\left(\frac{s+t}{s}\right)^{-2} & \text{if } d = 4 , \\
\var(f_t) \le C \ \frac{\|g \|_\infty^2 (s+t)^2}{t^{d/2}} & \text{if } d \ge 5 .
\end{eqnarray*}
\end{prop}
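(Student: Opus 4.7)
My plan is to mimic the proof of Theorem~\ref{main:d>} (combining Theorem~\ref{martmeth} with Proposition~\ref{decaygeneral}) but to adapt the martingale step of Section~\ref{s:martmeth} so as to exploit the specific form of $f$. The starting point is a representation of $f_t$ as a single expectation over a walk of length $t+s$. Using the Markov property at time $t$ together with the translation invariance of $g$ (applied with $x=-X_t$ to move the inner trajectory into absolute coordinates), one obtains
$$
f_t(\omega) = \EEo_0\!\left[ g\bigl((X_{t+u})_{u \le s},(\omega_{X_{t+u} + e})_{u \le s}\bigr) \right],
$$
where $(X_u)_{u \le t+s}$ is a walk started from the origin in the environment $\omega$. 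Set $\tilde f = f - \E[f] = \EEo_0[\tilde g]$ with $\tilde g = g - \E[f]$, so that $\E[\tilde f] = 0$ and $\|\tilde g\|_\infty \le 2\|g\|_\infty$.

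Next, I would adapt the martingale decomposition of Section~\ref{s:martmeth}: writing $\Delta_k(t) = \E[S_n(\tilde f_t)\mid\mathcal{F}_k] - \E[S_n(\tilde f_t)\mid\mathcal{F}_{k-1}]$ and splitting according to whether the extended walk of length $t+s$ visits $\mathcal{V}(e_k)$ or not. The crucial simplification, compared to Theorem~\ref{martmeth}, is that there is no need to further condition on the endpoint of the walk to isolate the dependence on $\omega_{e_k}$, and moreover the analog of the term $A_k(t)$ in~(\ref{akt}), corresponding to the non-visit event, vanishes identically. Indeed, on the event that $(X_u)_{0\le u\le t+s}$ avoids $\mathcal{V}(e_k)$, the walk never traverses $e_k$ (so one can couple the walks under $\cro_k$ and $\cro_{k-1}$ to agree on this event), and no position $X_{t+u}$ for $u\le s$ lies in $\mathcal{V}(e_k)$ (so $\omega_{e_k}$ does not appear in the conductance argument $(\omega_{X_{t+u}+e})_{u\le s}$ of $\tilde g$). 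Consequently, $\tilde g$ evaluated along the coupled trajectory yields the same value in both environments, and the non-visit contribution to $\Delta_k(t)$ cancels. The analog of $A'_k(t)$ is then controlled by invoking Lemma~\ref{lem:martmeth} with the time horizon $t+s$ in place of $t$ and following Proposition~\ref{prop:martmeth} line by line. The outcome is
$$
\mclN(\tilde f_t) \le c \|g\|_\infty^2 (1+t+s)^2.
$$

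Plugging this into Proposition~\ref{decaygeneral} applied to $\tilde f$, and using that $\var(f_t) = \E[\tilde f_t^2]$ (since $\E[f_t] = \E[f]$ is constant), yields
$$
\var(f_t) \le C \|g\|_\infty^2 \left(\int_1^t (1+u+s)^{-4/d}\,\d u\right)^{-d/2}.
$$
For $d \ge 5$, bounding the integrand from below by its value at $u = t$ gives an integral of order $t(1+t+s)^{-4/d}$ (for $t \ge 2$), whence $\var(f_t) \le C \|g\|_\infty^2 (s+t)^2/t^{d/2}$, with the trivial bound $\var(f_t) \le \|g\|_\infty^2$ handling the regime $t \le 2$. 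For $d = 4$, the integral equals $\ln((1+t+s)/(2+s))$, which controls $\ln_+((s+t)/s)$ from below up to a multiplicative constant on the regime where the latter exceeds $1$ (the trivial bound handles the complementary regime), delivering the announced logarithmic rate. The main obstacle is the martingale step, namely verifying that the non-visit contribution vanishes; once this is established, the rest is a mechanical transposition of Section~\ref{s:martmeth} with $t$ uniformly replaced by $t+s$.
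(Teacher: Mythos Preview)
Your proposal is correct and follows essentially the same route as the paper's own proof: represent $f_t$ as an expectation over a walk of length $t+s$, run the martingale decomposition of Section~\ref{s:martmeth} with the hitting event $\tilde{\mathcal A}'_t$ taken up to time $t+s$, observe that the non-visit contribution vanishes (this is exactly the ``remarkable property'' the paper isolates), bound the visit contribution as in Proposition~\ref{prop:martmeth} to get $\mclN(\ov{f}_t)\le C\|g\|_\infty^2(1+t+s)^2$, and feed this into Proposition~\ref{decaygeneral}. The only cosmetic difference is that you center $f$ explicitly via $\tilde f$, whereas the paper writes $\ov f$; the final integration and the handling of small times by the trivial bound $\var(f_t)\le\|g\|_\infty^2$ are also the same.
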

\begin{proof}
We begin by recalling the definition of $f_t$, namely
$$
f_t(\omega) = \EEo_0\left[\EE^{\theta_{X_t} \omega}_0 [g((X_u)_{u \le s},(\omega_{X_u + e})_{u \le s})]\right].
$$
Recall also that the law of $X + x$ under $\PP^{\theta_x \omega}_0$ is the same as the one of $X$ under $\PPo_x$, therefore we have, using the translation invariance of $g$~:
$$
\EE^{\theta_{X_t} \omega}_0 [g((X_u)_{u \le s},(\omega_{X_u + e})_{u \le s})] = \EEo_{X_t} [g((X_u)_{u \le s},(\omega_{X_u + e})_{u \le s})].
$$
The Markov property now leads to
\begin{equation}
\label{g(t)}
f_t(\omega) = \EEo_0\left[g(t)\right],
\end{equation}
where, to ease notation, we wrote $g(t)$ for 
$$
g((X_u)_{t\le u \le t+s},(\omega_{X_u + e})_{t \le u \le t+s}).
$$

We define, as in section~\ref{s:martmeth}, $M_k(t) = \E[S_n(f_t) | \mathcal{F}_k]$, and the martingale increments $\Delta_k(t) = M_k(t) - M_{k-1}(t)$. Writing $\ov{f}$ for $f - \E[f]$, we obtain~:
$$
S_n(\ov{f}_t) = S_n(f_t) - \E[S_n(f_t)] = \sum_{k = 0}^{+ \infty} \Delta_k(t).
$$
One has the following expression for $\Delta_k(t)$~:
$$
\Delta_k(t) =  \sum_{x \in B_n} \E_\sigma\left[ \EE^{\cro_k}_x[g(t)] - \EE^{\cro_{k-1}}_x[g(t)] \right] ,
$$
which was the starting point of the computations of section~\ref{s:martmeth}. However, we now introduce the events~:
$$
\td{\mathcal{A}}_t = \left\{ \{X_u, 0\le s \le t + s\} \cap \mathcal{V}(e_k) = \emptyset \right\},
$$
and $\td{\mathcal{A}}'_t$ the complementary event. Note the difference with (\ref{defAt}-\ref{defA't}) : we consider intersections with $\mathcal{V}(e_k)$ up to time $t + s$, and not just up to time $t$. Then one can decompose $\Delta_k(t)$ as $\td{A}_k(t)+\td{A}'_k(t)$, where~:
\begin{equation*}
\td{A}_k(t) =  \sum_{x \in B_n} \E_\sigma\big[\EE^{\cro_{k}}_x[g(t) \1_{\td{\mathcal{A}}_t}] - \EE^{\cro_{k-1}}_x[g(t) \1_{\td{\mathcal{A}}_t}]\big] 
\end{equation*}
and
\begin{equation*}
\td{A}'_k(t) = \sum_{x \in B_n} \E_\sigma\big[\EE^{\cro_{k}}_x[g(t) \1_{\td{\mathcal{A}}'_t}] - \EE^{\cro_{k-1}}_x[g(t) \1_{\td{\mathcal{A}}'_t}]\big].
\end{equation*}

The remarkable property is that one has~:
$$
\EE^{\cro_{k}}_x[g(t) \1_{\td{\mathcal{A}}_t}] = \EE^{\cro_{k-1}}_x[g(t) \1_{\td{\mathcal{A}}_t}].
$$
Indeed, the event $\td{\mathcal{A}}_t$ itself does not depend on the value of $\omega_{e_k}$, and on this event, the law of the walk up to time $t+s$ does not either, nor does $g(t)$ by the definition of $g$ as a function that depends only on the trajectory up to time $s$.
Therefore $\td{A}_k(t)$ simply vanishes in that case. The evaluation of $\td{A}'_k(t)$ follows the same line as for Proposition~\ref{prop:martmeth}, and one finally obtains that there exists $C > 0$ (independent of $g$, $s$ and $t$) such that~:
$$
\mclN(\ov{f}_t) \le C \|g\|_\infty^2 (s + t + 1)^2.
$$
From this estimate, and using Proposition~\ref{decaygeneral}, it comes that, for any $t > 1$~:
$$
\E\left[(\ov{f}_t)^2\right] \le C \|g\|_\infty^2  \left( \int_1^{t} (s + u + 1)^{-4/d} \ \d u \right)^{-d/2}.
$$
Elementary computations then lead to the claims of the proposition, using the fact that $\var(f_t) \le \|g\|_\infty^2$ to treat smaller times. 
\end{proof}
Proposition~\ref{gtraj} is used in \cite{scaling} in order to obtain the scaling limit of the random walk among random traps.
%
%
%
%%%%%%%%%%%%%%%%%%%%%%%%%%%%%%%%%%%%%%%%%%%%%%%%%%%%%
%%%%%%%%%  section Applications  %%%%%%%%%%%%%%%%%%%%
%%%%%%%%%%%%%%%%%%%%%%%%%%%%%%%%%%%%%%%%%%%%%%%%%%%%%
%
%
%
\section{Central limit theorems}
\label{s:clt}
\setcounter{equation}{0}
We begin with the proof of Theorem \ref{tcl}, which shows the equivalence between the algebraic decay of the variance of $f_t$ and a particular behaviour of the spectral measure $e_f$ around $0$.
\begin{proof}[Proof of Theorem \ref{tcl}]
Note that the variance of $f_t$ can be rephrased in terms of the spectral measure $e_f$ as  
$$
\E[(f_t)^2] = \int e^{-2 \lambda t} \d e_f(\lambda) .
$$
The variance decay given in point (\ref{tcl:equiv1}) of Theorem \ref{tcl} is therefore equivalent to~:
\begin{equation}
\label{laplace}
\int e^{-\lambda t} \d e_f(\lambda) \le \frac{C}{t^\alpha}.
\end{equation}
We will show that equation~(\ref{laplace}) is equivalent to point (\ref{tcl:equiv3}) of the theorem.

Assume that equation~(\ref{laplace}) holds, and let 
$$
\chi(t) = 
\left|
\begin{array}{ll}
0 & \text{if } 0 \le t < 1 \\
1-e^{-t} & \text{if } t \ge 1 .
\end{array}
\right.
$$
Multiplying inequality~(\ref{laplace}) by $\chi(\delta t)$ and integrating over $t \in [0,+\infty)$, it comes that
\begin{eqnarray}
\label{laplint1}
\int_{t=0}^{+\infty} \chi(\delta t) \int e^{-\lambda t} \d e_f(\lambda) \d t & \le & C \int_0^{+\infty} \chi(\delta t) t^{-\alpha} \d t \notag \\
& \le & C \delta^{\alpha-1} \int_1^{+\infty} \chi(u) u^{-\alpha} \d u,
\end{eqnarray}
where the integral is finite, as $\alpha > 1$ and $\chi$ is bounded. Now, $\chi$ has been chosen so that 
$$
\int _{t=0}^{+\infty} e^{-\lambda t} \chi(\delta t) \d t = \frac{e^{-\lambda/\delta}}{\lambda} - \frac{e^{-(\lambda+\delta)/\delta}}{\lambda + \delta} \ge \frac{1}{2 e \lambda} \1_{[0,\delta]}(\lambda),
$$
so we get, using Fubini's theorem~:
\begin{equation}
\label{laplint2}
\int_{t=0}^{+\infty} \int \chi(\delta t) e^{-\lambda t} \d e_f(\lambda) \d t \ge \int_{[0,\delta]} \frac{1}{2 e \lambda} \d e_f(\lambda).
\end{equation}
Combining (\ref{laplint1}) and (\ref{laplint2}), we obtain~:
\begin{equation}
\label{equiv22}
\int_{[0,\delta]} \frac{1}{\lambda} \d e_f(\lambda) \le C \delta^{\alpha-1}.
\end{equation}
Reciprocally, assume that (\ref{equiv22}) holds. Note that 
$$
\lambda e^{-\lambda t} = \int_\lambda^{+\infty} (\delta t-1)e^{-\delta t} \d \delta .
$$
It comes that
\begin{eqnarray*}
\int e^{-\lambda t} \d e_f(\lambda) & = & \int \int_{\delta =0}^{+\infty} (\delta t-1)e^{-\delta t} \frac{1}{\lambda} \1_{\{\lambda \le \delta \}} \ \d \delta  \ \d e_f(\lambda) \\
& = & \int_{0}^{+\infty} (\delta t-1)e^{-\delta t} \int_{[0,\delta]} \frac{1}{\lambda} \d e_f(\lambda) \  \d \delta ,
\end{eqnarray*}
the use of Fubini's theorem being justified by the fact that the integrand is bounded in absolute value by $(\delta t + 1) e^{-\delta t}/{\lambda}$, which is integrable (the total mass of $e_f$ is $\|f\|_2^2$, assumed to be finite). Using inequality (\ref{equiv22}), the former is bounded by
$$
C \int_{0}^{+\infty} (\delta t-1)e^{-\delta t} \delta^{\alpha-1} \d \delta = \frac{C}{t^\alpha} \int_{0}^{+\infty} (u-1)e^{-u} u^{\alpha-1} \d u,
$$
which proves that (\ref{laplace}) holds.
\end{proof}
We now present some results taken from \cite{kipvar}. Although the authors give a complete proof for the discrete time case, they provide less details in the continuous time setting. For convenience of the reader, and also because we will need some of these details in the sequel, we provide here part of the proof of these results. We recall that the definition of $Z_t$ was given in (\ref{defZt}).
%
%%%%%%%   Kipvar %%%%%%%
%
\begin{thm}[\cite{kipvar}]
\label{thmkipvar}
If
\begin{equation}
\label{condkipvarrappel}
\int \frac{1}{\lambda} \ \d e_f(\lambda) < + \infty,
\end{equation}
then there exist $(M_t)_{t \ge 0}$, $(\xi_t)_{t \ge 0}$ such that $Z_t = M_t + \xi_t$, where $(M_t)$ is a martingale with stationary increments under $\ov{\P}$ (and the natural filtration), and $(\xi_t)$ is such that~:
\begin{equation}
\label{e:xitendvers0}
\frac{1}{t} \ov{\E}[(\xi_t)^2] = 2 \int \frac{1-e^{-\lambda t}}{\lambda^2 t} \ \d e_f(\lambda) \xrightarrow[t \to + \infty]{} 0.
\end{equation}
Moreover, if $\P$ is ergodic, $(\eps Z_{t/\eps^2})_{t \ge 0}$ converges, as $\eps$ goes to $0$, to a Brownian motion of variance
$$
\sigma^2 = \ov{\E}[(M_1)^2] = 2 \int \frac{1}{\lambda} \ \d e_f(\lambda).
$$
\end{thm}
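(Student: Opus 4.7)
The plan is to follow the classical Kipnis–Varadhan approach: approximate a solution of the Poisson equation $-\L u = f$ by the resolvent $u_\mu = R_\mu f$ for $\mu > 0$, extract a martingale via Dynkin's formula, and let $\mu \to 0$. Since $-\L u_\mu + \mu u_\mu = f$, Dynkin's formula applied under $\ov{\P}$ gives that
$$M_t^\mu = u_\mu(\omega(t)) - u_\mu(\omega(0)) - \int_0^t \L u_\mu(\omega(s))\, \d s$$
is a square-integrable martingale with stationary increments under $\ov{\P}$ and the natural filtration. Substituting $\L u_\mu = \mu u_\mu - f$ produces the decomposition $Z_t = M_t^\mu + \xi_t^\mu$, where
$$\xi_t^\mu = u_\mu(\omega(0)) - u_\mu(\omega(t)) + \mu \int_0^t u_\mu(\omega(s))\, \d s.$$
The carré du champ identity together with stationarity of $\ov{\P}$ yields $\ov{\E}[(M_t^\mu)^2] = 2t\, \cE(u_\mu, u_\mu)$, which by spectral calculus equals $2t\int \lambda (\mu+\lambda)^{-2}\, \d e_f(\lambda)$.

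Next I would pass to the limit $\mu \to 0$. Applied to the difference of two resolvents, the same carré du champ identity gives
$$\ov{\E}[(M_t^\mu - M_t^{\mu'})^2] = 2t\int \lambda \Bigl(\frac{1}{\mu+\lambda} - \frac{1}{\mu'+\lambda}\Bigr)^2 \d e_f(\lambda),$$
which tends to $0$ by dominated convergence, the integrand being dominated by $4/\lambda$, integrable thanks to (\ref{condkipvarrappel}). Hence $(M_t^\mu)$ is Cauchy in $L^2(\ov{\P})$; the $L^2$-limit $M_t$ inherits the martingale property and stationarity of increments, with $\ov{\E}[M_t^2] = 2t \int \lambda^{-1}\, \d e_f(\lambda) = t\sigma^2$. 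Setting $\xi_t = Z_t - M_t$ defines the sought decomposition, with $\xi_t^\mu \to \xi_t$ in $L^2$. To identify $\ov{\E}[\xi_t^2]$ I would expand $\ov{\E}[(\xi_t^\mu)^2]$ into three pieces, each evaluable via the spectral representation $\ov{\E}[u_\mu(\omega(s)) u_\mu(\omega(r))] = \int e^{-\lambda|s-r|}(\mu+\lambda)^{-2}\, \d e_f(\lambda)$: the quadratic term in $u_\mu(\omega(0)) - u_\mu(\omega(t))$ contributes $2\int (1-e^{-\lambda t})(\mu+\lambda)^{-2}\, \d e_f(\lambda)$; the two cross terms with the integral cancel by the time-reversal symmetry $\ov{\E}[u_\mu(\omega(0)) u_\mu(\omega(s))] = \ov{\E}[u_\mu(\omega(t)) u_\mu(\omega(t-s))]$; and the purely $\mu^2$-term is dominated by $t/\lambda$ and vanishes pointwise. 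Passing $\mu \to 0$ yields the identity in (\ref{e:xitendvers0}), and a further dominated-convergence argument (dominator $1/\lambda$) gives $t^{-1}\ov{\E}[\xi_t^2] \to 0$.

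For the invariance principle, ergodicity of $\P$ forces $(\omega(t))$ to be ergodic under $\ov{\P}$, so $M_t$ is a square-integrable martingale with stationary ergodic increments. The functional martingale central limit theorem then gives convergence of $(\eps M_{t/\eps^2})$ to a Brownian motion of variance $\ov{\E}[M_1^2] = \sigma^2$, and the $L^2$-smallness of $\xi_t$ transfers this conclusion directly to $(\eps Z_{t/\eps^2})$. I expect the delicate step to be the computation of $\ov{\E}[\xi_t^2]$: the cancellation of the cross terms crucially uses reversibility, and the passage to the limit $\mu \to 0$ in each of the three pieces must be justified uniformly. The remaining ingredients—Dynkin's formula, $L^2$-completeness of the space of martingales, and the classical martingale CLT—are standard.
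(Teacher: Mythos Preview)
Your proposal is correct and follows essentially the same resolvent-approximation approach as the paper: define $u_\mu=R_\mu f$, use Dynkin's formula to extract a martingale, and pass to the limit $\mu\to 0$ via spectral calculus and dominated convergence. The only cosmetic difference is that the paper splits your remainder $\xi_t^\mu$ into two pieces $\xi_t^\eps=u_\eps(\omega(0))-u_\eps(\omega(t))$ and $\eta_t^\eps=\eps\int_0^t u_\eps(\omega(s))\,\d s$ from the outset and treats their convergence separately, whereas you keep them together and separate only when computing $\ov{\E}[(\xi_t^\mu)^2]$; the paper also omits the invariance-principle argument you sketch, referring instead to \cite{kipvar}.
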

\noindent \textbf{Remark.} We shall only give the proof of the first part of the theorem. We refer to \cite{kipvar} for a proof of the invariance principle. 
\begin{proof}
Let $\eps > 0$, and let $u_{\eps}$ be such that
\begin{equation}
\label{defueps}
(-\L + \eps) u_\eps = f.
\end{equation}
We define $(M_t^\eps)$, $(\xi_t^\eps)$, $(\eta_t^\eps)$ by :
\begin{eqnarray*}
M_t^\eps & = & u_\eps(\omega(t)) - u_\eps(\omega(0)) - \int_0^t \L u_\eps(\omega(s)) \d s \\
\xi_t^\eps & = & - u_\eps(\omega(t)) + u_\eps(\omega(0)) \\
\eta_t^\eps & = & \int_0^t \eps u_\eps(\omega(s)) \d s.
\end{eqnarray*}
Using the definition of $u_\eps$ in (\ref{defueps}), one can see that $Z_t = M_t^\eps + \xi_t^\eps + \eta_t^\eps$. Moreover, $(M_t^\eps)_{t \ge 0}$ is a martingale with stationary increments under $\ov{\P}$. We first show that $\eta^\eps_t$ tends to $0$ in $L^2(\ov{\P})$ as $\eps$ goes to $0$. We will then prove that $(M_t^\eps)$, $(\xi_t^\eps)$ have limits in $L^2(\ov{\P})$ as $\eps$ goes to zero, by checking that they are Cauchy sequences. Writing these two limits $M_t$ and $\xi_t$ respectively, we will have $Z_t = M_t + \xi_t$.

We begin by showing that $\eta^\eps_t$ tends to $0$ as $\eps$ goes to $0$. Note that
\begin{eqnarray*}
\ov{\E}\left[ \left( \int_0^t g (\omega(s)) \d s \right)^2 \right] & = & 2 \int_{0 \le s \le u \le t} \ov{\E}[ g(\omega(s)) g(\omega(u))] \d s \ \d u\\
& = & 2 \int_{0 \le s \le u \le t} \ov{\E}[ g(\omega(0)) g(\omega(u-s))] \d s \ \d u,
\end{eqnarray*}
using the stationarity of $(\omega(s))$. By a change of variables (and using the fact that $\ov{\E} = \E.\EEo_0$), the latter becomes
\begin{equation}
\label{petitot2}
2 \int_0^t (t-s) \E[ g(\omega) g_s(\omega)] \d s.
\end{equation}
Together with the fact that $u_\eps = (-\L + \eps)^{-1} f$, this enables us to compute
\begin{eqnarray*}
\ov{\E}[(\eta_t^{\eps})^2] & = & 2 \int \int_0^t (t-s) e^{-\lambda s} \d s \left( \frac{\eps}{\lambda+\eps} \right)^2 \d e_f(\lambda) \\
& = & 2 \int \frac{e^{-\lambda t} - 1 + \lambda t}{\lambda^2} \frac{\eps^2}{(\lambda + \eps)^2} \d e_f(\lambda).
\end{eqnarray*}
As $\lambda \mapsto (e^{-\lambda t} - 1 + \lambda t)/\lambda^2$ remains bounded (and does not depend on $\eps$), the dominated convergence theorem shows that
$$
\ov{\E}[(\eta_t^{\eps})^2] \xrightarrow[\eps \to 0]{} 0.
$$

We now turn to the study of $(M_t^\eps)$. We will show that
\begin{equation}
\label{computM}
\ov{\E}\left[ \left( g(\omega(t)) - g(\omega(0)) - \int_0^t \L g (\omega(s)) \d s \right)^2 \right] = -2 t \E[g(\omega) \L g (\omega)].
\end{equation}
Indeed, as noted before, the process
$$
g(\omega(t)) - g(\omega(0)) - \int_0^t \L g (\omega(s)) \d s
$$
is a martingale with stationary increments under $\ov{\P}$, so the expression on the left hand side of equation (\ref{computM}) is equal to $Ct$ for some $C \ge 0$. $C$ can be determined by computing the derivative at $t = 0$. Note that the the left hand side of (\ref{computM}) can be decomposed into
\begin{multline*}
\ov{\E}\left[ \left( g(\omega(t)) - g(\omega(0)) \right)^2\right] - 2 \ov{\E}\left[ \left( g(\omega(t)) - g(\omega(0)) \right) \int_0^t \L g (\omega(s)) \d s \right]  \\ + \ov{\E}\left[ \left(\int_0^t \L g (\omega(s)) \d s \right)^2 \right].
\end{multline*}
Due to the reversibility of the process, (the cadlag modification of) $(\omega(t-s))_{0 \le s \le t}$ has the same law under $\ov{\P}$ as $(\omega(s))_{0 \le s \le t}$. But under this time reversal, the integral remains unchanged while $g(\omega(t)) - g(\omega(0))$ is changed into its opposite. The double product is therefore equal to zero. As for the square of the integral, the computation that lead to equation~(\ref{petitot2}) shows that it is bounded by a constant times $t^2$. In particular, its derivative at $0$ is equal to zero, and there is only the first term left. Its derivative at $0$ is known to be twice the Dirichlet form $\cE(g,g) = -\E[g(\omega) \L g (\omega)]$.
It comes that
\begin{eqnarray*}
\ov{\E}[(M_t^{\eps_1} - M_t^{\eps_2})^2] & = & \int 2 t \lambda \left( \frac{1}{\lambda+\eps_1} - \frac{1}{\lambda + \eps_2} \right)^2 \d e_f(\lambda) \\
& = & \int \frac{2 t \lambda (\eps_2-\eps_1)^2}{(\lambda+\eps_1)^2(\lambda + \eps_2)^2} \d e_f(\lambda).
\end{eqnarray*}
If, say, $\eps_2 > \eps_1$, then the integrand is bounded by 
$$
\frac{2 t \lambda (\eps_2)^2}{\lambda^2 (\eps_2)^2} = \frac{2t}{\lambda},
$$
which is integrable due to assumption (\ref{condkipvarrappel}). So dominated convergence theorem applies, and as
$$
\frac{2 t \lambda (\eps_2-\eps_1)^2}{(\lambda+\eps_1)^2(\lambda + \eps_2)^2} \le \frac{2 t \lambda (\eps_2)^2}{\lambda^4} \xrightarrow[\eps_1,\eps_2 \to 0]{} 0,
$$
we get
$$
\ov{\E}[(M_t^{\eps_1} - M_t^{\eps_2})^2] \xrightarrow[\eps_1,\eps_2 \to 0]{} 0.
$$
We write $M_t$ for the $L^2(\ov{\P})$ limit of $(M_t^\eps)_{\eps > 0}$ as $\eps$ goes to $0$.

What is left is to check the convergence of $\xi^\eps_t$.
Similarly, it comes that
\begin{eqnarray*}
\ov{\E}[(\xi_t^{\eps_1} - \xi_t^{\eps_2})^2] & = & \int 2 (1-e^{-\lambda t}) \left( \frac{1}{\lambda+\eps_1} - \frac{1}{\lambda + \eps_2} \right)^2 \d e_f(\lambda) \\
& = & \int \frac{2 (1-e^{-\lambda t}) (\eps_2-\eps_1)^2}{(\lambda+\eps_1)^2(\lambda + \eps_2)^2} \d e_f(\lambda).
\end{eqnarray*}
and we show the same way, using dominated convergence theorem, that the latter converges to $0$ as $\eps_1,\eps_2$ go to $0$. We write $\xi_t$ for the limit. We have
$$
\ov{\E}[(\xi_t^{\eps})^2] = \int 2 (1-e^{-\lambda t}) \left( \frac{1}{\lambda+\eps} \right)^2 \d e_f(\lambda).
$$
Letting $\eps$ go to $0$, the $L^2$ convergence on one hand, and the monotone convergence theorem on the other, ensure that~:
\begin{equation*}
%\label{correcteur}
\frac{1}{t} \ov{\E}[(\xi_t)^2] = 2\int \frac{1-e^{-\lambda t}}{\lambda^2 t} \ \d e_f(\lambda).
\end{equation*}
Using the fact that for any $x \ge 0$, $1-e^{-x} \le x$, the integrand above is bounded by $1/\lambda$, which is assumed to be integrable. Dominated convergence theorem thus implies that the  integral above converges to $0$ as $t$ tends to infinity.
\end{proof}
Under a stronger assumption, the next proposition gives an estimate of the speed of convergence to $0$ in equation~(\ref{e:xitendvers0}).
\begin{prop}
\label{propdecay}
Under one of the equivalent conditions (\ref{tcl:equiv1}), (\ref{tcl:equiv3}) of Theorem~\ref{tcl}, there exists $C > 0$ such that 
\begin{equation}
\label{decayxi}
\frac{1}{t} \ov{\E}[(\xi_t)^2] = 2 \int \frac{1-e^{-\lambda t}}{\lambda^2 t} \ \d e_f(\lambda) \le \frac{C}{\psi_\alpha(t)},
\end{equation}
where $\psi_\alpha$ is defined in (\ref{defpsi}).
\end{prop}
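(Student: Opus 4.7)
The plan is to bound the integral on the right-hand side of~(\ref{decayxi}) by splitting the domain at the frequency threshold $\lambda = 1/t$, exploiting the elementary inequality $1-e^{-\lambda t} \le \min(\lambda t, 1)$. On the low-frequency piece $[0,1/t]$, use the bound $1-e^{-\lambda t} \le \lambda t$ so that the contribution reduces to
$$
2\int_{[0,1/t]} \frac{1}{\lambda}\,\d e_f(\lambda) \le 2 C\,t^{-(\alpha-1)},
$$
which is immediate from item~(\ref{tcl:equiv3}) of Theorem~\ref{tcl} applied with $\delta=1/t$. On the high-frequency piece $(1/t,\infty)$, use $1-e^{-\lambda t}\le 1$ to reduce matters to controlling $\frac{2}{t}\int_{(1/t,\infty)} \lambda^{-2}\,\d e_f(\lambda)$.

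To handle this high-frequency integral, introduce the nondecreasing cumulative function $F(\delta) = \int_{[0,\delta]}\lambda^{-1}\,\d e_f(\lambda)$, which by hypothesis satisfies $F(\delta)\le C\delta^{\alpha-1}$ for all $\delta>0$, and which is bounded on $[0,\infty]$ because $\alpha>1$ implies $F(\infty) = \int \lambda^{-1}\,\d e_f(\lambda) <\infty$. Since $\lambda^{-2}\,\d e_f(\lambda) = \lambda^{-1}\,\d F(\lambda)$, writing $\lambda^{-1} = \int_\lambda^\infty \mu^{-2}\,\d\mu$ and applying Fubini yields
$$
\int_{(1/t,\infty)} \frac{1}{\lambda^2}\,\d e_f(\lambda) \le \int_{1/t}^{\infty} \frac{F(\mu)}{\mu^2}\,\d\mu.
$$
Splitting this last integral at some fixed point (say $\mu=1$), on $[1,\infty)$ it is $O(1)$ by boundedness of $F$, and on $[1/t,1]$ the bound $F(\mu)\le C\mu^{\alpha-1}$ makes it equal to $C\int_{1/t}^{1}\mu^{\alpha-3}\,\d\mu$.

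The three regimes in the definition of $\psi_\alpha$ then arise from the elementary computation
$$
\int_{1/t}^{1} \mu^{\alpha-3}\,\d\mu =
\begin{cases}
(2-\alpha)^{-1}\bigl(t^{2-\alpha}-1\bigr) & \text{if } 1<\alpha<2,\\
\ln t & \text{if } \alpha = 2,\\
(\alpha-2)^{-1}\bigl(1-t^{-(\alpha-2)}\bigr) & \text{if } \alpha>2,
\end{cases}
$$
and after multiplication by the prefactor $1/t$ one obtains respectively the orders $t^{-(\alpha-1)}$, $t^{-1}\ln t$, and $t^{-1}$, which match $1/\psi_\alpha(t)$ in each case. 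Combining with the low-frequency contribution $t^{-(\alpha-1)}$ from the first step (which is dominated by $1/\psi_\alpha(t)$ for $\alpha\ge 2$ and equal to it for $\alpha<2$) gives the announced bound. The calculation is routine; the only mild subtlety is the borderline $\alpha = 2$, where the logarithm produced by $\int \mu^{-1}\,\d\mu$ is exactly what makes $\psi_2(t)= t/\ln_+(t)$ the right normalization, and one must also treat $t\in[0,1]$ separately using the trivial bound $\ov{\E}[(\xi_t)^2]/t \le 2\int \lambda^{-1}\,\d e_f(\lambda) <\infty$, so the constant $C$ can be chosen uniformly in $t\ge 0$.
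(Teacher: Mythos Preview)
Your proof is correct and follows essentially the same route as the paper's: split at $\lambda=1/t$, use $1-e^{-\lambda t}\le \lambda t$ on the low-frequency piece to invoke condition~(\ref{tcl:equiv3}) directly, and on the high-frequency piece rewrite $\lambda^{-1}=\int_\lambda^\infty \mu^{-2}\,\d\mu$ and apply Fubini to reduce to the elementary integral $\int_{1/t}^1 \mu^{\alpha-3}\,\d\mu$. The paper makes a three-way split $[0,1/t)\cup[1/t,1)\cup[1,\infty)$ and bounds the tail $[1,\infty)$ via $\|f\|_2^2$ rather than via the finiteness of $F(\infty)$, but this is a cosmetic difference; the substance of the argument is identical.
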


\begin{proof}
The equality of the first two terms in (\ref{decayxi}) was given in Theorem~\ref{thmkipvar}. For the inequality, note first that for any $\alpha$, we have
$$
\psi_\alpha(t) \le t \qquad \text{and} \qquad \psi_\alpha(t) \le t^{\alpha-1}.
$$
Assuming that $t \ge 1$, we will decompose the interval of integration $\R_+$ into $[0,1/t) \cup [1/t,1) \cup [1,+\infty)$. As for any $x \ge 0$, $1-e^{-x} \le x$, we have
$$
\int_0^{1/t} \frac{1-e^{-\lambda t}}{\lambda^2 t} \ \d e_f(\lambda) \le C \int_0^{1/t} \frac{1}{\lambda} \ \d e_f(\lambda) \le \frac{C}{t^{\alpha-1}},
$$
using condition (\ref{tcl:equiv3}) of Theorem~\ref{tcl}. On the other hand,
$$
\int_1^{+\infty} \frac{1-e^{-\lambda t}}{\lambda^2 t} \ \d e_f(\lambda) \le \int_1^{+\infty} \frac{1}{t} \ \d e_f(\lambda) \le \frac{\|f\|_2^2}{t}.
$$
Now for the integral between $1/t$ and $1$, it is bounded from above by the following, on which we perform a kind of integration by parts~:
\begin{eqnarray*}
\int_{1/t}^1 \frac{1}{\lambda t} \frac{1}{\lambda} \ \d e_f(\lambda) & = & \int_{\lambda = 1/t}^1 \int_{\delta=\lambda}^{+\infty} \frac{1}{\delta^2 t} \frac{1}{\lambda} \ \d \delta \ \d e_f(\lambda) \\
& = & \int_{\delta=1/t}^{+\infty} \frac{1}{\delta^2 t} \int_{\lambda=1/t}^{\min(\delta,1)}  \frac{1}{\lambda} \ \d e_f(\lambda) \ \d \delta.
\end{eqnarray*}
Using property (\ref{tcl:equiv3}) of Theorem \ref{tcl} once again, it comes that the latter is bounded by
$$
C \int_{\delta=1/t}^{+\infty} \frac{1}{\delta^2 t} \min(\delta,1)^{\alpha-1} \d \delta.
$$
This integral can be decomposed into two parts~:
$$
\int_{\delta=1}^{+\infty} \frac{1}{\delta^2 t} \d \delta = \frac{1}{t},
$$
and
$$
\int_{\delta=1/t}^{1} \frac{\delta^{\alpha-3}}{t}  \d \delta = 
\left|
\begin{array}{ll}
\frac{1}{(\alpha-2)t} \left( 1 - \frac{1}{t^{\alpha - 2}} \right) & \text{if } \alpha \neq 2 \\
\frac{\ln(t)}{t} & \text{if } \alpha = 2. 
\end{array}
\right.
$$
which proves the proposition.
\end{proof}
\begin{prop}
\label{decayZ2}
Under one of the equivalent conditions (\ref{tcl:equiv1}), (\ref{tcl:equiv3}) of Theorem~\ref{tcl}, there exists $C > 0$ such that 
\begin{equation*}
%\label{decayZ2}
0 \le \sigma^2 - \frac{1}{t} \ov{\E}\left[\left( Z_t \right)^2\right] \le \frac{C}{\psi_\alpha(t)}.
\end{equation*}
where $\sigma$ is defined by (\ref{defsigma}).
\end{prop}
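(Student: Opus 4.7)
The strategy is to compute $\frac{1}{t}\ov{\E}[Z_t^2]$ directly in the spectral representation and observe that the discrepancy from $\sigma^2$ is \emph{exactly} $\frac{1}{t}\ov{\E}[\xi_t^2]$, so the proposition reduces without loss to Proposition~\ref{propdecay}. Nonnegativity then comes for free from the nonnegativity of the integrand in the spectral formula for $\ov{\E}[\xi_t^2]$.

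Concretely, I would start from the identity already derived in the course of proving Theorem~\ref{thmkipvar} (see (\ref{petitot2})), which applied with $g = f$ gives
$$
\ov{\E}[Z_t^2] = 2 \int_0^t (t-s)\, \E[f(\omega)\, f_s(\omega)]\, \d s.
$$
Inserting the spectral representation $\E[f\cdot f_s] = \int e^{-\lambda s}\, \d e_f(\lambda)$ and swapping the integrals (Fubini is legitimate: the integrand on the right is pointwise nonnegative, and $\int \lambda^{-1}\, \d e_f(\lambda) < \infty$ under condition~(\ref{tcl:equiv3}), which combined with the bound $\lambda t - 1 + e^{-\lambda t} \le \lambda t$ yields an integrable dominant), the inner time integral evaluates to $(\lambda t - 1 + e^{-\lambda t})/\lambda^2$. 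Splitting off the $\lambda t$ piece and dividing by $t$ produces
$$
\frac{1}{t}\ov{\E}[Z_t^2] = 2 \int \frac{1}{\lambda}\, \d e_f(\lambda) - 2 \int \frac{1-e^{-\lambda t}}{\lambda^2 t}\, \d e_f(\lambda).
$$

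The first integral is $\sigma^2$ by (\ref{defsigma}); the second is $\frac{1}{t}\ov{\E}[\xi_t^2]$ by the identity stated in Theorem~\ref{thmkipvar}. Hence
$$
\sigma^2 - \frac{1}{t}\ov{\E}[Z_t^2] = \frac{1}{t}\ov{\E}[\xi_t^2] \ge 0,
$$
and the upper bound $C/\psi_\alpha(t)$ is precisely the content of Proposition~\ref{propdecay}.

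There is essentially no hard step: the argument is a short bookkeeping calculation that matches two spectral expressions. The only mild point of care is verifying the hypothesis of Theorem~\ref{thmkipvar}, namely $\int \lambda^{-1}\, \d e_f(\lambda) < \infty$; this follows from condition~(\ref{tcl:equiv3}) applied with $\delta = 1$ (controlling the contribution near $0$) together with the crude bound $\int_{[1,\infty)} \lambda^{-1}\, \d e_f(\lambda) \le \|f\|_2^2$.
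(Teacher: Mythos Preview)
Your proof is correct and reaches the same endpoint as the paper --- namely the identity
\[
\sigma^2 - \frac{1}{t}\ov{\E}[Z_t^2] = \frac{1}{t}\ov{\E}[\xi_t^2],
\]
followed by an appeal to Proposition~\ref{propdecay} --- but you get there by a different route. The paper expands $\ov{\E}[M_t^2] = \ov{\E}[(Z_t-\xi_t)^2]$, uses $\ov{\E}[M_t^2] = t\sigma^2$ from stationarity of increments, and then kills the cross term $\ov{\E}[Z_t\xi_t]$ by a time-reversal argument based on reversibility (under $s \mapsto t-s$, $Z_t$ is invariant while $\xi_t$ flips sign). You instead compute $\ov{\E}[Z_t^2]$ directly from (\ref{petitot2}) and the spectral representation, and read off the identity by inspection. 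Your approach is slightly more elementary in that it avoids the reversibility argument altogether; the paper's approach is a bit more structural, making explicit why the martingale and fluctuation parts decouple. Both are equally short, and your Tonelli justification is fine (nonnegativity of $(t-s)e^{-\lambda s}$ suffices; the dominant you mention is not even needed).
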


\begin{proof}
Note that
$$
\ov{\E}[(M_t)^2] = \ov{\E}[(Z_t - \xi_t)^2] = \ov{\E}[(Z_t)^2] - 2 \ov{\E}[Z_t \xi_t] + \ov{\E}[(\xi_t)^2].
$$
Stationarity of the increments of $M_t$ implies that $\ov{\E}[(M_t)^2] = t \sigma^2$. Due to reversibility, (the cadlag modification of) $(\omega(t-s))_{0 \le s \le t}$ has the same law under $\ov{\P}$ as $(\omega(s))_{0 \le s \le t}$. But under this time reversal, $Z_t$ remains unchanged while $\xi_t$ is changed into $-\xi_t$ (it is enough to check that it is true on $\xi^\eps_t$, which is clear). Therefore, we have
$$
\ov{\E}[Z_t \xi_t] = - \ov{\E}[Z_t \xi_t] = 0.
$$
Proposition \ref{propdecay} states that
$$
0 \le \frac{1}{t} \ov{\E}[(\xi_t)^2] \le \frac{C}{\psi_\alpha(t)},
$$
which proves the proposition.
\end{proof}
We write $X_t \in \Z^d$ as $(X_{1,t},\ldots,X_{d,t})$. Corollary \ref{tclcor} is implied by the following result.
\begin{prop}
\label{prop:tclcor}
Let $i \in \{1,\ldots,d\}$. Under assumption $(\mathfrak{A})$ and if $d \ge 7$, there exists $C > 0$ such that
$$
0 \le \frac{1}{t} \ov{\E}\left[(X_{i,t})^2\right] - \ov{\sigma}^2 \le \frac{C}{\psi_{\alpha}(t)} \qquad \text{ with } \alpha = \frac{d}{2}-2.
$$
\end{prop}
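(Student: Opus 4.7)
The plan is to combine the Dynkin decomposition of the coordinate process with the Kipnis-Varadhan decomposition of its drift, and then to handle the cross term by a time-reversal argument of a different flavour from the one used in Proposition~\ref{decayZ2}. Let $f(\omega) = \omega_{0,e_i} - \omega_{0,-e_i}$ (the $i$-th coordinate analog of (\ref{deflocaldrift})). Dynkin's formula applied to the function $x \mapsto x_i$ yields $X_{i,t} = N_t + Z_t$, where $Z_t = \int_0^t f(\omega(s))\, \d s$ and $N_t$ is a martingale under $\ov{\PP}$ with stationary increments. Under assumption~$(\mathfrak{A})$, the function $f$ is bounded, satisfies $\rN(f) < +\infty$, and $\E[f] = 0$ by translation invariance; hence Theorem~\ref{main:d>} applies with $\alpha = d/2-2 > 1$ (recall $d \ge 7$), Theorem~\ref{tcl} supplies condition~(\ref{tcl:equiv3}), and Theorem~\ref{thmkipvar} decomposes $Z_t = M_t + \xi_t$. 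Writing $\tilde{M}_t = N_t + M_t$, we obtain $X_{i,t} = \tilde{M}_t + \xi_t$ with $\tilde{M}_t$ a martingale with stationary increments.

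Expanding the square gives
$$
\frac{1}{t}\ov{\E}\left[(X_{i,t})^2\right] = \ov{\E}\left[(\tilde{M}_1)^2\right] + \frac{2}{t}\ov{\E}\left[\tilde{M}_t\, \xi_t\right] + \frac{1}{t}\ov{\E}\left[(\xi_t)^2\right].
$$
The martingale CLT for $\tilde{M}_t/\sqrt{t}$, combined with $\xi_t/\sqrt{t} \to 0$ in $L^2$ and the invariance principle of~\cite{masi} under~$(\mathfrak{A})$, forces $\ov{\E}[(\tilde{M}_1)^2] = \ov{\sigma}^2$. The last term is nonnegative and bounded above by $C/\psi_\alpha(t)$ by Proposition~\ref{propdecay}. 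Consequently the proposition reduces to establishing the identity
\begin{equation}
\label{eq:plan:cross}
\ov{\E}\left[\tilde{M}_t\, \xi_t\right] = 0, \qquad \text{or equivalently} \qquad \ov{\E}\left[X_{i,t}\, \xi_t\right] = \ov{\E}\left[(\xi_t)^2\right],
\end{equation}
which is the main obstacle.

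In contrast with Proposition~\ref{decayZ2}, the time-reversal trick does not directly force the cross term to vanish: under the measure-preserving map $\Phi:(\omega,(X_s)_{s\le t}) \mapsto (\theta_{X_t}\omega, (X_{t-s} - X_t)_{s \le t})$ both $X_{i,t}$ and $\xi_t$ change sign, so their product is invariant. Instead, I would work at the regularized level with $u_\eps = (-\L+\eps)^{-1} f$ and $\xi_t^\eps = u_\eps(\omega(0)) - u_\eps(\omega(t))$, and apply $\Phi$ separately to each of the two pieces of $X_{i,t}\, \xi_t^\eps$. This yields the identity $\ov{\E}[X_{i,t}\, u_\eps(\omega(0))] = -\ov{\E}[X_{i,t}\, u_\eps(\omega(t))]$, whence $\ov{\E}[X_{i,t}\, \xi_t^\eps] = 2\,\ov{\E}[X_{i,t}\, u_\eps(\omega(0))]$. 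Since $N_t$ is a martingale starting from $0$ in a filtration containing $\sigma(\omega(0))$, only the $Z_t$-part contributes, and Fubini's theorem together with the spectral theorem yield
$$
\ov{\E}[X_{i,t}\, \xi_t^\eps] = 2\,\ov{\E}[Z_t\, u_\eps(\omega(0))] = 2\int \frac{1-e^{-\lambda t}}{\lambda(\lambda + \eps)}\, \d e_f(\lambda).
$$
Sending $\eps \to 0$ --- justified by the $L^2$ convergence $\xi_t^\eps \to \xi_t$ (shown in the proof of Theorem~\ref{thmkipvar}), the $L^2$ boundedness of $X_{i,t}$, and dominated convergence on the right-hand side (using $\int \lambda^{-1}\, \d e_f(\lambda) < +\infty$) --- produces $\ov{\E}[X_{i,t}\, \xi_t] = 2\int (1-e^{-\lambda t})/\lambda^2\, \d e_f(\lambda)$, which coincides with $\ov{\E}[(\xi_t)^2]$ by the explicit formula in Theorem~\ref{thmkipvar}. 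This establishes~(\ref{eq:plan:cross}) and completes the argument.
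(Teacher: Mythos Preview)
Your argument is correct, but the paper finds a shorter path by choosing a different cross term to annihilate. You combine the Dynkin and Kipnis--Varadhan decompositions into $X_{i,t}=\tilde M_t+\xi_t$ and then must show $\ov{\E}[\tilde M_t\,\xi_t]=0$; since both $X_{i,t}$ and $\xi_t$ are odd under time reversal, this indeed forces the regularised spectral computation you carry out. The paper instead stays with the coarser decomposition $X_{i,t}=N_t+Z_t$ and expands $\ov{\E}[(N_t)^2]$: the cross term is now $\ov{\E}[X_{i,t}\,Z_t]$, and because $Z_t=\int_0^t f(\omega(s))\,\d s$ is \emph{even} under time reversal while $X_{i,t}$ is odd, this vanishes in one line. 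Proposition~\ref{decayZ2} then controls $\sigma^2-t^{-1}\ov{\E}[(Z_t)^2]$ directly. The two routes are equivalent in the end --- your $\ov{\E}[(\tilde M_1)^2]$ is exactly the paper's $(\sigma')^2-\sigma^2$ --- but the paper's choice avoids the $\eps$-regularisation, the martingale argument for the $N_t$-piece, and the passage to the limit. One minor point: your identification $\ov{\E}[(\tilde M_1)^2]=\ov{\sigma}^2$ via the invariance principle tacitly uses convergence of second moments (not just convergence in law); the paper sidesteps this by simply writing $\ov{\sigma}^2=(\sigma')^2-\sigma^2$ at the end.
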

\begin{proof}
By symmetry, it is sufficient to prove the result for $i = 1$. Recall from equation~(\ref{deflocaldrift}) that we defined the local drift in the first coordinate as
$$
\mathfrak{d}(\omega) = \omega_{0,e_1} - \omega_{0,-e_1}.
$$
Let 
$$
Z_t = \int_0^t \mathfrak{d}(\omega(s)) \d s.
$$
Then
$$
N_t := X_{1,t} - Z_t
$$
is a martingale with stationary increments under $\ov{\P}$. Under assumption ($\mathfrak{A}$), it is clear that $\rN(\mathfrak{d})$ is finite, as it is bounded and depends only on a finite number of coordinates. Using Theorem~\ref{main:d>}, we obtain that assumption (\ref{tcl:equiv1}) of Theorem \ref{tcl} is satisfied with $\alpha = d/2-2$. We have $\alpha > 1$ whenever $d \ge 7$, and in this case, Proposition~\ref{decayZ2} applies~: 
\begin{equation}
\label{proofcordecay}
0 \le {\sigma}^2 - \frac{1}{t} \ov{\E}\left[(Z_t)^2\right] \le \frac{C}{\psi_{\alpha}(t)},
\end{equation}
where $\sigma^2 = 2 \int \lambda^{-1} \ \d e_\mathfrak{d}(\lambda)$.
We obtain
\begin{equation}
\label{decompM'}
\ov{\E}[(N_t)^2] =  \ov{\E}[(X_{1,t})^2] - 2 \ov{\E}[X_{1,t} Z_t] + \ov{\E}[(Z_t)^2].
\end{equation}
Stationarity of the increments of $N_t$ implies that $\ov{\E}[(N_t)^2] = t (\sigma')^2$ for some $\sigma' \ge 0$. We will now show that $\ov{\E}[X_{1,t} Z_t] = 0$.

Indeed, one can see $X_{1,t}$ as a functional of $(\omega(s))_{0 \le s \le t}$ (this is valid whenever $\omega$ is not periodic, which is true almost surely). But as we saw before, the (cadlag modification of the) time reversal $(\omega(t-s))_{0 \le s \le t}$ has the same law as $(\omega(s))_{0 \le s \le t}$ under $\ov{\P}$. It is clear that this time reversal changes $X_{1,t}$ into $-X_{1,t}$. On the other hand, it leaves $Z_t$ unchanged. Therefore, we obtain
$$
\ov{\E}[X_{1,t} Z_t] = - \ov{\E}[X_{1,t} Z_t] = 0,
$$
which, together with (\ref{proofcordecay}) and (\ref{decompM'}), proves the proposition (with $\ov{\sigma}^2 = (\sigma')^2 - \sigma^2$).
\end{proof}

%
%
%
%%%%%%%%%%%%%%%%%%%%%%%%%%%%%%%%%%%%%%%%%%%%%%%%%%%%%%%%%%%%%%
%%%%%%%%%%%%%%%%%%%       Addendum       %%%%%%%%%%%%%%%%%%%%%
%%%%%%%%%%%%%%%%%%%%%%%%%%%%%%%%%%%%%%%%%%%%%%%%%%%%%%%%%%%%%%
%
%
%
\section{Addendum}
\label{s:addendum}
\setcounter{equation}{0}
A referee made us aware of the recent works \cite{gloria1,gloria2}, in which the authors propose a practical way to estimate the effective diffusion matrix. As we will now see, their results have consequences in terms of the exponent of decay to equilibrium associated with the function $\mathfrak{d}$ defined in (\ref{deflocaldrift}). They consider the case when $d \ge 2$. For bounded, independent and identically distributed conductances, our Theorems~\ref{main:d<} and \ref{main:d>} give an exponent of decay equal to $\max(1,d/2 - 2)$ (with a logarithmic correction for $d = 2$). On the other hand, roughly speaking, their results imply that the exponent of decay is at least $\min(d/2 + 1, 3)$ for this particular function $\mathfrak{d}$, which is a better result when $d \le 9$. This observation enables to strengthen Corollary~\ref{tclcor} when $d \le 8$. Our approach does not provide such good exponents, but has the advantage of covering a large class of functions at once, and also gives some information when the conductances are unbounded (although in this case, Theorem~\ref{main:d>} does not apply to the function $\mathfrak{d}$, as $\rN(\mathfrak{d})$ becomes infinite).

We describe their approach briefly, and refer the reader to \cite{gloria1,gloria2} for details. The authors assume that the conductances are bounded, independent and identically distributed. In this case, we recall that the effective diffusion matrix is $\ov{\sigma}^2$ times the identity matrix. The authors ask for a practical way to compute $\ov{\sigma}^2$ numerically, with a control of the error. The constant $\ov{\sigma}^2$ can be expressed in terms of the ``corrector field''. We recall from (\ref{deflocaldrift}) the definition of the local drift in the first coordinate as
\begin{equation}
\label{deffung}
\mathfrak{d}(\omega) = \omega_{0,e_1} - \omega_{0,-e_1}.
\end{equation}
The corrector is a function $\phi : \Omega \to \R$ such that 
\begin{equation}
\label{Poissonequation}
-\L \phi = \mathfrak{d},
\end{equation}
whose gradient is stationnary and of mean $0$ \cite[Theorem 3]{kun}.

Let us define the following quantities~:
$$
A_0(\phi) = \E[\omega_{0,e_1}] - \cE(\phi,\phi),
$$
$$
A_1(\phi) = \E[\omega_{0,e_1}(1 + \phi(\theta_{e_1} \ \omega) - \phi(\omega))],
$$
\begin{equation}
\label{gloriaotto}
A_2(\phi) = \frac{1}{2} \sum_{|z| = 1} \E[\omega_{0,z}(e_1 \cdot z + \phi(\theta_{z} \ \omega) - \phi(\omega))^2].
\end{equation}
These three expressions are all equal to $\ov{\sigma}^2/2$ (see \cite[Theorem 4.5 (iii)]{masi} and \cite[(3.17), (3.19)]{kun}). 

The problem faced is that the function $\phi$ is not practically computable. An idea is to replace $\phi$ by $R_\mu \mathfrak{d}$, where $R_\mu$ is the resolvent operator defined in (\ref{def:Rmu}), and $\mu > 0$ is a small parameter. In the words of \cite{yuri}, the function $R_\mu \mathfrak{d}$ is an ``almost solution'' of the original Poisson equation (\ref{Poissonequation}). As discussed in the introduction of \cite{gloria1}, the function $R_\mu \mathfrak{d}$ can be computed in practice. 

At this point, one expects $A_0(R_\mu \mathfrak{d})$, $A_1(R_\mu \mathfrak{d})$ and $A_2(R_\mu \mathfrak{d})$ to approach $\ov{\sigma}^2/2$ as $\mu$ tends to $0$. These quantities are however no longer equal, and a computation (following \cite[p. 50]{kun}) shows that
$$
A_0(R_\mu \mathfrak{d}) = A_1(R_\mu \mathfrak{d}) +\mu \E[(R_\mu \mathfrak{d})^2] =  A_2(R_\mu \mathfrak{d}) + 2 \mu \E[(R_\mu \mathfrak{d})^2].
$$
In other words, these approximations are of the form
$$
A_k(R_\mu \mathfrak{d}) = A_0(R_\mu \mathfrak{d}) - k \mu \E[(R_\mu \mathfrak{d})^2],
$$
for some $k \in \R$. It turns out that, among the family of possible approximations $(A_k(R_\mu \mathfrak{d}))_{k \in \R}$, all approximations are asymptotically of the same order, except for $k = 2$, for which the approximation is better. \cite{gloria1, gloria2} have indeed chosen $A_2(R_\mu \mathfrak{d})$ as their approximation, while \cite[Theorem 2.1]{yuri} had chosen $A_1(R_\mu \mathfrak{d})$ (but obtained a non-optimal result anyways). A spectral computation gives that
\begin{eqnarray*}
A_k(R_\mu \mathfrak{d}) - \frac{\ov{\sigma}^2}{2}  & = & \cE(\phi,\phi) - \cE(R_\mu \mathfrak{d},R_\mu \mathfrak{d}) - k\mu \E[(R_\mu \mathfrak{d})^2] \\
& = & \int \left(\frac{1}{\lambda} - \frac{\lambda}{(\lambda+\mu)^2} - \frac{k \mu}{(\lambda+\mu)^2} \right) \ \d e_\mathfrak{d}(\lambda) \\
& = & \int \frac{\mu^2 + (2-k) \lambda \mu}{\lambda(\lambda + \mu)^2} \ \d e_\mathfrak{d}(\lambda).
\end{eqnarray*}
For any $f \in L^2(\P)$ satisfying $\int \lambda^{-1} \ \d e_f(\lambda) < +\infty$, we thus define
$$
I_{k,\mu}(f) = \int \frac{\mu^2 + (2-k) \lambda \mu}{\lambda(\lambda + \mu)^2} \ \d e_f(\lambda).
$$ 
Its behaviour as $\mu$ tends to $0$ can be described the following way.
\begin{prop}
\label{erreur2}
Let $f \in L^2(\P)$ and $\alpha > 1$. Under one of the equivalent conditions (\ref{tcl:equiv1}), (\ref{tcl:equiv3}) of Theorem~\ref{tcl}, there exists $C > 0$ such that for any $\mu > 0$~:
$$
0 \le I_{2,\mu}(f) \le 
\left|
\begin{array}{ll}
C \mu^{\alpha - 1} & \text{if }  \alpha < 3,\\
C \mu^2\ln(\mu^{-1}) & \text{if } \alpha = 3 ,\\
C \mu^2 & \text{otherwise.}  
\end{array}
\right.
$$
Reciprocally~:
\begin{equation}
\label{ineqev}
\int_0^\mu \frac{1}{\lambda} \ \d e_f(\lambda) \le 4 I_{2,\mu}(f),
\end{equation}
and, if $f \neq 0$, then there exists $C > 0$ such that, for any $\mu$ small enough,
\begin{equation}
\label{mu2}
C \mu^2 \le I_{2,\mu}(f).
\end{equation}
On the other hand, if $k \in \R \setminus \{2\}$, $f \neq 0$ and $\alpha > 2$, then there exists $C_1,C_2 > 0$ such that, for any $\mu$ small enough,
$$
C_1 \mu \le \left| I_{k,\mu}(f) \right| \le C_2 \mu.
$$
\end{prop}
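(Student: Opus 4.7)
The plan is to organize everything around the nondecreasing function $F(\delta) := \int_{(0,\delta]} \lambda^{-1} \ \d e_f(\lambda)$; under condition (\ref{tcl:equiv3}) of Theorem~\ref{tcl} one has $F(\delta) \le C \delta^{\alpha - 1}$ for every $\delta > 0$, and since $\alpha > 1$ the limit $F(\infty) = \int \lambda^{-1} \ \d e_f(\lambda)$ is finite. All four bounds will reduce to manipulations of this single auxiliary function.

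For the upper bound on $I_{2,\mu}(f) = \int \mu^2 \lambda^{-1} (\lambda+\mu)^{-2} \ \d e_f(\lambda)$, I would split the integration at $\lambda = \mu$. On $(0,\mu]$, using $(\lambda+\mu)^2 \ge \mu^2$ bounds the integrand by $\lambda^{-1}$, so this piece is at most $F(\mu) \le C \mu^{\alpha - 1}$. On $(\mu, \infty)$, using $(\lambda+\mu)^2 \ge \lambda^2$ bounds the integrand by $\mu^2 \lambda^{-3}$; writing $\lambda^{-2} = 2 \int_\lambda^\infty \delta^{-3} \ \d\delta$ and applying Fubini yields
$$
\int_\mu^\infty \frac{\mu^2}{\lambda^3} \ \d e_f(\lambda) \le 2\mu^2 \int_\mu^\infty \frac{F(\delta)}{\delta^3} \ \d\delta.
$$
I then split this outer integral at $\delta = 1$: on $[\mu,1]$ the bound $F(\delta) \le C \delta^{\alpha - 1}$ reduces the problem to controlling $\int_\mu^1 \delta^{\alpha - 4} \ \d\delta$, while on $[1,\infty)$ the bound $F \le F(\infty)$ contributes at most a constant. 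Distinguishing whether $\delta^{\alpha - 4}$ is integrable near $0$ (i.e.\ $\alpha > 3$), borderline ($\alpha = 3$), or not ($\alpha < 3$) produces the three announced regimes $C\mu^2$, $C\mu^2 \ln(\mu^{-1})$, and $C\mu^{\alpha - 1}$ respectively (for $\mu \le 1$; the range $\mu > 1$ is covered trivially by the global bound $I_{2,\mu}(f) \le F(\infty)$).

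The reciprocal inequality (\ref{ineqev}) is immediate: on $(0,\mu]$ we have $(\lambda+\mu)^2 \le 4\mu^2$, so
$$
I_{2,\mu}(f) \ge \int_{(0,\mu]} \frac{\mu^2}{4\mu^2 \lambda} \ \d e_f(\lambda) = \frac{1}{4} F(\mu).
$$
For (\ref{mu2}), note that the finiteness of $\int \lambda^{-1} \ \d e_f(\lambda)$ forces $e_f(\{0\}) = 0$, so if $f \ne 0$ there exists $\Lambda > 0$ with $e_f((0,\Lambda]) > 0$; restricting the integral to $(0,\Lambda]$ and bounding $(\lambda+\mu)^2 \le 4\Lambda^2$ and $\lambda \le \Lambda$ yields $I_{2,\mu}(f) \ge c \mu^2$ for all $\mu \le \Lambda$, with $c = e_f((0,\Lambda])/(4\Lambda^3)$.

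Finally, for $k \neq 2$ I decompose $I_{k,\mu}(f) = I_{2,\mu}(f) + (2-k)\mu J_\mu(f)$ with $J_\mu(f) := \int (\lambda+\mu)^{-2} \ \d e_f(\lambda)$. Since $\alpha > 2$, the upper bound already established gives $I_{2,\mu}(f) = o(\mu)$. By monotone convergence $J_\mu(f) \uparrow J_0(f) := \int \lambda^{-2} \ \d e_f(\lambda)$, and a Fubini-plus-split-at-$1$ argument parallel to the one above (now with exponent $\delta^{\alpha - 3}$, which is integrable near $0$ precisely when $\alpha > 2$) shows $J_0(f) < \infty$; it is strictly positive because $f \neq 0$ and $e_f(\{0\}) = 0$. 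Hence $I_{k,\mu}(f) \sim (2-k) J_0(f) \mu$ as $\mu \to 0$, which provides both $C_1 \mu \le |I_{k,\mu}(f)| \le C_2 \mu$ for sufficiently small $\mu$. The only genuinely delicate step is the case analysis in the second paragraph, where the three regimes in $\alpha$ emerge from the behaviour of the $\delta^{\alpha - 4}$ integral; once $F$ has been brought into play through Fubini, the remaining estimates are elementary.
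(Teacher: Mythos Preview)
Your proof is correct and follows the same overall strategy as the paper (exploit condition~(\ref{tcl:equiv3}) through the auxiliary function $F(\delta)$ and a Fubini step). For the upper bound on $I_{2,\mu}(f)$, the paper uses the exact identity $\mu^2(\lambda+\mu)^{-2} = 2\int_\lambda^\infty \mu^2(\mu+\delta)^{-3}\,\d\delta$ and applies Fubini directly, avoiding your preliminary split at $\lambda=\mu$; after a change of variable the case analysis in $\alpha$ is then read off from $\int_0^{1/\mu} u^{\alpha-1}(1+u)^{-3}\,\d u$. Your variant (split at $\mu$, bound crudely by $\mu^2\lambda^{-3}$ on the tail) is slightly less sharp in the constants but equally valid and arguably more transparent.

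For the case $k\neq 2$, your algebraic decomposition $I_{k,\mu}(f) = I_{2,\mu}(f) + (2-k)\mu J_\mu(f)$ with $J_\mu(f)=\int(\lambda+\mu)^{-2}\,\d e_f(\lambda)$ is cleaner than the paper's argument, which splits the integral at $\lambda = \mu/(k-2)$ and tracks the sign of the integrand on each piece. Your observation that $J_\mu(f)\uparrow J_0(f)\in(0,\infty)$ under $\alpha>2$ yields both inequalities at once and is self-contained, whereas the paper postpones the upper bound $\mu\,\E[(R_\mu f)^2]\le C\mu$ to the proof of Proposition~\ref{secondmoment}.
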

\begin{proof}
The proof is similar to the proof of Theorem~\ref{tcl}. We decompose $I_{2,\mu}(f)$ as
$$
2 \int_{\lambda = 0}^{+\infty} \int_{\delta = \lambda}^{+\infty} \frac{\mu^2}{(\mu+\delta)^3} \ \d \delta \ \frac{1}{\lambda} \ \d e_f(\lambda). 
$$
By Fubini's theorem, this integral can be rewritten as
$$
2 \int_{\delta = 0}^{+\infty} \frac{\mu^2}{(\mu+\delta)^3} \int_{\lambda = 0}^\delta \frac{1}{\lambda} \ \d e_f(\lambda) \ \d \delta.
$$
Under assumption (\ref{tcl:equiv3}) of Theorem~\ref{tcl}, this integral is bounded by a constant times 
$$
\int_{\delta = 0}^{+\infty} \frac{\mu^2}{(\mu+\delta)^3} \min(\delta^{\alpha - 1},1) \ \d \delta.
$$
The integral obtained when $\delta$ ranges in $[1,+\infty)$ can be computed explicitely, and is smaller than $\mu^2$. For the remaining part, a change of variable leads to
$$
\mu^{\alpha - 1} \int_0^{1/\mu} \frac{1}{(1+u)^3} u^{\alpha - 1} \ \d u,
$$
from which the first part of the proposition follows. Inequality (\ref{ineqev}) is clear if one observes that
$$
\frac{\mu^2}{(\lambda + \mu)^2} \ge \frac{1}{4} \ \1_{[0,\mu]}(\lambda).
$$
For the next claim, let $\delta > 0$ be such that 
\begin{equation}
\label{fneq0}
\int_{0}^{\delta} \d e_f(\lambda) > 0. 
\end{equation}
Then 
$$
I_{2,\mu}(f) \ge \frac{\mu^2}{\delta (\delta+\mu)^2} \int_0^{\delta} \d e_f(\lambda),
$$
which shows (\ref{mu2}). For the last part, let us assume that $k > 2$. One can decompose $I_{k,\mu}(f)$ as
\begin{equation}
\label{decompIkmu}
\left(\int_{0}^{\mu/(k-2)} + \int_{\mu/(k-2)}^{+\infty} \right) \frac{\mu^2 - (k-2) \lambda \mu}{\lambda (\lambda + \mu)^2} \ \d e_f(\lambda).
\end{equation}
In this expression, the integrand is positive on the first interval of integration, and negative on the second one. The first integral is thus positive, and bounded by
$$
\int_{0}^{\mu/(k-2)} \frac{\mu^2}{\lambda (\lambda + \mu)^2} \ \d e_f(\lambda) \le \int_{0}^{\mu/(k-2)} \frac{1}{\lambda } \ \d e_f(\lambda)  \le C \mu^{\alpha - 1},
$$
which is negigible compared to $\mu$ when $\alpha > 2$. The second integral obtained from (\ref{decompIkmu}) can be separated into 
$$
\int_{\mu/(k-2)}^{+\infty} \frac{\mu^2}{\lambda (\lambda + \mu)^2} \ \d e_f(\lambda) - (k-2)\int_{\mu/(k-2)}^{+\infty} \frac{\mu}{(\lambda + \mu)^2} \ \d e_f(\lambda).
$$
The first integral is positive, and bounded by $I_{2,\mu}(f)$, which is negligible compared to $\mu$ when $\alpha > 2$. Let us define $\delta > 0$ such that (\ref{fneq0}) holds. Then
$$
\int_{\mu/(k-2)}^{+\infty} \frac{\mu}{(\lambda + \mu)^2} \ \d e_f(\lambda) \ge \frac{\mu}{(\delta + \mu)^2} \int_{\mu/(k-2)}^{\delta}  \d e_f(\lambda),
$$
which is larger than $C \mu$ for any small enough $\mu$. The proof is similar for $k < 2$ (only simpler). 

Reciprocally, we need to show that, for any $\alpha > 2$, $|I_{k,\mu}(f)| \le C \mu$. Due to the previous observations, it is in fact sufficient to bound 
$$
\int_0^{+\infty} \frac{\mu}{(\lambda + \mu)^2} \ \d e_f(\lambda) = \mu \E[(R_\mu f)^2]. 
$$
We postpone the proof of this last fact to the proof of Proposition~\ref{secondmoment}.
\end{proof}

\cite{gloria2} obtain the following result.
\begin{thm}[\cite{gloria2}]
\label{thm:gloria}
For $\mathfrak{d}$ defined in (\ref{deffung}), and for some $c > 0$, one has~: 
$$
I_{2,\mu}(\mathfrak{d}) \le 
\left|
\begin{array}{ll}
C \mu \ln(\mu^{-1})^c & \text{if } d = 2, \\
C \mu^{3/2} & \text{if } d = 3, \\
C \mu^2 \ln(\mu^{-1}) & \text{if } d = 4, \\
C \mu^2 & \text{if } d \ge 5. \\
\end{array}
\right.
$$
\end{thm}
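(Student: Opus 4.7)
The theorem is a result of \cite{gloria2} whose full proof goes well beyond the methods developed in the present paper; my plan is to outline the strategy and to identify the main technical hurdle. The starting point is the identity $I_{2,\mu}(\mathfrak{d}) = A_2(\phi_\mu) - \ov{\sigma}^2/2$, where $\phi_\mu := R_\mu \mathfrak{d}$ is the regularized corrector. Since $A_2(\phi) = \ov{\sigma}^2/2$ for the genuine corrector $\phi$ (whose gradient is stationary of mean zero), the task is to quantify the $L^2$-convergence of $\grad \phi_\mu$ to $\grad \phi$ as $\mu \to 0$.

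First I would exploit the iid assumption via the Efron--Stein (or spectral gap) inequality, which bounds the variance of any observable of $\omega$ by a sum of resampling variances:
$$
\var(F) \le \sum_{e \in \mathbb{B}^d} \E[\var_e(F)],
$$
where $\var_e$ denotes the variance of $F$ with respect to an independent resampling of the single conductance $\omega_e$. Applied to suitable functionals of $\phi_\mu$, this reduces matters to the pointwise sensitivity of $\phi_\mu$ to each individual conductance, which can be read off from the equation $(-\L + \mu) \phi_\mu = \mathfrak{d}$ and is controlled by the gradient of the Green's function of $-\L + \mu$. The key analytical ingredient is that this gradient decays like $|x|^{-(d-1)}$ with an additional exponential factor of range $\mu^{-1/2}$, so that summation over edges and a standard dimensional integration produce the rates stated in the theorem, with the saturation at $\mu^2$ for $d \ge 5$ reflecting both the prefactor $\mu^2$ in the definition of $I_{2,\mu}$ and the effective support of the Green's function.

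The main obstacle, and what distinguishes \cite{gloria2} from simpler variance-decay arguments (such as the telescoping trick that leads to exponent $d/2 + 1$ described at the end of Section~\ref{s:concl}), is a Meyers-type self-improvement of integrability for $\grad \phi_\mu$: one must show that $\E[|\grad \phi_\mu|^{2+\eta}]$ remains bounded as $\mu \to 0$ for some $\eta > 0$. This is the ingredient that allows H\"older's inequality to combine the stationary $L^2$-bound on $\grad \phi_\mu$ with the pointwise Green's function estimate, and without it the dimensional exponent would be strictly worse. The proof of such Meyers estimates is a classical but delicate real-variable argument relying crucially on the boundedness of the conductances assumed in $(\mathfrak{A})$, and lies entirely outside the toolkit of the present paper.
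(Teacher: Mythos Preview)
The paper does not prove this theorem at all: it is stated as a result of \cite{gloria2} and simply cited, with no argument given. Your proposal correctly recognizes this and appropriately declines to reproduce the full proof; in that sense you and the paper agree.

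Your outline of the Gloria--Otto strategy is accurate in its essentials: the spectral-gap (Efron--Stein) inequality with respect to independent resampling of conductances, sensitivity estimates via the gradient of the massive Green's function, and the Meyers-type higher integrability of $\nabla \phi_\mu$ are indeed the three pillars of \cite{gloria1,gloria2}. One small correction: the telescoping observation $\mathfrak{d} = f(\theta_{e_1}\,\omega) - f(\omega)$ that yields exponent $d/2+1$ for the simple random walk appears at the end of Section~\ref{s:addendum}, not Section~\ref{s:concl}. Otherwise your summary is a fair account of why the result lies outside the scope of the present paper.
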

They also argue that, except possibly for the logarithmic term in dimension $2$, these bounds cannot be improved \cite[Appendix]{gloria1}. This result has the following consequences.

\begin{cor}
\label{cor:gloria}
For any $\eps > 0$, there exists $C > 0$ such that, for any $t > 0$~:
\begin{equation}
\label{cor:eq1}
\E[(\mathfrak{d}_t)^2] \le 
\left|
\begin{array}{ll}
C t^{-2+\eps} & \text{if } d = 2, \\
C t^{-5/2} & \text{if } d = 3, \\
C t^{-3+\eps} & \text{if } d = 4, \\
C t^{-3} & \text{if } d \ge 5, \\
\end{array}
\right.
\end{equation}
and moreover,
\begin{equation}
\label{cor:eq2}
0 \le t^{-1} \ov{\E}\left[(\| X_t \|_2)^2\right] - d \ov{\sigma}^2 \le 
\left|
\begin{array}{ll}
C t^{-1+\eps} & \text{if } d = 2, \\
C t^{-1} & \text{if } d \ge 3.
\end{array}
\right.
\end{equation}
\end{cor}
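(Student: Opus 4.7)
The plan is to combine Theorem~\ref{thm:gloria} with the spectral characterization of Theorem~\ref{tcl} and the martingale decomposition already employed in Proposition~\ref{prop:tclcor}.

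For (\ref{cor:eq1}), I would begin from the reverse inequality (\ref{ineqev}) of Proposition~\ref{erreur2}, namely $\int_{[0,\mu]} \lambda^{-1}\, \d e_\mathfrak{d}(\lambda) \le 4\, I_{2,\mu}(\mathfrak{d})$. Feeding in the bounds of Theorem~\ref{thm:gloria} and absorbing any logarithmic factor into a polynomial loss $\mu^{-\eps}$, I obtain hypothesis (\ref{tcl:equiv3}) of Theorem~\ref{tcl} with $\alpha = 2-\eps$ if $d=2$, $\alpha = 5/2$ if $d=3$, $\alpha = 3-\eps$ if $d=4$, and $\alpha = 3$ if $d \ge 5$. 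In each case $\alpha > 1$, so the implication (\ref{tcl:equiv3})$\Rightarrow$(\ref{tcl:equiv1}) of Theorem~\ref{tcl} delivers exactly the variance bounds claimed in (\ref{cor:eq1}).

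For (\ref{cor:eq2}), by symmetry it is enough to handle $\ov{\E}[(X_{1,t})^2]$ and then multiply by $d$. Following Proposition~\ref{prop:tclcor}, I would set $Z_t = \int_0^t \mathfrak{d}(\omega(s))\, \d s$ and $N_t = X_{1,t} - Z_t$, so that $N_t$ is a martingale with stationary increments under $\ov{\P}$ and $\ov{\E}[(N_t)^2] = t (\sigma')^2$ for some $\sigma' \ge 0$; the time-reversal argument used in that proof kills the cross term $\ov{\E}[X_{1,t} Z_t]$. Since the exponents $\alpha$ produced in part~(\ref{cor:eq1}) all satisfy $\alpha > 1$, Proposition~\ref{decayZ2} yields $0 \le \sigma^2 - t^{-1} \ov{\E}[(Z_t)^2] \le C/\psi_\alpha(t)$, where $\sigma^2 = 2 \int \lambda^{-1}\, \d e_\mathfrak{d}(\lambda)$. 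In dimension $2$, $\alpha = 2-\eps \in (1,2)$ gives $\psi_\alpha(t) = t^{1-\eps}$; in dimensions $d \ge 3$, $\alpha > 2$ gives $\psi_\alpha(t) = t$. Summing over coordinates and identifying $\ov{\sigma}^2 = (\sigma')^2 - \sigma^2$ produces (\ref{cor:eq2}).

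The only mild obstacle is the handling of the logarithmic factors appearing in dimensions $2$ and $4$ of Theorem~\ref{thm:gloria}: they do not survive the chain of equivalences of Theorem~\ref{tcl}, so one must trade each logarithm for an arbitrarily small polynomial loss $\eps > 0$, which is precisely what shows up in the statement. Apart from this cosmetic point, the argument is a direct concatenation of results already available in the paper.
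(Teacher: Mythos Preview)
Your proposal is correct and follows essentially the same route as the paper: the first claim is obtained from Theorem~\ref{thm:gloria} via inequality~(\ref{ineqev}) and Theorem~\ref{tcl}, and the second is deduced by rerunning the argument of Proposition~\ref{prop:tclcor} with the improved exponents. Your explicit remark about trading the logarithms in dimensions $2$ and $4$ for an $\eps$-loss is a useful clarification that the paper leaves implicit.
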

\begin{proof}
The first claim is obtained from Theorem~\ref{thm:gloria} using inequality (\ref{ineqev}) and Theorem~\ref{tcl}. The second one is a consequence of the first, obtained exactly the same way as we proved Proposition~\ref{prop:tclcor}. 
\end{proof}
\noindent \textbf{Remark.} The result in (\ref{cor:eq1}) improves on our Theorems~\ref{main:d<} and \ref{main:d>} when $d \le 9$ (the bound coincides with the one given by Theorem~\ref{main:d>} when $d = 10$, and is weaker for larger dimensions), while (\ref{cor:eq2}) strengthens our Corollary~\ref{tclcor} when $d \le 8$ (and is equivalent otherwise). 

In terms of practical computations, the expectation that one needs to compute in the formula (\ref{gloriaotto}) is still problematic. \cite{gloria1} propose to replace this expectation by a spatial average, evaluated on a single realisation of the environment. This new approximation has expectation $A_2(R_\mu \mathfrak{d})$, but also has random fluctuations. 

The main purpose of \cite{gloria1} is to estimate the $L^2$ norm of these fluctuations. In order to do so, they show \cite[Proposition 1]{gloria1} that, for $d \ge 3$ and for any $q > 0$, there exists a constant $C$ such that 
\begin{equation}
\label{momentsgloria}
\sup_{\mu > 0} \E\left[ |R_\mu \mathfrak{d}|^q \right] \le C,
\end{equation}
and in dimension $2$, that $\E\left[ |R_\mu \mathfrak{d}|^q \right]$ is bounded by some power of $\ln(\mu^{-1})$. The result concerning the case $q = 2$ can easily be linked with the behaviour of the spectral measure. 
\begin{prop}
\label{secondmoment}
Let $f \in L^2(\P)$ and $\alpha > 1$. Under one of the equivalent conditions (\ref{tcl:equiv1}), (\ref{tcl:equiv3}) of Theorem~\ref{tcl}, there exists $C > 0$ such that for any $\mu > 0$~:
$$
\E\left[(R_\mu f)^2\right] \le 
\left|
\begin{array}{ll}
C \mu^{\alpha - 2} & \text{if }  \alpha < 2,\\
C \ln(\mu^{-1}) & \text{if } \alpha = 2 ,\\
C & \text{otherwise,}  
\end{array}
\right.
$$
and reciprocally~:
$$
\int_0^\mu \frac{1}{\lambda} \ \d e_f(\lambda) \le 8 \mu \ \E\left[(R_\mu f)^2\right].
$$
\end{prop}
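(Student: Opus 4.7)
The plan rests on the spectral representation
$$\E[(R_\mu f)^2] = \int_0^{+\infty} \frac{1}{(\lambda+\mu)^2} \, \d e_f(\lambda),$$
which reduces both estimates to elementary calculus once the weight $\d e_f$ is understood through the quantity $H(\delta) := \int_0^\delta \lambda^{-1}\,\d e_f(\lambda)$ that hypothesis (\ref{tcl:equiv3}) controls via $H(\delta) \le C\delta^{\alpha-1}$.

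For the upper bound in the first part I would split at $\lambda = \mu$. On $[0,\mu]$ the integrand is at most $\mu^{-2}$, so this piece contributes at most $\mu^{-2} e_f([0,\mu])$; since $e_f([0,\mu]) \le \mu \int_0^\mu \lambda^{-1}\,\d e_f(\lambda) \le C\mu^\alpha$ (insert $\lambda \le \mu$ and apply (\ref{tcl:equiv3})), this part is $\le C\mu^{\alpha-2}$. On $(\mu,+\infty)$ the integrand is at most $\lambda^{-2}$, and the identity $\lambda^{-1}\,\d e_f(\lambda) = \d H(\lambda)$ together with integration by parts gives
$$\int_\mu^{+\infty} \frac{1}{\lambda^2}\,\d e_f(\lambda) = -\frac{H(\mu)}{\mu} + \int_\mu^{+\infty} \frac{H(\lambda)}{\lambda^2}\,\d\lambda,$$
with no boundary contribution at infinity since $H(\infty) < +\infty$ (the part of $H$ near $0$ is controlled by (\ref{tcl:equiv3}) and the part away from $0$ by $\|f\|_2^2$). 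Inserting $H(\lambda) \le C\min(\lambda^{\alpha-1}, H(\infty))$ and splitting the remaining integral at $\lambda = 1$ produces the three regimes $C\mu^{\alpha-2}$, $C\ln(\mu^{-1})$, $C$ according as $\alpha < 2$, $= 2$, or $> 2$. For large $\mu$ the crude bound $\E[(R_\mu f)^2] \le \mu^{-2}\|f\|_2^2$ absorbs everything into a universal constant.

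For the reciprocal, the pointwise observation $(\lambda+\mu)^{-2} \ge (2\mu)^{-2}$ on $[0,\mu]$ gives immediately $e_f([0,\mu]) \le 4\mu^2 \,\E[(R_\mu f)^2]$. Combining this with the Fubini identity
$$\int_0^\mu \frac{1}{\lambda}\,\d e_f(\lambda) = \frac{e_f([0,\mu])}{\mu} + \int_0^\mu \frac{e_f([0,\delta])}{\delta^2}\,\d\delta,$$
derived from $\lambda^{-1} = \mu^{-1} + \int_\lambda^\mu \delta^{-2}\,\d\delta$ for $\lambda \le \mu$, and then applying the same pointwise bound with $\delta$ in place of $\mu$, reduces matters to comparing $\int_0^\mu \E[(R_\delta f)^2]\,\d\delta$ against $\mu\,\E[(R_\mu f)^2]$.

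The main obstacle is precisely this last comparison: $\delta \mapsto \E[(R_\delta f)^2]$ is monotone decreasing in $\delta$, so the naive estimate $\int_0^\mu \E[(R_\delta f)^2]\,\d\delta \ge \mu\,\E[(R_\mu f)^2]$ runs in the wrong direction. To recover the clean constant announced in the statement I would therefore re-express $\int_0^\mu \E[(R_\delta f)^2]\,\d\delta$ via Fubini as a double spectral integral, $\int\int_0^\mu (\lambda+\delta)^{-2}\,\d\delta\,\d e_f(\lambda) = \int \lambda^{-1}\mu(\lambda+\mu)^{-1}\,\d e_f(\lambda)$, and then bound the resulting kernel pointwise against $\mu(\lambda+\mu)^{-2}$ on the relevant range of $\lambda$, so that the whole right-hand side collapses back onto $\mu\,\E[(R_\mu f)^2]$ at the cost of a numerical factor no larger than $8$.
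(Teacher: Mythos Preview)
Your treatment of the upper bound is correct and essentially parallel to the paper's; the paper rewrites $\frac{\lambda}{(\lambda+\mu)^2}=\int_\lambda^{+\infty}\frac{\delta-\mu}{(\delta+\mu)^3}\,\d\delta$ and swaps the order of integration rather than splitting at $\lambda=\mu$ and integrating by parts, but the case analysis that emerges is identical.

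For the reciprocal, however, your proposed rescue does not go through. After Fubini you have $\int_0^\mu \E[(R_\delta f)^2]\,\d\delta=\int \frac{\mu}{\lambda(\lambda+\mu)}\,\d e_f(\lambda)$, and you then want to dominate the kernel $\frac{\mu}{\lambda(\lambda+\mu)}$ pointwise by a fixed multiple of $\frac{\mu}{(\lambda+\mu)^2}$. That would require $\lambda+\mu\le C\lambda$, which fails for all $\lambda<\mu/(C-1)$; the ratio $\frac{\lambda+\mu}{\lambda}$ blows up as $\lambda\to 0$, so no such comparison is available on the relevant range and the ``wrong-direction'' obstacle you correctly identified is not circumvented. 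In fact the reciprocal inequality with a \emph{universal} constant $8$ is false: for $\d e_f(\lambda)=\lambda^\beta\1_{[0,1]}(\lambda)\,\d\lambda$ with $\beta>0$ small one finds, as $\mu\to 0$, that $\int_0^\mu \lambda^{-1}\,\d e_f=\mu^\beta/\beta$ while $\mu\,\E[(R_\mu f)^2]\sim \mu^\beta\cdot\pi\beta/\sin(\pi\beta)\to\mu^\beta$, so the ratio of the two sides is of order $1/\beta$. (The paper's own argument at this step makes the same slip: it asserts that the ``second term'' is bounded by $4\mu\,\E[(R_\mu f)^2]$, which unwinds to exactly the inequality $\int_0^\mu \E[(R_\delta f)^2]\,\d\delta\le \mu\,\E[(R_\mu f)^2]$ that you flagged as going the wrong way.) What your intermediate steps do legitimately give is
\[
\int_0^\mu \frac{1}{\lambda}\,\d e_f(\lambda)\ \le\ 4\mu\,\E[(R_\mu f)^2]+4\int_0^\mu \E[(R_\delta f)^2]\,\d\delta,
\]
from which any power-law bound $\E[(R_\mu f)^2]\le C\mu^{\alpha-2}$ (valid for all $\mu>0$) transfers to condition~(\ref{tcl:equiv3}), at the cost of a constant depending on $\alpha$; this is the correct form of the converse.
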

\begin{proof}
The spectral representation gives us that
\begin{equation}
\label{rmufspec}
\E\left[(R_\mu f)^2\right] = \int \frac{1}{(\lambda + \mu)^2} \ \d e_f(\lambda). 
\end{equation}
We may rewrite this integral as
$$
\int \frac{\lambda}{(\lambda + \mu)^2} \ \frac{1}{\lambda} \ \d e_f(\lambda) = \int_{\lambda = 0}^{+\infty} \int_{\delta = \lambda}^{+\infty} \frac{\delta - \mu}{(\delta + \mu)^3} \ \d \delta \ \frac{1}{\lambda} \ \d e_f(\lambda).
$$
Using Fubini's theorem, and under condition (\ref{tcl:equiv3}) of Theorem~\ref{tcl}, one obtains that this integral is bounded by a constant times
$$
\int_0^{+ \infty} \frac{\delta - \mu}{(\delta + \mu)^3} \min(\delta^{\alpha - 1},1)  \ \d \delta.
$$
The integral over the interval $[1,+\infty)$ is bounded by
$$
\int_1^{+ \infty} \frac{\delta - \mu}{(\delta + \mu)^3} \ \d \delta = \frac{1}{(1+\mu)^2} \le 1,
$$
One is thus left with the study of 
$$
\int_0^{1} \frac{\delta - \mu}{(\delta + \mu)^3} \delta^{\alpha - 1} \ \d \delta,
$$
which, by a change of variable, becomes
$$
\mu^{\alpha - 2} \int_0^{1/\mu} \frac{u-1}{(u+1)^3} u^{\alpha - 1} \ \d u.
$$
The first part of the proposition then follows. Reciprocally, one can see from (\ref{rmufspec}) that~:
\begin{equation}
\label{comp0}
\int_0^\mu \d e_f(\lambda) \le 4 \mu^2 \E\left[(R_\mu f)^2\right].
\end{equation}
We then note that
$$
\int_0^\mu \frac{1}{\lambda} \ \d e_f(\lambda) = \int_{\lambda = 0}^\mu \int_{\delta = \lambda}^{+\infty} \frac{1}{\delta^2} \ \d e_f(\lambda).
$$
We first bound the part of this double integral for which $\delta$ ranges in $[\mu,+\infty)$~:
$$
\int_{\lambda = 0}^\mu \int_{\delta = \mu}^{+\infty} \frac{1}{\delta^2} \ \d e_f(\lambda) =  \frac{1}{\mu} \int_{\lambda = 0}^\mu \d e_f(\lambda).
$$
This term is, by (\ref{comp0}), bounded by $4 \mu \E[(R_\mu f)^2]$. Using Fubini's theorem, the remaining part of the double integral is equal to
$$
\int_{\delta = 0}^{\mu} \frac{1}{\delta^2} \int_{\lambda = 0}^\delta \d e_f(\lambda) \ \d \delta.
$$
Using the inequality (\ref{comp0}) once again, we obtain that this second term is also bounded by $4 \mu \E[(R_\mu f)^2]$, which finishes the proof of the proposition. 
\end{proof}

Considering Corollary~\ref{cor:gloria}, it seems reasonable to expect the exponent of decay associated with the function $\mathfrak{d}$ to be equal to $d/2 + 1$ in any dimension. A simple argument enables to prove that if one considers the environment seen by the simple random walk, then it is indeed the case. One may indeed write $\mathfrak{d}$ as
$$
\mathfrak{d} = f(\theta_{e_1} \ \omega) - f(\omega),
$$
where $f(\omega) = \omega_{0,-e_1} - \E[\omega_{0,-e_1}]$. For the process of the environment seen by the simple random walk, as we have seen in the proof of Proposition~\ref{contract}, the semi-group and the space translations commute, and thus
\begin{eqnarray*}
\E[(\mathfrak{d}_t^\circ)^2] & = & \E\left[\left(f_t^\circ(\theta_{e_1} \ \omega) - f_t^\circ(\omega) \right)^2 \right] \\
& \le & \cE^\circ(f_t^\circ,f_t^\circ) = \int \lambda e^{-2 \lambda t} \ \d e_f^\circ(\lambda),
\end{eqnarray*}
where $e_f^\circ$ is the spectral measure of $- \L^\circ$ projected on the function $f$. With this representation, and knowing that the exponent of decay to equilibrium of $f$ is $d/2$ (a fact which follows from Theorem~\ref{main:indep}, assuming that the conductances are square integrable), one obtains that the exponent of decay to equilibrium of $\mathfrak{d}$ is $d/2 + 1$ by following the proof of Theorem~\ref{tcl}.

%Compared with (\ref{momentsgloria}), this proposition concerns only the second moment of $R_\mu f$, and would imply the boundedness of $\sup_{\mu > 0} \E\left[ |\phi_\mu|^q \right]$ through Theorem~\ref{main:d>} only for $d \ge 9$ (instead of $d \ge 3$). On the other hand, the result is not limited to the function $g$ of (\ref{deffung}), and due to Theorem~\ref{main:d>}, it gives information on certain functions even when the conductances are not bounded from above (although in this case, the function $g$ is not covered).$0$, in the way described by the following result.

%As a consequence, the problems of bounding the $L^2$ norm of the resolvents as in Proposition~\ref{secondmoment}, to control $I_\mu(f)$ as in Proposition~\ref{erreur2}, or to gain information on the spectral measure of $f$ in a neighbourhood of $0$, are all linked together and can be seen as consequences of the knowledge of the asymptotic behaviour of the variance of $f_t$. As an example, \cite[Proposition~1]{gloria1} guarantees that, when $d \ge 3$, the $L^2$ norm of $\phi_\mu$ is bounded. Using Propositions~\ref{secondmoment} and \ref{erreur2}, we can deduce from this fact that $I_\mu(g) \le C \mu$. Using Theorem~\ref{main:d>} instead, we also obtain that $I_\mu(g)$ is bounded by $C \mu^{3/2}$ if $d = 9$, by $C \mu^2 \ln(\mu^{-1})$ if $d = 10$, and by $C \mu^2$ if $d \ge 11$. Results from \cite{gloria1} suggest that the proper exponent $\alpha$ associated with the functional $g$ should be $d/2 + 1$, instead of the $d/2 - 2$ given by Theorem~\ref{main:d>}.

\noindent \textbf{Acknowledgments.} I would like to thank my Ph.D. advisors, Pierre Mathieu and Alejandro Ram\'irez, for many insightful discussions about this work as well as detailed comments on earlier drafts. I also thank an anonymous referee for suggesting possible links between the works of \cite{yuri,gloria1,gloria2} and the present paper.

\end{document}